\newcommand{\R}{{\mathbb R}}
\newcommand{\N}{{\mathbb N}}
\newcommand{\C}{{\mathbb C}}
\newcommand{\Z}{{\mathbb Z}}
\newcommand{\h}{h}
\newcommand{\eo}{\varepsilon_0}
\newcommand{\ep}{\varepsilon}
 \newcommand{\repi} {\Re}
 \newcommand{\impi} {\Im}
  \newcommand{\Exp} {\mathrm{Exp}}
\newcommand{\cs}{$\clubsuit$}
 \renewcommand{\Im} {\mathrm{Im\,}}
 \renewcommand{\Re} {\mathrm{Re\,}}
 \DeclareMathOperator{\inj}{inj_M}
\newtheorem{theorem}{Theorem}
\newtheorem{corollary}[theorem]{Corollary}
\newtheorem{proposition}[theorem]{Proposition}
\newtheorem{lemma}[theorem]{Lemma}
\theoremstyle{definition}
\newtheorem{definition}{Definition}
\newtheorem{remark}{Remark}
\def\reals{{\mathbb R}}
\def\p{\partial}
\def\half{\frac{1}{2}}
\def\tchi{\tilde{\chi}}
\def\Ci{{\mathcal C}^\infty}
\def\tpsi{\tilde{\psi}}
\def\supp{\mathrm{supp}\,}
\def\O{{\mathcal O}}
\def\tzeta{\tilde{\zeta}}
\def\quarter{\frac{1}{4}}
\begin{document}

\title[Restriction bounds for Neumann eigenfunctions of planar  domains]{Non-concentration  and restriction bounds for Neumann eigenfunctions of piecewise $C^{\infty}$  bounded planar  domains}

\author[H. Christianson]{Hans Christianson}
\address{Department of Mathematics, UNC Chapel Hill} \email{hans@math.unc.edu}

\author[J. Toth]{John  A. Toth}
\address{Department of Mathematics and
Statistics, McGill University, 805 Sherbrooke Str. West, Montr\'eal
QC H3A 2K6, Ca\-na\-da.} \email{jtoth@math.mcgill.ca}

\maketitle

\begin{abstract}
Let $(\Omega,g)$ be a piecewise-smooth, bounded convex domain in $\R^2$ and consider $L^2$-normalized Neumann eigenfunctions $\phi_{\lambda}$ with eigenvalue $\lambda^2$ and $u_{\lambda}:= \phi_{\lambda} |_{\partial \Omega}$  the associated Dirichlet data (ie. boundary restriction of $\phi_{\lambda}$). Our first main result (Theorem \ref{T:non-con}) is a small-scale {\em non-concentration} estimate: We prove that for {\em any} $x_0 \in \overline{\Omega},$ (including boundary corner points)  and any $\delta \in [0,1),$ 
$$ \| \phi_h \|_{B(x_0,\lambda^{-\delta})\cap \Omega} = O(\lambda^{-\delta/2}).$$ 
Our subsequent results involve applications of the nonconcentration estimate to upper bounds for $L^2$ restrictions of  boundary eigenfunctions that are valid up to boundary corners. In particular, in Theorem \ref{dirichlet} we prove that for any {\em flat} boundary edge $\Gamma$ (possibly including corner points), the boundary restrictions $u_h:= \phi_h |_{\partial \Omega}$ satisfy the bounds
$$ \|u_{\lambda} \|_{L^2(\Gamma)}  = O_{\epsilon}(\lambda^{1/4 + \epsilon}),$$
for any $\epsilon >0.$
The exponent $1/4$ is sharp and the result improves on  the $O(\lambda^{1/3})$ universal $L^2$-restriction bound for Neumann
eigenfunctions due to Tataru \cite{Ta}.
The $O(\lambda^{1/4})$ -bound is also an extension to the boundary
(including corner points) of well-known interior $L^2$ restriction
bounds of Burq-Gerard-Tzvetkov \cite{BGT} along totally-geodesic hypersurfaces. 
\end{abstract}\ \\
\section{introduction}

\subsection{Non-concentration estimates} Let $\Omega \subset \R^2$ be a bounded, convex planar domain with boundary $\partial \Omega.$ We say that $\Omega$ is {\em piecewise smooth} if the boundary  $\partial \Omega = \cup_{j=1}^N \Gamma_j$ such that there exist defining functions $f_j \in C^{\infty}(\R^2; \R)$ with
$$\Gamma_j \subset \{ x \in \R^2; f_j(x) = 0, \,\,\, df_j (x) \neq 0 \}.$$

 We refer to the $\Gamma_j$'s as the boundary edges. We say that a piecewise-smooth $\Omega$ is a {\em domain with corners} if the
$\Gamma_j$'s are diffeomorphic to closed intervals with $\Gamma_j \cap \Gamma_{j+1} = c_j  \in \R^2; j=1,...,,N,$ such that at   $c_j = \Gamma_j \cap \Gamma_{j+1},$ 
$$ \text{rank} \, ( df_j(c_j), df_{j+1}(c_j) ) = 2.$$
We refer to ${\mathcal C}:= \{ c_j \}_{j=1}^{N}$ as the set of {\em corner points} and the rank condition on the defining functions at the $c_j$'s ensures that the boundary edges $\Gamma_j; j=1,...,N$ intersect at non-zero angles. We denote the angle at a corner $c_j$ by $\alpha_j.$

A fundamental issue regarding eigenfunctions involves their concentration properties (or lack thereof) on small  balls with radius that depends on the semiclassical parameter $h$ as $h \to 0^+.$  

Let $(M,g)$ be a compact Riemannian manifold without boundary and $\phi_h$  a Laplace eigenfunction with eigenvalue $h^{-2}$. Then, as pointed  out in \cite{So}, using the explicit asymptotic formula for the half-wave operator $e^{it \sqrt{- \Delta}}: C^{\infty}(M) \to C^{\infty}(M)$ it is not hard to prove that there exists $C_M>0$ such that 

\begin{equation} \label{soggebound}
\| \phi_h \|_{L^2(B(r))}^2 = O(r) \| \phi_h \|_{L^2(M)}^2, \quad  \forall r \geq C_M h\end{equation}\

We refer to estimates of the form (\ref{soggebound}) as {\em non-concentration} bounds.
The example of highest weight spherical harmonics  on the round sphere (see Remark \ref{gaussian} below) shows that (\ref{soggebound}) is, in general, sharp. However, in certain cases, one expects improvements. For instance, in the case of surfaces with non-positive curvature, one can get logarithmic improvements  \cite{So} (see also \cite{Han, HR}).

Since the proof of (\ref{soggebound}) uses the wave parametrix in a crucial way, the extension to manifolds with boundary is  non-obvious since the behaviour of  the  wave operators near $\partial \Omega$ is much more complicated than in the boundaryless case.
The first  main result of this paper (Theorem \ref{T:non-con}) is an
extension of the bounds in (\ref{soggebound}) to Neumann
eigenfunctions of a bounded piecewise-smooth, convex planar
domain. Our basic method of proof here is entirely stationary and uses
a Rellich commutator argument rather than wave methods.
 This stationary approach allows us to deal with {\em both} boundaries and corners as well.  We note that our result below holds right up to the boundary, including corner points.



\begin{theorem}
  \label{T:non-con}
  Let $\Omega \subset \reals^2$ be a piecewise $C^\infty$ bounded, convex
  domain 
  and consider the semiclassical Neumann eigenfunction problem:
  \begin{equation*}
    \begin{cases}
      -h^2 \Delta \phi_h(x) = \phi_h(x),  \quad x \in \Omega, \\
      \p_\nu \phi_h  |_{\p \Omega} = 0, \\
      \| \phi_h \|_{L^2 ( \Omega) } = 1,
    \end{cases}
  \end{equation*}
  where $\p_\nu$ is the outward pointing normal derivative.
Let 
  $p_0 \in
  \overline{\Omega}$ be a point in $\Omega$ or on the boundary.  Then
  for any $0 \leq \delta <1$, 
  
  \begin{equation}
    \| \phi_h \|^2_{L^2 ( B(p_0 , h^{\delta}))} = O(h^{\delta}).
  \end{equation}

\end{theorem}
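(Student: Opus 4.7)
\emph{Proof plan.} The argument is stationary, based on a Rellich-type Pohozaev identity applied to the eigenequation, with a multiplier tailored to the scale $h^\delta$ around $p_0$. Translate so $p_0 = 0$ and set $r := h^\delta$. Fix $\chi \in C_c^\infty([0,\infty))$ with $\chi \equiv 1$ on $[0,1/2]$ and $\supp\chi \subset [0,1]$, and consider the radial vector field multiplier $V(x) := \chi(|x|/r)\,x$, supported in $B(0,r)$. Multiplying the eigenequation $(-h^2\Delta -1)\phi_h = 0$ by $V\cdot\overline{\nabla\phi_h}$, integrating over $\Omega$, integrating by parts twice, taking real parts, and invoking the Neumann condition $\partial_\nu \phi_h = 0$ on smooth portions of $\partial\Omega$ to kill normal-derivative contributions, one should arrive at a Pohozaev-type identity of the form
\begin{equation*}
\int_\Omega \bigl(2\chi(s)+s\chi'(s)\bigr)|\phi_h|^2\,dx = h^2\int_\Omega(-s\chi'(s))\bigl(|\partial_\rho\phi_h|^2 - \rho^{-2}|\partial_\theta\phi_h|^2\bigr)\,dx + \int_\Omega(-s\chi'(s))|\phi_h|^2\,dx + \mathcal B_{\partial\Omega},
\end{equation*}
where $s = |x|/r$, $(\rho,\theta)$ are polar coordinates centered at $p_0$, and $\mathcal B_{\partial\Omega}$ collects boundary contributions on $\partial\Omega\cap B(0,r)$. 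Since $2\chi(s)+s\chi'(s)\geq 2$ on $[0,1/2]$ while $-s\chi'(s)$ is supported on the thin annulus $A_r := B(0,r)\setminus B(0,r/2)$, this rearranges into a Caccioppoli-like bound
\[
\int_{B(0,r/2)}|\phi_h|^2\,dx \;\leq\; C h^2\int_{A_r}|\nabla\phi_h|^2\,dx + C\int_{A_r}|\phi_h|^2\,dx + |\mathcal B_{\partial\Omega}|.
\]

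In the interior case $d(p_0,\partial\Omega)\geq r$ we have $\mathcal B_{\partial\Omega} = 0$, and both right-hand terms are $O(1)$ by the global energy identity $h^2\|\nabla\phi_h\|^2_{L^2(\Omega)} = 1$. To upgrade this to the required $O(h^\delta)$ scaling, I would iterate the identity dyadically, applying it at scales $r, 2r, 4r, \dots, R_0$ (with $R_0$ a small fixed constant), and couple each step with a second Rellich identity on the annulus that exchanges $h^2|\nabla\phi_h|^2$ for $|\phi_h|^2$ on a slightly larger shell. Summing the resulting geometric series and using the normalization $\|\phi_h\|_{L^2(\Omega)}=1$ at the top scale together with $h^2\|\nabla\phi_h\|^2_{L^2(\Omega)}=1$ should then collapse the bound to $\|\phi_h\|^2_{L^2(B(0,h^\delta))} \leq C h^\delta$.

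When $B(p_0,r)$ meets $\partial\Omega$ (including boundary points and corners), $\mathcal B_{\partial\Omega}$ reduces, after the Neumann cancellation, to an integral of $(V\cdot\nu)|\nabla_{\mathrm{tan}}\phi_h|^2$ along each boundary edge in $B(0,r)$. The convexity hypothesis is decisive: on every edge emanating from (or near) $p_0$, the radial vector $x$ points outward of $\Omega$ with respect to the edge seen from $p_0$, so $V\cdot\nu \leq 0$ pointwise. The boundary term thus carries the correct sign and can be absorbed into the left-hand side. At a corner point $c_j$ of angle $\alpha_j\in(0,\pi)$ (automatic for a convex domain), both adjacent edges contribute and convexity again guarantees the combined boundary integral retains the favorable sign; the interior iteration then runs identically.

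The principal technical obstacle is the dyadic iteration: at a single scale the annular error is only $O(1)$, so the per-scale Caccioppoli constants must be tuned precisely (by careful choice of $\chi$ and the second Rellich multiplier) so that the sum over the $O(\log h^{-1})$ dyadic scales from $h^\delta$ to $R_0$ telescopes to $O(h^\delta)$ rather than to $O(1)$ or $O(\log h^{-1})$. A secondary difficulty at corner points is the absence of $C^1$ boundary regularity, which rules out standard trace estimates; the entire boundary argument must rest on the convexity-driven sign of $V\cdot\nu$, uniformly as the corner angle $\alpha_j$ varies in $(0,\pi)$.
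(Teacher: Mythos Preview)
Your plan has two genuine gaps.

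\textbf{The dyadic iteration cannot produce $O(h^\delta)$.} At each scale your Caccioppoli inequality reads
\[
\int_{B(0,r/2)}|\phi_h|^2 \;\leq\; C\int_{A_r}|\phi_h|^2 \;+\; Ch^2\int_{A_r}|\nabla\phi_h|^2,
\]
and the only control on the gradient term is the global energy $h^2\|\nabla\phi_h\|_{L^2(\Omega)}^2=1$, which contributes $O(1)$ at \emph{every} dyadic step. Setting $a_k=\|\phi_h\|_{L^2(B(0,2^kr))}^2$, the recursion $a_0\le\theta a_1+\theta$ iterates to $a_0\le\theta^N a_N+\theta/(1-\theta)=O(1)$, never $O(r)$. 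To gain the factor $r$ you would need $h^2\int_{A_r}|\nabla\phi_h|^2=O(r)$, which is circular. The paper does not iterate in space at all: it works at the single scale $h^\delta$ with a cutoff $\tilde\chi(s)$ that is \emph{linear} on $|s|\le1$, so that $\partial_x\bigl(\tilde\chi(x/h^\delta)\bigr)\equiv \tfrac12 h^{-\delta}$ on $B(p_0,h^\delta)$. The commutator $[-h^2\Delta,\chi\partial_x]$ then directly produces $h^{-\delta}\int_{B}|h\partial_x\phi_h|^2$ on one side, while the other side is shown to be $O(1)$; this gives the bound for $\delta\le 1/2$ in one stroke. The range $1/2<\delta<1$ is reached by a bootstrap in $\delta$ (through $\delta_k=1-1/3k$), feeding the estimate at $\delta_k$ into the error terms at $\delta_{k+1}$.

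\textbf{The boundary sign is reversed, and the Neumann boundary terms are not controlled.} For $p_0=0$ on a convex boundary written locally as $\Omega=\{y<\alpha(x)\}$ with $\alpha$ concave and $\alpha(0)=0$, one computes $x\cdot\nu=(\alpha(x)-x\alpha'(x))/\kappa\ge0$; convexity makes the radial field point \emph{outward}, so $V\cdot\nu\ge0$, not $\le0$. More seriously, the Neumann Pohozaev boundary contribution is
\[
\mathcal B \;=\; \frac{h^2}{2}\int_{\partial\Omega}(V\cdot\nu)\,|\nabla_{\tan}\phi_h|^2 \;-\; \frac{1}{2}\int_{\partial\Omega}(V\cdot\nu)\,|\phi_h|^2,
\]
whose two pieces carry opposite signs regardless of the sign of $V\cdot\nu$; you have no a priori small-scale bound on $\|\phi_h\|_{L^2(\partial\Omega\cap B(0,r))}$ to absorb the second piece. (On a flat edge through $p_0$, or at a corner of a polygon, $V\cdot\nu\equiv0$ and $\mathcal B$ vanishes, so your scheme survives there---but not on curved edges.) The paper's mechanism is not a sign argument but an \emph{exact cancellation}: it runs the commutator with two Cartesian vector fields $\chi(x,y)\partial_x$ and $\rho(x,y)\partial_y$, where $\rho$ is built so that on the boundary $\rho=\alpha'\chi$ and $\rho_y=\chi_x+O(1)$. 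The Neumann relation $\partial_y\phi_h=\alpha'\partial_x\phi_h$ on $y=\alpha(x)$ then forces the dangerous $O(h^{-\delta})$ boundary integrals from the two commutators to cancel, leaving only $O(1)$ remainders. This cancellation (and its corner version with $\rho_1,\rho_2$ on the two sides) is the heart of the Neumann case.
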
\

\begin{remark}
The theorem is also true for Dirichlet eigenfunctions, but the proof
in that case is much easier.  We will point out the small
modifications necessary to the proof of Theorem \ref{T:non-con} in the proof.

\end{remark}


  






\begin{remark}
\label{R:h-delta}
  
  As the proof will indicate,
  the bound for eigenfunction $L^2$ mass in
  a ball of radius $h^\delta$, $0 \leq \delta \leq 1/2$, is relatively straightforward.
The cases where $0 \leq \delta <1/2$ follow immediately from the argument
proving the $\delta = 1/2$ case.  
  To
  improve to $1/2 < \delta < 1$, we use the estimate for $\delta =
  1/2$ to bootstrap to $\delta = 2/3$.  Then an induction step proves
  that for any integer $k>0$, the result is true for $\delta = 1-1/3k$.

\end{remark}





\begin{remark} \label{gaussian}  The estimate in Theorem \ref{T:non-con} is sharp. To see this,  let $\gamma \subset \Omega$ be a geodesic segment with $\gamma = \{ (x',x_n=0) \in \Omega; |x'| < \delta \}$ and $U = \{(x',x_n); |x_n| < \delta \}$ be a tubular neighbourhood, where $(x',x_n): U \to \R^n$ are Fermi coordinates. An $L^2$-normalized Gaussian beam localized on $\gamma$ is  of the form
$$ u_h(x) = (2\pi h)^{-1/4} e^{-x_n^2/h} \,e^{ i x'/h} \, ( \,  a(x',x_n;) + O(h) \,); \, a \in C^{\infty}(U), \,\, |a(x)| >0, \,\, x \in U.$$
It follows that
$$ \|u_h \|_{B(0,h^{1/2})}^2 \sim \int_{|x_n| \leq h^{1/2}} \int_{|x'| < h^{1/2}} |u_h(x)|^2 \, dx \sim h^{1/2}.$$

Consider the case where $\Omega = \{(x,y); \frac{x^2}{a^2} +
\frac{y^2}{b^2} = 1, y \geq 0 \}$  where $a>b>0$ is the half-ellipse
and let $\phi_h$ be an $L^2$-normalized Neumann eigenfunction. It is
well-known (see \cite{TZ1} section 2.2)  that there exists a subsequence of eigenfunctions that are Gaussian beams along the major axis $\{ (x,0); - a \leq x \leq a \}.$
Consequently, the estimate in Theorem \ref{T:non-con} is sharp in
general. In the special case where the $u_h$ satisfy polynomial
 small-scale quantum ergodicity (SSQE) on a scales $h^{1/2},$ since the volume of a ball of radius $h^{1/2}$ is
$h,$ one putatively expects a bound of $O(h)$ on the RHS in Theorem
\ref{T:non-con}. Unfortunately, to our knowledge, there are no
rigorous results  on polynomial SSQE known at present,
although logarithmic SSQE was proved by X. Han \cite{Han}.

\end{remark}

\subsection{Restriction bounds along geodesic boundary components}

Our second theorem deals with bounds for eigenfunction restrictions. This problem has been the focus of many papers over the past decade and has deep and interesting connection to the study of the asymptotics of eigenfunction nodal set and, in particular,  intersection bounds \cite{CTZ,DZ,ET, G, GRS, HZ, JJ, JJZ, TZ1, TZ2, TZ3}

\noindent Specifically, in the case of $L^2$-normalized Neumann eigenfunctions with $\|  \phi_h \|_{L^2(\Omega)} =1,$ the $h$-Sobolev estimates give
$$ \|u_h \|_{L^2(\partial \Omega)} = O( h^{-1/2} \| \phi_h \|_{L^2(\Omega)} ) = O(h^{-1/2}),$$
and so the bounds in Theorem \ref{dirichlet} are polynomial
improvements over the (automatic) $h$-Sobolev estimates.

It was proved by Tataru \cite{Ta} that for bounded
domains with smooth  boundary, the elementary Sobolev bounds can be improved and the Dirichlet traces $u_h$ of 
Neumann eigenfunctions satisfy 
\begin{equation} \label{Tataru}
\| u_h \|_{L^2(\partial \Omega)} = O(h^{-1/3}).
\end{equation}
For general smooth boundaries,  (\ref{Tataru}) is sharp and is saturated by whispering gallery modes on the disc \cite{HT}.

 \begin{remark} We emphasize that the $O(h^{-1/3})$ boundary estimate in (\ref{Tataru}) should {\em not} be confused with the restriction bound 
$\| u_h \|_{L^2(H)} = O(h^{-1/6})$ (see \cite{BGT}) in the case where $H$ is an {\em interior} curve segment with positive curvature. Roughly speaking, the latter is consistent with the decay of semiclassically rescaled Airy functions in classically {\em allowable} regions, whereas the former corresponds to Airy decay in the classically {\em forbidden} region. This is an interesting contrast that we hope to address in detail elsewhere.  \end{remark}

\noindent As an application of the non-concentration estimate (see Theorem \ref{T:non-con}),  we consider the problem of deriving sharp $L^2$ restriction bounds for Neumann eigenfunctions along geodesic boundary segments {\em up to corners}. When $\Gamma \subset \mathring{\Omega}$ is a {\em strictly interior} geodesic segment, the bound $\|u_h \|_{L^2(\Gamma)} = O(h^{-1/4})$  follow from the general $L^p$  restriction bounds of Burq-Gerard-Tzvetkov \cite{BGT}. The main novel feature of Theorem \ref{dirichlet} is the extension of the interior $O(h^{-1/4})$ geodesic restriction bound to the boundary (including corners) with a  loss of $ h^{-\epsilon}$ for any $\epsilon >0,$ thereby improving on the universal Tataru $O(h^{-1/3})$-bound along geodesic boundary edges. 


To state our second main result, we will need the following 
\begin{definition} \label{admissible}. Let $\Omega$ be a piecewise $C^{\infty}$ planar domain with corners and $\Gamma_j \subset \partial \Omega$ be a flat edge. We say that $\Omega$ is {\em admissible} if for any adjacent interior angle $\alpha_j$ to a corner $c_j \in \Gamma_j$ the following conditions are satisfied:
$$ (i) \,\,\,\big\{ L_j(t):= c_j +  e^{2 i (\pi - \alpha_j)} t, \,\, \, t \in \R \big\} \cap {\mathcal C} = c_j, \quad \text{when} \,\, \alpha_j \in (\pi/2,\pi),$$
$$ (ii) \,\,\,  \big\{ L_j(t):=  c_j +  e^{2 i \alpha_j} t,  \,\, \, t \in \R \big\} \cap {\mathcal C} = c_j, \quad \text{when} \,\, \alpha_j \in (0,\pi/2].$$
\end{definition}

Roughly speaking, admissibility amounts to the condition that  glancing rays to a flat boundary edge $\Gamma_j$ starting from a corner point $c_j \in \Gamma_j$ reflect off an adjacent edge and do not hit any other corner point. The condition in terms of angles is stated slightly differently depending on whether the corner angle at $c_j$ is obtuse as in (i), or acute as in (ii) (see Figures 4 and 5 where the lines $L_j$ are pictured).

\begin{theorem} \label{dirichlet}
Let $\Omega \subset \R^2$ be a convex, bounded domain with corners and boundary  $\partial \Omega$ that is admissible in the sense of Definition \ref{admissible}.  Let $\Gamma_j$ be a totally geodesic (i.e. flat) boundary segment and  $u_h:= \phi_h |_{\partial \Omega}$ be  the Dirichlet traces of the Neumann eigenfunctions, $\phi_h.$  Then, for any $\epsilon >0$ there exists $C_{\epsilon}>0$ and $h_0(\epsilon)>0$ such that for $h \in (0,h_0(\epsilon)],$

$$  \| u_h \|_{L^2(\Gamma_j)}  \leq C_{\epsilon} h^{-1/4 - \epsilon}.$$

\end{theorem}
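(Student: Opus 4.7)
The plan is to decompose $\Gamma_j$ at scale $\rho := h^{1/2-\epsilon}$ into a bulk piece $\Gamma_j^{\mathrm b}$, consisting of points at distance $\geq \rho$ from both endpoint corners, and two short corner pieces of length $\rho$ adjacent to each endpoint. The corner pieces are handled directly by Theorem \ref{T:non-con}, while the bulk is handled by even reflection across $\Gamma_j$ followed by the interior Burq--G\'erard--Tzvetkov restriction bound \cite{BGT}.

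For an endpoint corner $c \in \Gamma_j \cap \mathcal{C}$, set $B := B(c,\rho) \cap \overline\Omega$ (a sector of angle $\alpha_j$) and $B' := B(c,2\rho) \cap \overline\Omega$. The standard trace inequality on this Lipschitz sector gives
\[
\|u_h\|_{L^2(\Gamma_j \cap B)}^2 \;\leq\; C_{\alpha_j}\,\|\phi_h\|_{L^2(B)}\,\|\nabla\phi_h\|_{L^2(B)},
\]
and Caccioppoli applied to the eigenfunction gives $\|\nabla\phi_h\|_{L^2(B)} \leq C h^{-1}\|\phi_h\|_{L^2(B')}$. Applying Theorem \ref{T:non-con} at $c$ with $\delta = 1/2-\epsilon$ yields $\|\phi_h\|_{L^2(B')} \leq C\,h^{1/4-\epsilon/2}$, and combining we obtain
\[
\|u_h\|_{L^2(\Gamma_j \cap B)}^2 \;\leq\; C\,h^{-1}\rho \;=\; C\,h^{-1/2-\epsilon},
\]
so each corner piece contributes $O(h^{-1/4-\epsilon/2})$ to $\|u_h\|_{L^2(\Gamma_j)}$.

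For the bulk piece, reflect $\phi_h$ across the affine line through $\Gamma_j$ by even reflection, obtaining an extension $\widetilde\phi_h$ on $\Omega \cup \sigma_j(\Omega)$. Because $\Gamma_j$ is flat and $\partial_\nu\phi_h|_{\Gamma_j}=0$, the reflected function is smooth across $\Gamma_j$ and satisfies $-h^2\Delta\widetilde\phi_h = \widetilde\phi_h$ in the reflected domain. Here the admissibility hypothesis (Definition~\ref{admissible}) enters: the line $L_j$ is exactly the image of $\Gamma_j$ under the once-reflected (acute case) or twice-reflected (obtuse case) corner reflection at $c_j$, and the condition $L_j\cap\mathcal{C}=\{c_j\}$ says that the unfolded adjacent edges of $\sigma_j(\Omega)$ avoid every other corner of $\Omega$. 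This produces a tubular neighborhood $V$ of $\Gamma_j^{\mathrm b}$ inside the reflected domain on which $\widetilde\phi_h$ is a genuine smooth semiclassical eigenfunction of $-h^2\Delta$ on an open subset of $\R^2$. Inside $V$, the segment $\Gamma_j^{\mathrm b}$ is an interior totally geodesic curve, so the Burq--G\'erard--Tzvetkov interior $L^2$-restriction bound yields $\|u_h\|_{L^2(\Gamma_j^{\mathrm b})} \leq C_\epsilon\,h^{-1/4-\epsilon}$, where the extra $h^{-\epsilon}$ absorbs the fact that $V$ has thickness only $O(\rho)$ near the excised corners. Adding the bulk and corner bounds proves the theorem.

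The main obstacle is the third step. Although the Neumann even-reflection is elementary, running the BGT wave-parametrix/$TT^*$ restriction argument up to the endpoints of $\Gamma_j$ demands that $\widetilde\phi_h$ be a smooth semiclassical eigenfunction on a tubular neighborhood of $\Gamma_j^{\mathrm b}$ large enough to support the $-h^2\Delta$ propagator for a time comparable to the length of $\Gamma_j$. The admissibility condition is precisely the geometric hypothesis that secures this: it rules out the degenerate configurations in which a reflected (unfolded) trajectory from a corner of $\Gamma_j$ would hit another corner of $\Omega$ and thereby inject additional corner singularities into the parametrix. Once this is arranged, combining the BGT estimate on the bulk at scale $\rho$ with the non-concentration bound on the corner neighborhoods produces the exponent $h^{-1/4-\epsilon}$ in place of a clean $h^{-1/4}$.
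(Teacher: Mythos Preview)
Your corner-piece estimate is correct and matches the paper's near-corner argument (Theorem~\ref{T:non-con} plus $h$-Sobolev restriction; see (\ref{upshot4})--(\ref{UPSHOT2.2})). The gap is in the bulk estimate.

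First, the role you assign to admissibility is wrong. The even reflection $\sigma_j$ across $\Gamma_j$ sends $\Omega$ to the \emph{other} side of the line through $\Gamma_j$; since $\Omega$ is convex, $\sigma_j(\Omega)\cap\Omega=\Gamma_j$ automatically, and the reflected adjacent edges $\sigma_j(\Gamma_{j\pm1})$ cannot meet any corner of $\Omega$ other than $c_j,c_{j+1}$, \emph{regardless} of admissibility. A tubular neighbourhood of $\Gamma_j^{\mathrm b}$ of width $\sim\rho\sin\alpha_j$ (near the ends) and fixed width (in the middle) always sits inside $\Omega\cup\sigma_j(\Omega)$. So admissibility is irrelevant to your construction. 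In the paper, admissibility (Definition~\ref{admissible}) controls something entirely different: the \emph{billiard} reflection of glancing rays off the adjacent side $\Gamma_k$ (once for obtuse angles, twice for acute), and the line $L_j$ is the trajectory of that reflected ray, not the image of $\Gamma_j$ under $\sigma_j$. The condition $L_j\cap\mathcal C=\{c_j\}$ guarantees that this reflected ray lands on $\partial\Omega$ transversally and away from corners, which is what allows the Rellich bound of Lemma~\ref{rellich} to close the estimate in (\ref{CRUCIAL})--(\ref{matrixbound}).

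Second, and more seriously, you cannot invoke \cite{BGT} as a black box on $\Omega\cup\sigma_j(\Omega)$. That result is for closed manifolds; your reflected domain still has a boundary (with corners of angle $2\alpha_j$ at $c_j$), and $\widetilde\phi_h$ is only an eigenfunction on an open set. What you would need is a \emph{local} $L^2$-restriction estimate of the form $\|\phi\|_{L^2(\gamma\cap B_{r/2})}\lesssim h^{-1/4}r^{-1/4}\|\phi\|_{L^2(B_r)}$ for solutions of $(-h^2\Delta-1)\phi=0$ on $B_r$, applied at the Whitney scales $r\sim\mathrm{dist}(\cdot,c_j)$. Such an estimate is plausible (the semiclassical $TT^*$ machinery is local), but it is not the statement of \cite{BGT}, and your sentence ``the extra $h^{-\epsilon}$ absorbs the fact that $V$ has thickness only $O(\rho)$'' is not a proof. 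If a local BGT of this kind were available, your argument would actually yield the bound \emph{without} admissibility and \emph{without} the $h^{-\epsilon}$ loss---which would be stronger than Theorem~\ref{dirichlet}---so you should either supply that ingredient or recognise that it is the entire content of the bulk estimate.

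The paper's route is completely different: it $h$-microlocalizes the boundary jumps equation $u_h=N(h)u_h$ near the glancing set of $\Gamma_j$, decomposes into a geometric term $N_j^{\mathcal G}(h)$ and a diffractive term $N_j^{\mathcal D}(h)$, and shows that the geometric transfer operators $N_{jk}(h)$ are $h$-FIOs with one-sided fold canonical relations, yielding $\|N_{jk}(h)\|_{L^2\to L^2}=O(h^{-1/4}|\log h|^{1/4})$ via a dyadic decomposition around the fold (Section~\ref{transfer}). Admissibility enters to ensure that after a second application of the jumps equation, the mass is carried by $Q_1(h)$ to a region transverse to $\partial\Omega$ and away from corners, where Lemma~\ref{rellich} gives an $O(1)$ bound. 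No wave parametrix or reflection is used.
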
 


Let  $N(h): C^{0}(\partial \Omega) \to C^{0}(\partial \Omega)$ be the double layer potential corresponding to the free Green's function $G(q,q',h)$ of the Helmholtz equation $(-h^2 \Delta_q - 1) G(q,q',h) = \delta(q-q')$ in $\R^2$ (see section \ref{restriction}). 
Setting aside technicalities, the rough idea of the proof of Theorem \ref{dirichlet} involves combining a suitably microlocalized version of the boundary jumps equations $u_h = N(h) u_h$ with Sobolev restriction applied to the non-concentration results in Theorem \ref{T:non-con} to bound the $L^2$-mass of eigenfunction restrictions near corners. Specifically, in sections \ref{restriction} and \ref{defn} we show that when $\Gamma_j$ is a flat boundary edge, there exist certain boundary operators $N^{\mathcal D}_j(h), N^{\mathcal G}_j(h): C^{0}(\partial \Omega) \to C^{0}(\Gamma_j)$ so that with $u^{j}_h := {\bf 1}_{\Gamma_j} u_h,$

\begin{equation} \label{gutsofthm2}
 u^{j}_h = N_j^{\mathcal G}(h) u_h + N_j^{\mathcal D}(h) u_h  + O(1)_{L^2}. \end{equation}\
 
 In (\ref{gutsofthm2}), $N^{\mathcal G}_j(h)$ is an $h$-Fourier integral operator ($h$-FIO) with canonical relation graph $\beta,$ where $\beta: \mathring{B}^*\partial \Omega \to \mathring{B}^*\partial \Omega$ is the billiard map. We refer to it as the {\em geometric} term in (\ref{gutsofthm2}).  On the other hand, the operator $N^{\mathcal D}_j(h)$ corresponds to diffraction at the corners bounding the edge $\Gamma_j$ and so, we refer to the latter as the {\em diffractive} term.  Not suprisingly, the main contribution indeed comes from the geometric term and by successive reflections in the sides adjacent to a flat side $\Gamma_j$ (see also Figures 4 and 5),  in section \ref{defn} we show that under the admissibility assumption in Definition \ref{admissible}, for any $\epsilon >0,$
 $$ \|N^{\beta}_j(h) u_h \|_{L^2(\Gamma_j)} = O_{\epsilon}(h^{-1/4-\epsilon}).$$
 A more straightforward argument in subsection \ref{diffraction} also shows that  the diffractive term satisfies
 $$\| N^{\mathcal D}_j(h) u_h \|_{L^2} = O_{\epsilon}(h^{-1/4-\epsilon}).$$ 
Inserting these bounds in (\ref{gutsofthm2}) then proves Theorem \ref{dirichlet}. 

We note that Theorem \ref{dirichlet} holds up to corners and also
 that the $-1/4$-power in Theorem \ref{dirichlet} is sharp. Indeed, it is not hard to show that the Gaussian beam eigenfunctions associated with the major axis of the semi-ellipse  saturate the bound in (\ref{dirichlet}).
Using the $L^p$ bound results for polygonal domains in  \cite{BFM},  Matt Blair (personal communication) has recently proved  that for {\em polygonal domains} one can dispense with the $h^{-\epsilon}$-correction in Theorem \ref{dirichlet}. However, at the moment,  for general domains with corners, we cannot rule out additional  background diffractive effects in the restriction bounds near corners leading to a possible $h^{-\epsilon}$ loss. We hope to address this question in detail elsewhere.

 The results in both Theorems \ref{T:non-con}  and \ref{dirichlet} should extend to the general Riemannian setting of compact manifolds with boundary. However, the latter case presents additional complications that we hope to address elsewhere using more sophiscated 2-microlocal machinery. 

Throughout the paper, given a set $X$ and two non-negative functions $f,g: X \to \R^+$, the notation $f \lessapprox g$ means that there exists a constant $C>0$ such that $f(x) \leq C g(x)$ for all $x \in X.$ Similarily, the notation  $f \approx g$ means that both $f \lessapprox g$ and $g \lessapprox f.$ In addition, we will use the notation $ O(h^{-\alpha -0})$  as a convenient shorthand for $O_{\epsilon}(h^{-\alpha - \epsilon})$ for any $\epsilon >0.$

 \section{One point non-concentration in shrinking balls}

Before jumping into the details of the proof of Theorem
\ref{T:non-con}, let us sketch the main intuitive idea.  Suppose $p_0$
is a point on a flat side of $\p \Omega$, and assume for simplicity
that $\p \Omega = \{ y = 0 \}$ locally near $p_0$ and $p_0 = (0,0)$.
Let $\chi$ be a smooth monotone bounded function, $\chi(y) \sim h^{-1/2} y$
in an $h^{1/2}$ neighbourhood of $y = 0$, and constant outside a
neighbourhood of size $M h^{1/2}$ for large $M$.  Then $\chi'(y)$ is a
bump function supported on $-M h^{1/2} \leq y \leq M h^{1/2}$ with
$\chi'(y) \sim h^{-1/2}$ on $-h^{1/2} \leq y \leq h^{1/2}$.  We then
apply a Rellich commutator type argument:
\begin{align*}
  \int_\Omega & ([-h^2 \Delta -1, \chi \p_y ] \phi_h) \phi_h dV \\
  & = -2
  \int_\Omega \chi'(y) (h^2 \p_y^2 \phi_h ) \phi_h dV + \O(1) \\
  & = 2
  \int_\Omega \chi' | h \p_y \phi_h |^2 dV + \O(1) \\
  & \gtrapprox h^{-1/2} \int_{B((0,0),h^{1/2}) \cap \Omega} | h \p_y
  \phi_h |^2 dV - \O(1).
\end{align*}
Computing the commutator explicitly shows the left hand side is
bounded.  
Adding a similar computation with $\chi(x) \p_x$ and 
rearranging would prove the theorem (for $\delta = 1/2$).  
A suitable $h$-dependent cutoff allows us to integrate by parts to go
from estimating $\| h \nabla \phi_h \|_{L^2(B)}$ from below to
estimating $\| \phi_h \|_{L^2(B)}$ from below.  Here the 
$O(1)$ error term is from differentiating $\chi$ twice: $h \chi'' =
O(1)$.  This allows us to prove the theorem at the limiting  scale
$h^{1/2}$.  The tricky part is using the $\delta = 1/2$ result to 
prove the result for $\delta = 2/3$, and then apply an induction
argument to get the result for any $\delta<1$.   
Of course
in this little sketch, the $\O(1)$ terms from integrating by parts,
etc. are actually very subtle in the case of Neumann eigenfunctions,
and the bulk of the proof is dealing with these ``lower order terms''.

\subsection{Proof of Theorem \ref{T:non-con}}
\begin{proof}[Proof of Theorem \ref{T:non-con}]
For simplicity, we will assume throughout the proof that the
eigenfunctions $\phi_h$ are real-valued, however the general case can
be obtained by taking real/imaginary parts where necessary.  This is
not a problem since the quantity we eventually want to compute is
real-valued.  However, assuming $\phi_h$ is real-valued does save us
some notational headaches.

The proof will proceed by looking at boundary pieces away from corners
and at corners separately, although the proof for corners has much in
common with smooth sides.


 The proof has several steps.  First we establish the result for
$\delta = 1/2$.  The proof for $0 \leq \delta < 1/2$ is similar (and easier), so we
omit the details.  Then we use the $\delta = 1/2$ estimate to
bootstrap the $\delta = 2/3$ estimate.  Again, for $1/2 < \delta <
2/3$, the proof is the same as for $\delta = 2/3$ (but again easier).
Our final step is an induction to prove that for any integer $k>0$ the
result is true for $\delta = 1-1/3k$.

We will employ a number of convenient cutoff
functions.

  Let $\tchi(s) \in \Ci ( \reals)$ satisfy the following conditions:
  \begin{itemize}

  \item $\tchi$ is odd,

  \item $\tchi' \geq 0$,

    \item $\tchi(s) \equiv -1$ for $s \leq -3$ and $\tchi(s) \equiv 1$ for $s
      \geq 3$,

    \item
      $\tchi(-1) = -1/2$ and $\tchi(1) = 1/2$,

    \item $\tchi(s) = \frac{s}{2}$ for $-1 \leq s \leq 1$.

  \end{itemize}
See Figure \ref{F:tchi-2} for a picture.

    \begin{figure}
\hfill
\centerline{
\begingroup%
  \makeatletter%
  \providecommand\color[2][]{%
    \errmessage{(Inkscape) Color is used for the text in Inkscape, but the package 'color.sty' is not loaded}%
    \renewcommand\color[2][]{}%
  }%
  \providecommand\transparent[1]{%
    \errmessage{(Inkscape) Transparency is used (non-zero) for the text in Inkscape, but the package 'transparent.sty' is not loaded}%
    \renewcommand\transparent[1]{}%
  }%
  \providecommand\rotatebox[2]{#2}%
  \newcommand*\fsize{\dimexpr\f@size pt\relax}%
  \newcommand*\lineheight[1]{\fontsize{\fsize}{#1\fsize}\selectfont}%
  \ifx\svgwidth\undefined%
    \setlength{\unitlength}{273.47494125bp}%
    \ifx\svgscale\undefined%
      \relax%
    \else%
      \setlength{\unitlength}{\unitlength * \real{\svgscale}}%
    \fi%
  \else%
    \setlength{\unitlength}{\svgwidth}%
  \fi%
  \global\let\svgwidth\undefined%
  \global\let\svgscale\undefined%
  \makeatother%
  \begin{picture}(1,0.38988665)%
    \lineheight{1}%
    \setlength\tabcolsep{0pt}%
    \put(0,0){\includegraphics[width=\unitlength,page=1]{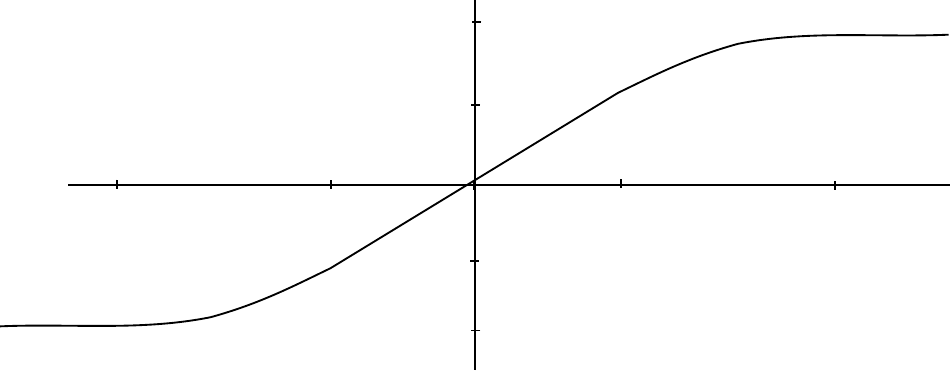}}%
    \put(0.09378875,0.16286937){\color[rgb]{0,0,0}\makebox(0,0)[lt]{\lineheight{1.25}\smash{\begin{tabular}[t]{l}$-3$\end{tabular}}}}%
    \put(0.32922772,0.16438834){\color[rgb]{0,0,0}\makebox(0,0)[lt]{\lineheight{1.25}\smash{\begin{tabular}[t]{l}$-1$\end{tabular}}}}%
    \put(0.64517173,0.15983148){\color[rgb]{0,0,0}\makebox(0,0)[lt]{\lineheight{1.25}\smash{\begin{tabular}[t]{l}$1$\end{tabular}}}}%
    \put(0.85782626,0.16135039){\color[rgb]{0,0,0}\makebox(0,0)[lt]{\lineheight{1.25}\smash{\begin{tabular}[t]{l}$3$\end{tabular}}}}%
  \end{picture}%
\endgroup%
}
\caption{\label{F:tchi-2} A  sketch of the function $\tchi$ used in
  the proof of Theorem \ref{T:non-con}.}
\hfill
\end{figure}
Let $\gamma(s) = \tchi'(s)$ so that $\gamma$ has support in $\{ -3
\leq s \leq 3 \}$, $\gamma(s) \geq 0$, and $\gamma(s) \equiv 1/2$ for
$| s | \leq 1$.

    Choose also a smooth bump function $\tpsi(s) \in \Ci (\reals)$
    satisfying
    \begin{itemize}
    \item $\tpsi(s)$ is even and $\tpsi' \leq 0$ for $s \geq 0$,

    \item $\tpsi(s) \equiv 1$ for $-1 \leq s \leq 1$,

    \item $\tpsi (s) \equiv 0$ for $| s | \geq 2$.

      \end{itemize}


 \subsubsection{  Analysis away from corner points } 
    
    We first consider a boundary point $p_0$ which is on a smooth
    component of the boundary 
  $\Gamma$ away from corners.  
Rotate, translate, and use graph coordinates  so that $\Gamma
\subset \{ y = \alpha(x) \}$ for  locally  smooth $\alpha$, $p_0 = (0,0)$, and $\Omega$
lies below the curve $y=\alpha(x)$.  We will eventually need to
invert $y = \alpha(x)$, so rotate further if necessary to assume that
$\alpha'(0) = 1$.  Let $\beta = \alpha^{-1}$ so that $y = \alpha(x)
\iff x = \beta(y)$ locally near $(0,0)$.  We assume as before that
$\Omega$ lies below the curve $y = \alpha(x)$; that is, $\Omega \subset \{ (x,y); y < \alpha(x) \}.$

Let $\kappa = (1 + (\alpha')^2)^\half$ be the arclength element with
respect to $x$.  Then the normal and tangential derivatives are respectively
\begin{equation}
  \label{E:nuxy-1}
\p_\nu = -\frac{\alpha'}{\kappa} \p_x + \frac{1}{\kappa} \p_y , \,\,\,
\p_\tau = \frac{1}{\kappa} \p_x + \frac{\alpha'}{\kappa} \p_y
\end{equation}
so that
\begin{equation}
  \label{E:nuxy-2}
\p_x = \frac{1}{\kappa} \p_\tau - \frac{\alpha'}{\kappa} \p_\nu ,
\,\,\, \p_y = \frac{\alpha'}{\kappa} \p_\tau + \frac{1}{\kappa} \p_\nu.
\end{equation}

For $\epsilon>0$ sufficiently small but independent of $h$, let

\begin{equation}
  \chi(x,y) = \tchi ( x/h^{1/2}) \tpsi (x/\epsilon) \tpsi(
  y/\epsilon)\label{E:chidef}.
\end{equation}\

If $\epsilon >0$
is sufficiently small, we may assume that $\supp ( \chi |_{\p \Omega}) \subset
\Gamma$.  We have $\chi(x,y) = x/2h^{1/2}$ for $-h^{1/2} \leq x \leq
h^{1/2}$ and $-\epsilon \leq y \leq \epsilon$.  We use  the short hand
notation $\chi_x  := \partial_{x} \chi,  \chi_y := \partial_y \chi$, so    $\supp \chi_x$  consists of three connected components, one
near zero, one near $-\epsilon$, and one near $\epsilon$.  Note:
since $\tchi(x/h^{1/2})$ is constant for $ x  \leq -3 h^{1/2}$ and $x \geq
3h^{1/2}$, we have that  
$\chi_x$ depends on $h$ for $-3h^{1/2} \leq x \leq 3 h^{1/2}$, but on
the set 
$\{ | x | \geq \epsilon \}$, $\chi_x = \epsilon^{-1} \tchi ( x/h^{1/2}
) \tpsi' ( x/\epsilon)\tpsi(y/\epsilon)$ is independent of $h$.  This means that
\begin{equation}
  \label{E:chixsupp}
\chi_x(x,y) \geq  h^{-1/2} \gamma(x/h^{1/2}) \gamma(y/h^{1/2}) - \O(1)
\end{equation}
so that, in particular, $\chi_x \geq  h^{-1/2}/4$ on $B((0,0), h^{1/2})$.

In order to ease notation, let $r>0$ be a small parameter  not depending on $h$  such that
$r \gg \epsilon$ but a $r$ neighbourhood of $(0,0)$ still
does not meet any  corners.  This is just so that integrating in
$[-r ,r]^2 \cap \Omega$ includes the
full support of $\chi$ inside $\Omega$.   See Figure
\ref{F:smooth-side-chi} for a picture.

    \begin{figure}
\hfill
\centerline{
\begingroup%
  \makeatletter%
  \providecommand\color[2][]{%
    \errmessage{(Inkscape) Color is used for the text in Inkscape, but the package 'color.sty' is not loaded}%
    \renewcommand\color[2][]{}%
  }%
  \providecommand\transparent[1]{%
    \errmessage{(Inkscape) Transparency is used (non-zero) for the text in Inkscape, but the package 'transparent.sty' is not loaded}%
    \renewcommand\transparent[1]{}%
  }%
  \providecommand\rotatebox[2]{#2}%
  \newcommand*\fsize{\dimexpr\f@size pt\relax}%
  \newcommand*\lineheight[1]{\fontsize{\fsize}{#1\fsize}\selectfont}%
  \ifx\svgwidth\undefined%
    \setlength{\unitlength}{232.93729591bp}%
    \ifx\svgscale\undefined%
      \relax%
    \else%
      \setlength{\unitlength}{\unitlength * \real{\svgscale}}%
    \fi%
  \else%
    \setlength{\unitlength}{\svgwidth}%
  \fi%
  \global\let\svgwidth\undefined%
  \global\let\svgscale\undefined%
  \makeatother%
  \begin{picture}(1,0.76970696)%
    \lineheight{1}%
    \setlength\tabcolsep{0pt}%
    \put(0,0){\includegraphics[width=\unitlength,page=1]{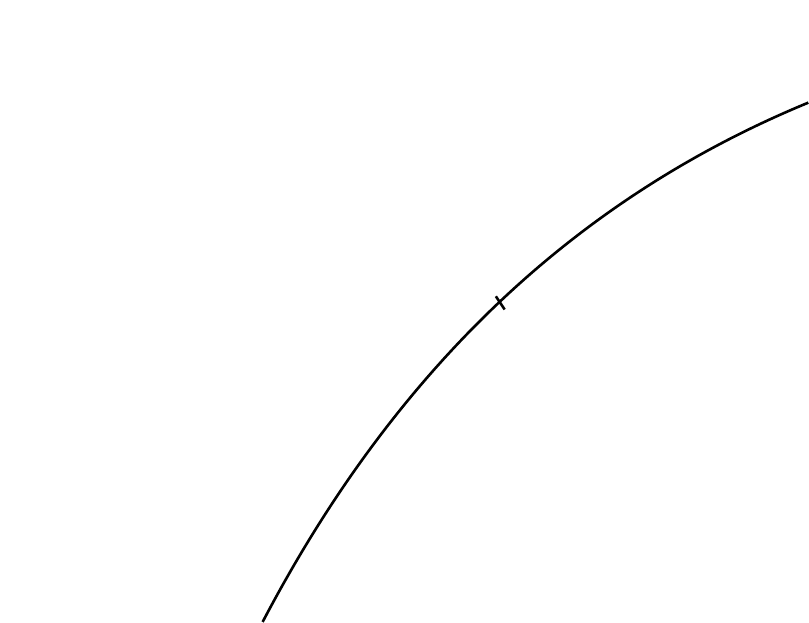}}%
    \put(0.53146402,0.41244644){\color[rgb]{0,0,0}\makebox(0,0)[lt]{\lineheight{1.25}\smash{\begin{tabular}[t]{l}$(0,0)$\end{tabular}}}}%
    \put(0.40044624,0.07339991){\color[rgb]{0,0,0}\makebox(0,0)[lt]{\lineheight{1.25}\smash{\begin{tabular}[t]{l}$y = \alpha(x)$\end{tabular}}}}%
    \put(0,0){\includegraphics[width=\unitlength,page=2]{smooth-side-chi.pdf}}%
    \put(0.1632925,0.73709022){\color[rgb]{0,0,0}\makebox(0,0)[lt]{\lineheight{1.25}\smash{\begin{tabular}[t]{l}$\chi \equiv \tchi(x/h^\half)$\end{tabular}}}}%
    \put(-0.00356352,0.17667156){\color[rgb]{0,0,0}\makebox(0,0)[lt]{\lineheight{1.25}\smash{\begin{tabular}[t]{l}$\supp(\chi)$\end{tabular}}}}%
  \end{picture}%
\endgroup%
}
\caption{\label{F:smooth-side-chi} $\Omega$ in a neighbourhood of a
  point on a smooth side and the function $\chi$.}
\hfill
\end{figure}

We will use a Rellich-type commutator argument, but terms that appear
``lower order'' have non-trivial dependence on $h$ and are not really
lower order.  We have
\[
[-h^2 \Delta , \chi \p_x ] = -2 \chi_x h^2 \p_x^2 - h \chi_{xx} h \p_x
- 2 \chi_y h \p_x h \p_y - h\chi_{yy} h \p_x.
\]
Since $\chi_{xx} = \O(h^{-1})$ and $\chi_y$ and $\chi_{yy}$ are
bounded independent of $h$, we have
\begin{align}
  \int_{\Omega} & ([-h^2 \Delta -1, \chi \p_x ] \phi_h ) \phi_h dV
  \notag \\
  & = \int_{\Omega} ((-2 \chi_x h^2 \p_x^2 - h \chi_{xx} h \p_x
  - 2 \chi_y h \p_x h \p_y - h\chi_{yy} h \p_x) \phi_h ) \phi_h dV.
  \label{E:first-comm}
\end{align}

We recall the standard estimate for first order derivatives:
\[
\int_\Omega | h \nabla \phi_h |^2 dV = \int_\Omega (-h^2 \Delta \phi_h
) \phi_h dV = 1.
\]
We further have
\begin{align*}
  \int_\Omega&  \chi_y (h \p_x h \p_y \phi_h ) \phi_h dV \\
  & = \int_{-r}^r \int_{r}^{\alpha(x)} \chi_y (h \p_y h \p_x \phi_h )
  \phi_h dy dx\\
  & = - \int_{-r}^r \int_r^{\alpha(x)}( h \p_x \phi_h) (h \chi_{yy}
  \phi_h + \chi_y h \p_y \phi_h ) dy dx \\
  & \quad + \int_{-r}^r h \chi_y ( h \p_x \phi_h) \phi_h
  |_{-r}^{\alpha(x)} dx.
\end{align*}
For the boundary term, the support properties of $\chi$ means
\[
\int_{-r}^r h \chi_y ( h \p_x \phi_h) \phi_h
  |_{-r}^{\alpha(x)} dx = \int_{-r}^r h \chi_y ( h \p_x \phi_h) \phi_h
  (x,{\alpha(x)}) dx.
  \]
  Along the face $y = \alpha(x)$, we have $h \p_x \phi_h = \kappa^{-1}
  h \p_\tau \phi_h$, so, in tangent coordinates,
  \begin{align}
  \int_{-r}^r & h \chi_y ( h \p_x \phi_h) \phi_h
  (x,{\alpha(x)}) dx \notag \\
&  = \int h \chi_y \kappa^{-1} (h \p_\tau \phi_h)
  \phi_h dS \notag \\
  & = \frac{h^2}{2} \int \chi_y \kappa^{-1} \p_\tau | \phi_h|^2 dS
  \notag \\
  & = - \frac{h^2}{2} \int (\p_\tau \chi_y \kappa^{-1}) | \phi_h |^2
  dS . \label{E:bdy-h-Sobolev-1}
  \end{align}
  The function $\p_\tau \chi_y \kappa^{-1} = \O(h^{-1/2})$, so, using
  the standard $h$-Sobolev estimates,
  \begin{align}
& \left| \frac{h^2}{2} \int (\p_\tau \chi_y \kappa^{-1}) | \phi_h |^2
    dS \right| \notag \\
    & \quad = \O(h^{1/2}) \int_\Omega (| h \nabla \phi_h |^2 + | \phi_h |^2 dV)
    \notag \\
    & \quad = \O(h^{1/2}). \label{E:bdy-h-Sobolev-2}
  \end{align}
  This implies
  \[
  \int_\Omega  \chi_y (h \p_x h \p_y \phi_h ) \phi_h dV = \O(1),
  \]
so that  \eqref{E:first-comm} becomes
  \begin{align*}
      \int_{\Omega} & ([-h^2 \Delta -1, \chi \p_x ] \phi_h ) \phi_h dV
  \\
   & = -2 \int_\Omega (\chi_x h^2 \p_x^2 \phi_h ) \phi_h dV + \O(1).
  \end{align*}
  

Since supp \, $\chi_x \subset \{ (x,y); \beta(y) < x < r,  |y| < r \},$  by an integration by parts, 
\begin{align}
  -2& \int_\Omega (\chi_x h^2 \p_x^2 \phi_h ) \phi_h dV \notag \\
  & = -2 \int_{-r}^r \int_{\beta(y)}^r (\chi_x h^2 \p_x^2
  \phi_h ) \phi_h dx dy \notag \\
  & = 2\int_{-r}^r \int_{\beta(y)}^r (\chi_x h \p_x
  \phi_h ) h \p_x \phi_h dx dy \notag \\
  & \quad +  2\int_{-r}^r \int_{\beta(y)}^r h (\chi_{xx} h \p_x
  \phi_h )  \phi_h dx dy \notag \\
  & \quad - 2 \int_{-r}^r \left( (h\chi_x h \p_x \phi_h)
  \phi_h\right) |^r_{\beta(y)} dy \notag \\
  & = 2\int_{-r}^r \int_{\beta(y)}^r \chi_x |h \p_x
  \phi_h|^2 dx dy\notag  \\
  & \quad - 2 \int_{-r}^r \left( h(\chi_x h \p_x \phi_h)
  \phi_h\right) |_{\beta(y)}^r dy + \O(1), \label{E:boundaryI1}
\end{align}
where we have again used that $\chi_{xx} = \O(h^{-1})$.
Unfortunately, as $\chi_x = \O(h^{-1/2})$, the boundary term is not
necessarily bounded  in the Neumann case. 

 However, we will see that the largest part miraculously cancels with
a similar boundary term when we run a similar argument for a vector field in
the $\p_y$ direction.  Let
\[
I_1 = - 2 \int_{-r}^r \left( (h\chi_x h \p_x \phi_h)
\phi_h\right) |_{(\beta(y),y)}^r dy
\]
be the boundary term from \eqref{E:boundaryI1}.  Using the support
properties of $\chi_x$, we have $\chi_x ( r , y) = 0$, so that
\[
I_1  = 2 \int_{-r}^r \left( (h\chi_x h \p_x \phi_h)
\phi_h\right) (\beta(y), y) dy.
\]
We now change variables $y = \alpha(x)$ so that
\begin{equation}
  \label{E:bdyx1}
I_1 = 2 \int_{-r}^r \left( (  h  \chi_x h \p_x \phi_h)
\phi_h\right) (x, \alpha(x)) \alpha' dx.
\end{equation}
We will return to this shortly.

Consider now the function
\begin{equation} \label{rho}
\rho(x,y) := \alpha'(x) \tchi ( \beta(y)/ h^{1/2}) \tpsi (x/\epsilon) \tpsi(
y/\epsilon). \end{equation}\

We have
\[
[-h^2 \Delta -1, \rho \p_y ] = -2 \rho_y h^2 \p_y^2 - h \rho_{yy} h
\p_y - 2 \rho_{x} h \p_y h \p_x - h\rho_{xx} h \p_y.
\]
Again, since $\rho_{yy} = \O(h^{-1})$ and $\rho_x$ and $\rho_{xx}$ are
bounded, we have
\[
\int_\Omega ([-h^2 \Delta -1 , \rho \p_y] \phi_h ) \phi_h dV = -2
\int_\Omega ( \rho_y h^2 \p_y^2 \phi_h) \phi_h dV + \O(1).
\]
We again integrate by parts, but  now  in the $y$ direction.  We have
\begin{align}
  -2 &
  \int_\Omega ( \rho_y h^2 \p_y^2 \phi_h) \phi_h dV \notag \\
  & = -2 \int_{-r}^r \int_{-r}^{\alpha(x)} (\rho_y h^2
  \p_y^2 \phi_h ) \phi_h dy dx \notag \\
  & = 2\int_{-r}^r \int_{-r}^{\alpha(x)} \rho_y| h
  \p_y \phi_h |^2 dy dx \notag \\
  & \quad + 2
\int_{-r}^r \int_{-r}^{\alpha(x)} (h \rho_{yy} h
\p_y \phi_h ) \phi_h dy dx
\notag \\
& \quad -2 \int_{-r}^r \left( (h \rho_y h \p_y \phi_h)
\phi_h \right) |_{r}^{\alpha(x)}dx \notag \\
& = 2\int_{-r}^r \int_{-r}^{\alpha(x)} (\rho_y h
  \p_y \phi_h ) h \p_y \phi_h dy dx \notag \\
& \quad -2 \int_{-r}^r \left( (h \rho_y h \p_y \phi_h)
  \phi_h \right) (x,\alpha(x)) dx + \O(1). \label{E:boundaryI2}
\end{align}
Here we have again used that $\rho_{yy} = \O(h^{-1})$ and that
$\rho_y(x , -r) = 0$.

Now let
\begin{equation}
  \label{E:I2-1}
I_2 = -2 \int_{-r}^r \left( (h \rho_y h \p_y \phi_h)
\phi_h \right) (x,\alpha(x)) dx
\end{equation}
be the boundary term from \eqref{E:boundaryI2}.  We observe that
\begin{align*}
  \rho_y & = \alpha'(x) \beta'(y) h^{-1/2} \tchi'(\beta(y)/h^{1/2})
  \tpsi(x/\epsilon) \tpsi(y/\epsilon) + \O(1).
\end{align*}
In \eqref{E:boundaryI2}, we are evaluating at $y = \alpha(x)$, so we
get 
\begin{align}
\rho_y ( x, \alpha(x)) & =   \alpha'(x) \beta'(\alpha(x)) h^{-1/2} \tchi'(x/h^{1/2})
\tpsi(x/\epsilon) \tpsi(\alpha(x)/\epsilon) + \O(1) \notag \\
& = \chi_x (x , \alpha(x)) + \O(1) \label{E:rhoy-chix}
\end{align}
with $\chi$ as in \eqref{E:chidef}.  Substituting into
\eqref{E:I2-1}, we have
\[
I_2 = 
-2 \int_{-r}^r \left( (h \chi_x h \p_y \phi_h)
\phi_h \right) (x,\alpha(x)) dx + \O(1).
\]

We now use the Neumann boundary conditions.  We have 
\begin{align}   0 & = \p_\nu \phi_h(x , \alpha(x)) &
  \notag \\
  & = -\frac{\alpha'}{\kappa} \p_x \phi_h ( x, \alpha(x)) +
  \frac{1}{\kappa} \p_y \phi_h(x, \alpha(x)) \label{E:px-py}
\end{align}
so that $\alpha' \p_x \phi_h (x, \alpha(x)) = \p_y \phi_h ( x ,
\alpha(x)).$  Substituting into \eqref{E:bdyx1}, we have

\begin{align} \label{display1}
  I_1 + I_2 & = \nonumber
2 \int_{-r}^r \left( (h\chi_x h \p_x \phi_h)
\phi_h\right) (x, \alpha(x)) \alpha' dx \\ \nonumber
& \quad 
-2 \int_{-r}^r \left( (h \rho_y h \p_y \phi_h)
\phi_h \right) (x,\alpha(x)) dx  \\ \nonumber
& = 
2 \int_{-r}^r \left( (  h \chi_x h \p_x \phi_h)
\phi_h\right) (x, \alpha(x)) \alpha' dx \\ \nonumber
& \quad 
-2 \int_{-r}^r \left( (h \chi_x h \p_y \phi_h)
\phi_h \right) (x,\alpha(x)) dx + \O(1) \\
& = \O(1).
\end{align}


Summing \eqref{E:boundaryI1} and \eqref{E:boundaryI2} we have
\begin{align*}
  \int_\Omega & ([-h^2 \Delta -1, \chi \p_x ] \phi_h) \phi_h dV 
  \\
  & \quad \int_\Omega  ([-h^2 \Delta -1, \rho \p_y ] \phi_h) \phi_h dV \\
  & = 2\int_\Omega \chi_x | h \p_x \phi_h |^2 dV + 2 \int_\Omega
   \rho_y | h \p_y \phi_h |^2 dV + \O(1).
  \end{align*}
From \eqref{E:chixsupp} we have
\[
\chi_x \geq  h^{-1/2} \gamma(x/h^{1/2}) \gamma(y/h^{1/2}) -
\O(1),
\]
and similarly
\[
\rho_y \geq   h^{-1/2} \gamma(x/h^{1/2}) \gamma(y/h^{1/2}) -
\O(1).
\]
Hence
\begin{align} 
    \int_\Omega & ([-h^2 \Delta -1, \chi \p_x ] \phi_h) \phi_h dV 
  \notag \\
  & \quad + \int_\Omega  ([-h^2 \Delta -1, \rho \p_y ] \phi_h) \phi_h
  dV \notag  \\
  & \geq \int_\Omega  h^{-1/2} \gamma(x/h^{1/2})
  \gamma(y/h^{1/2}) ( | h \p_x \phi_h|^2 + | h \p_y \phi_h |^2) dV -
  \O(1) \notag \\
  & = \int_\Omega  h^{-1/2} \gamma(x/h^{1/2})
  \gamma(y/h^{1/2}) ( - h^2 \p_x^2 \phi_h - h^2 \p_y^2 \phi_h )\phi_h
  dV \notag \\
  & \quad + h\int_{\p \Omega} h^{-1/2} \gamma(x/h^{1/2})
  \gamma(y/h^{1/2}) (h \p_\nu \phi_h) \phi_h dS
  -
  \O(1) 
 \notag  \\
  & = \int_\Omega  h^{-1/2} \gamma(x/h^{1/2})
  \gamma(y/h^{1/2}) | \phi_h|^2 
  dV - \O(1), \label{E:comm-sum-1}
\end{align}
where,  in the last line of  (\ref{E:comm-sum-1}),  we have used the eigenfunction equation and the Neumann boundary
conditions.  Since
\begin{equation*}
\int_\Omega  h^{-1/2} \gamma(x/h^{1/2})
  \gamma(y/h^{1/2}) | \phi_h|^2 
  dV \geq \quarter \int_{B(p_0 , h^{1/2})}  h^{-1/2}  | \phi_h|^2 
  dV,
  \end{equation*}
  we have
  \begin{align}
    \quarter \int_{B(p_0 , h^{1/2})}  h^{-1/2}  | \phi_h|^2 
    dV
    & \leq 
\int_\Omega  ([-h^2 \Delta -1, \chi \p_x ] \phi_h) \phi_h dV 
  \notag \\
  & \quad + \int_\Omega  ([-h^2 \Delta -1, \rho \p_y ] \phi_h) \phi_h dV
  + \O(1). \label{E:Oh-half}
  \end{align}

  Expanding the commutator, using the eigenfunction equation, and integrating by parts, we have
  \begin{align}
    \int_\Omega  & ([-h^2 \Delta -1, \chi \p_x ] \phi_h) \phi_h dV
    \notag \\
    & = \int_\Omega ((-h^2 \Delta -1) \chi \p_x \phi_h) \phi_h dV  -
    \int_\Omega (\chi \p_x (-h^2 \Delta -1)  \phi_h) \phi_h dV \notag \\
    & = \int_\Omega (\chi \p_x \phi_h ) ((-h^2 \Delta -1) \phi_h) dV
     - \int_{\p \Omega} (h \p_\nu \chi h \p_x \phi_h ) \phi_h dS
    \notag \\
    & \quad + \int_{\p \Omega} ( \chi h \p_x \phi_h) ( h \p_\nu
    \phi_h) dS \notag \\
    & = - \int_{\p \Omega} (h \p_\nu \chi h \p_x \phi_h ) \phi_h dS.\label{E:comm-exp-111}
  \end{align}
  Using \eqref{E:nuxy-1}, \eqref{E:nuxy-2}, and the Neumann boundary
  conditions, we have
  \begin{align}
   h \p_\nu \chi h \p_x \phi_h & = \left( -\frac{\alpha'}{\kappa} h\p_x
   + \frac{1}{\kappa} h\p_y \right) \chi h \p_x \phi_h \notag \\
   & = \left( - \frac{ \alpha'}{\kappa} h \chi_x + \frac{1}{\kappa} h
   \chi_y \right)h \p_x \phi_h  \notag \\
   & \quad + \chi h \p_\nu h \p_x \phi_h \notag \\
   & = \left( - \frac{ \alpha'}{\kappa} h \chi_x + \frac{1}{\kappa} h
   \chi_y \right)\left( \frac{1}{\kappa}\right) h \p_\tau \phi_h\notag
   \\
   & \quad - \chi \frac{ \alpha'}{\kappa} h^2 \p_\nu^2 \phi_h + \O(h)
   h \p_\tau \phi_h \notag \\
   & = - \frac{ \alpha'}{\kappa^2} h \chi_xh \p_\tau \phi_h- \chi \frac{
     \alpha'}{\kappa} h^2 \p_\nu^2 \phi_h + \O(h) h \p_\tau \phi_h. \label{E:boundary-102}
  \end{align}
  \begin{remark}
Here is where we see that dealing with the boundary terms for the
Neumann eigenfunctions is 
significantly more difficult than in the case of Dirichlet
eigenfunctions.  Indeed, in the easier case of Dirichlet eigenfunctions, since $\Omega$ is convex, $\int_{\p \Omega} |
h \p_\nu \phi_h |^2 dS$ is bounded and the integrand has a sign. 

    \end{remark}

  Plugging
\eqref{E:boundary-102} into \eqref{E:comm-exp-111}    and using
integration by parts and Sobolev embedding for the $O(h)$ terms
as we did in \eqref{E:bdy-h-Sobolev-1}-\eqref{E:bdy-h-Sobolev-2},
 we have
  \begin{align*}
       \int_\Omega  & ([-h^2 \Delta -1, \chi \p_x ] \phi_h) \phi_h dV
       \\
       & = \int_{\p \Omega} \left(\chi \frac{
         \alpha'}{\kappa} h^2 \p_\nu^2 \phi_h \right) \phi_h dS \\
       & \quad + \int_{\p \Omega} \left( \frac{ \alpha'}{\kappa^2} h
       \chi_xh \p_\tau \phi_h \right) \phi_h dS + \O(1).
  \end{align*}

  A similar computation gives
  \begin{align*}
    \int_\Omega & ([-h^2 \Delta -1, \rho \p_y ] \phi_h ) \phi_h dV \\
    & = - \int_{\p \Omega} \left( \rho \frac{1}{\kappa} h^2 \p_\nu^2
    \phi_h \right) \phi_h dS \\
    & \quad - \int_{\p \Omega} \left( \frac{\alpha'}{\kappa^2} h\rho_y h
    \p_\tau \phi_h \right)\phi_h dS  + \O(1).
  \end{align*}
  Recalling that
  \[
  \rho(x, \alpha(x)) = \alpha' \chi(x, \alpha(x))
  \]
  and
  \[
  \rho_y(x, \alpha(x)) = \chi_x (x , \alpha(x)) + \O(1),
  \]
  we sum:
  \begin{align}
    \int_\Omega  & ([-h^2 \Delta -1, \chi \p_x ] \phi_h) \phi_h dV
    \notag \\
    & \quad + \int_\Omega ([-h^2 \Delta -1, \rho \p_y ] \phi_h )
    \phi_h dV \notag \\
    & = \int_{\p \Omega} \left(\chi \frac{
         \alpha'}{\kappa} h^2 \p_\nu^2 \phi_h \right) \phi_h dS
    \notag \\
       & \quad + \int_{\p \Omega} \left( \frac{ \alpha'}{\kappa^2} h
    \chi_xh \p_\tau \phi_h \right) \phi_h dS \notag \\
    & \quad - \int_{\p \Omega} \left( \rho \frac{1}{\kappa} h^2 \p_\nu^2
    \phi_h \right) \phi_h dS \notag \\
    & \quad - \int_{\p \Omega} \left( \frac{\alpha'}{\kappa^2} h\rho_y h
    \p_\tau \phi_h \right)\phi_h dS  + \O(1) \notag \\
    & = \O(1), \label{E:O1}
  \end{align}
  since the displayed terms on the RHS of (\ref{E:O1}) all cancel.

  Finally, equating \eqref{E:Oh-half} with \eqref{E:O1}, we get
  \[
  \int_{B(p_0 , h^{1/2})}  h^{-1/2}  | \phi_h|^2 
    dV = \O(1)
    \]
    as asserted.

\subsubsection{   Analysis near corner points }  We now consider the case where $p_0$ is a corner.
Translate and
rotate so that $p_0 = (0,0)$, and $\p \Omega$ locally has two smooth
sections.  That is, after a rotation and translation, there exist  locally  smooth functions $\alpha_1$ and
$\alpha_2$ such that $\alpha_1$ is monotone increasing, $\alpha_2$ is
monotone decreasing, $\alpha_1'(0) >0$, and $ \alpha_2'(0) <0$,
and  near $(0,0)$ 
\[
\p \Omega = \{ y = \alpha_1(x) ; 0 \leq x \leq \eta \} \cup \{ y = 
\alpha_2(x) ; 0 \leq x \leq \eta \}
\]
for some $\eta>0$ independent of $h$.  We assume further that locally $\Omega$ lies to
the right of these sections (this is automatic due to convexity of
$\Omega$).
Then locally each $\alpha_j$ has an inverse, which we denote
$\beta_j$.  That is, near $(0,0)$, $y = \alpha_j(x) \iff x =
\beta_j (y)$.

We will need to know the tangential and normal derivatives in these
coordinates.  For the top section where $y = \alpha_1(x)$, we have
already computed in \eqref{E:nuxy-1} and \eqref{E:nuxy-2} with
$\alpha$ replaced by $\alpha_1$.  For the
bottom section where $y = \alpha_2(x)$, let $\kappa_2 = (1 +
(\alpha_2')^2)^\half$ so that the tangent is $\tau = \kappa_2^{-1} ( 1
, \alpha_2')$.  Recalling that $\alpha_2' < 0$ near $0$, the outward
unit normal then is $\nu = \kappa_2^{-1}(\alpha_2' , -1 )$.  Hence
\begin{equation}
  \label{E:nuxy-3}
\p_\nu = \frac{\alpha_2'}{\kappa_2} \p_x - \frac{1}{\kappa_2} \p_y , \,\,\,
\p_\tau = \frac{1}{\kappa_2} \p_x + \frac{\alpha_2'}{\kappa_2} \p_y
\end{equation}
so that
\begin{equation}
  \label{E:nuxy-4}
\p_x = \frac{1}{\kappa_2} \p_\tau + \frac{\alpha_2'}{\kappa_2} \p_\nu ,
\,\,\, \p_y = \frac{\alpha_2'}{\kappa_2} \p_\tau - \frac{1}{\kappa_2} \p_\nu.
\end{equation}

For $\epsilon>0$ sufficiently small, let $\chi(x,y)$ be the same as in
\eqref{E:chidef}.  We again use a parameter $r \gg \epsilon$ but
sufficiently small that $[-r, r]^2$ does not meet any other
corners.  Again, this is just to ease notation in our integral
expressions.  
Applying the same commutator argument as in the smooth boundary segment case, the
interior computations are the same, we just need to check what happens
on the boundary.  The key difference from the  case with no corners is
that boundary integrals have to be considered piecewise.  See Figure
\ref{F:corner-chi} for a picture of the setup.

   \begin{figure}
\hfill
\centerline{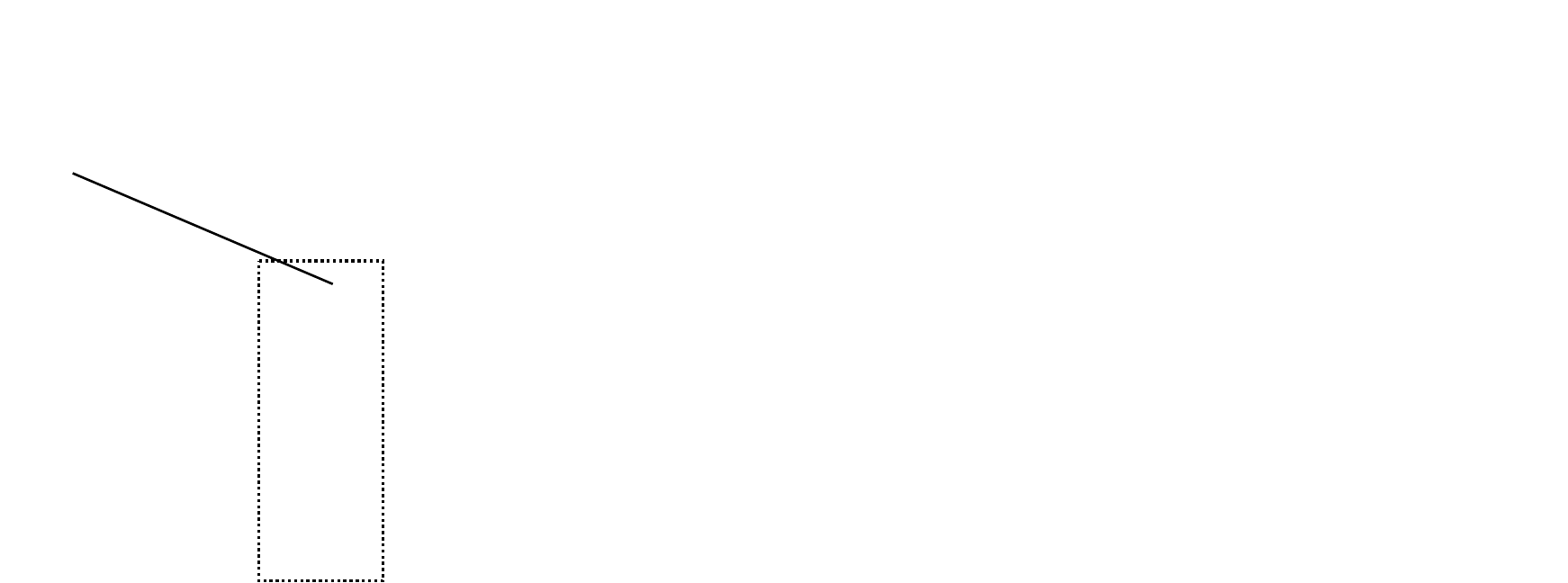}
\caption{\label{F:corner-chi} $\Omega$ in a neighbourhood of a corner
  and the functions $\chi$, $\rho_1$, and $\rho_2$.}
\hfill
\end{figure}

Integrating
by parts in the $x$ direction on the interior terms, we have:
\begin{align*}
 \int_\Omega  ([-h^2 \Delta -1, \chi \p_x ] \phi_h ) \phi_h dV & =  -2
 \int_\Omega  \chi_x (h^2 \p_x^2 \phi_h ) \phi_h dV + \O(1) \\
& = -2 \int_{-r}^0 \int_{x = \beta_2(y)}^r \chi_x (h^2 \p_x^2
  \phi_h ) \phi_h dx dy \\
  & \quad  -2 \int_0^r \int_{x = \beta_1(y)}^r \chi_x (h^2 \p_x^2
  \phi_h ) \phi_h dx dy  + \O(1) \\
  & = : I_1 + I_2 + \O(1).
  \end{align*}
Let us examine $I_1$ first:
\begin{align*}
  I_1 & = 2 \int_{-r}^0 \int_{\beta_2(y)}^r h \chi_{xx} (h
  \p_x \phi_h ) \phi_h dx dy + 2 \int_{-r}^0 \int_{\beta_2(y)}^r  \chi_{x} (|h
  \p_x \phi_h |^2 )dx dy \\
  & \quad -2 \int_{y = -r}^0 h \chi_x (h \p_x \phi_h ) \phi_h
  |_{x = \beta_2(y)}^{x = r} dy \\
  & = 
  2 \int_{-r}^0 \int_{\beta_2(y)}^r  \chi_{x} |h
  \p_x \phi_h |^2 dx dy \\
  & \quad +2 \int_{y = -r}^0 (h \chi_x (h \p_x \phi_h )
  \phi_h)(\beta_2(y), y) dy + \O(1),
\end{align*}
since $\chi$ has support in $x \leq 2 \epsilon \ll r$, and $h
\chi_{xx} = \O(1)$.

Similarly,
\begin{align*}
I_2 & = 
  2 \int_0^r \int_{ \beta_1(y)}^r  \chi_{x} (|h
  \p_x \phi_h |^2 dx dy \\
  & \quad +2 \int_0^r (h \chi_x (h \p_x \phi_h )
  \phi_h)(\beta_1(y), y) dy + \O(1).
\end{align*}
Summing, we have
\begin{align*}
  \int_{\Omega} & ([-h^2 \Delta -1, \chi \p_x ] \phi_h ) \phi_h dV \\
  & = I_1 + I_2 + \O(1)  \\
  & = 
  2  \int_\Omega \chi_{x} (|h
  \p_x \phi_h |^2 dx dy 
  \\
  & \quad +2 \int_{y = -r}^0 (h \chi_x (h \p_x \phi_h )
  \phi_h)(\beta_2(y), y) dy \\
  & \quad +2 \int_{y = 0}^r (h \chi_x (h \p_x \phi_h )
  \phi_h)(\beta_1(y), y) dy + \O(1).
  \end{align*}
For the two boundary terms, we change variables $y = \alpha_2(x)$ and $y
= \alpha_1(x)$ respectively to get
\begin{align}
   \int_{\Omega} & ([-h^2 \Delta -1, \chi \p_x ] \phi_h ) \phi_h dV
   \notag \\
& = 
  2  \int_\Omega \chi_{x} (|h
  \p_x \phi_h |^2 dx dy -2  \int_0^r  \alpha_2'(x)(h \chi_x (h \p_x \phi_h )
  \phi_h)(x, \alpha_2(x)) dx \notag \\
  & \quad +2  \int_0^r \alpha_1'(x) (h \chi_x (h \p_x \phi_h )
  \phi_h)(x , \alpha_1(x)) dx + \O(1). \label{E:corner-bdy-x-1}
  \end{align}
Note the sign change on the second integral to correct for reversed
orientation in the $x$ direction.

We now want to employ a  similar argument with $\p_y$.  However, our
function $\rho$ cannot be globally defined if we want to write $\rho$
in terms of $\chi$ on the boundary, since we are not assuming any
relation between $\alpha_1$ and $\alpha_2$.  Let
\[
\Omega_1 = \Omega \cap \{ x \leq r \} \cap \{ y \geq 0 \},
\]
and
\[
\Omega_2 = \Omega \cap \{ x \leq r \} \cap \{ y \leq 0 \}
\]
be the top and bottom parts of $\Omega$ near $(0,0)$.  For $j = 1,2$, let

\begin{equation} \label{rhoj}
\rho_j(x,y) := \alpha_j'(x) \tchi( \beta_j(y)/h^\half) \tpsi
(x/\epsilon) \tpsi (y/ \epsilon), \quad j=1,2.
\end{equation}\

See Figure \ref{F:corner-chi} for a picture of the setup.

Let us record some facts about the $\rho_j$'s.  First, along $y =
\alpha_j$, $j = 1,2$, we have
\begin{equation}
  \label{E:rho-y-j}
\rho_j ( x , \alpha_j(x)) = \alpha_j'(x) \tchi(x/h^\half)
\tpsi(x/\epsilon) \tpsi(\alpha_j(x)/\epsilon) = \alpha_j' \chi(x,
\alpha_j(x)).
\end{equation}
Along $y = 0$, $\rho_j = 0$, since $\tchi$ is an odd function and
$\beta_j(0) = 0$ for $j = 1,2$.  Along
$y = \alpha_j$,
\begin{align*}
\p_y \rho_j(x, \alpha_j(x)) & = h^{-\half} \alpha_j' (x)
\beta_j'(\alpha_j(x)) \tchi'(x/h^\half)
\tpsi(x/\epsilon)\tpsi(\alpha_j(x)/\epsilon) + \O(1) \\
& =  h^{-\half} \tchi'(x/h^\half)
\tpsi(x/\epsilon)\tpsi(\alpha_j(x)/\epsilon) + \O(1) \\
& = \p_x \chi(x, \alpha_j(x)) + \O(1).
\end{align*}
Here the implicit $\O(1)$ errors are from differentiating the $\tpsi$
functions so are supported away from the corners.  Finally, along $y =
0$, we have $\tchi'(0) = 1/2$ and $\tpsi'(0) = 0$, so that
\begin{align}
  \p_y \rho_j(x,0) &= h^{-\half} \tchi'(0) \tpsi(x/\epsilon) \tpsi(0) \\
  & = (h^{-1/2}/2) \tpsi(x/\epsilon).\label{E:rho-y-0}
  \end{align}

Now consider the vector field $\rho_1 \p_y$ on $\Omega_1$.   The same commutator computation and integrations by
parts in $y$ yields the following:
\begin{align}
  \int_{\Omega_1} & ([-h^2 \Delta -1, \rho_1 \p_y ] \phi_h ) \phi_h
  dV \notag \\
  & = 
  -2 \int_{\Omega_1} \rho_{1,y} (h^2 \p_y^2 \phi_h ) \phi_h dV  + \O(1) \notag \\
  & = -2 \int_{x = 0}^r \int_{y =0 }^{y = \alpha_1(x)}
  \rho_{1,y} (h^2 \p_y^2 \phi_h ) \phi_h dy dx + \O(1)\notag \\
  & = 
  2 \int_{x = 0}^r \int_{y = 0}^{y = \alpha_1(x)}
  \rho_{1,y} |h \p_y \phi_h |^2 dy dx \notag  - 2 \int_{x = 0}^r h \rho_{1,y} (h \p_y \phi_h ) \phi_h
  |_{y = 0}^{y = \alpha_1(x)} dx + \O(1) \notag \\
  & = 
  2 \int_{\Omega_1}
  \rho_{1,y} |h \p_y \phi_h |^2 dy dx  - 2 \int_{x = 0}^r (h \rho_{1,y} (h \p_y \phi_h ) \phi_h)(x,
  \alpha_1(x)) dx \notag
  \\
  & \quad + 2 \int_{x = 0}^r (h \rho_{1,y} (h \p_y \phi_h ) \phi_h)(x,
  0) dx + \O(1) \notag \\
  & = 2 \int_{\Omega_1}
  \rho_{1,y} |h \p_y \phi_h |^2 dy dx \notag \\
  & \quad - 2 \int_{x = 0}^r (h (\chi_x+\O(1)) (h \p_y \phi_h ) \phi_h)(x,
  \alpha_1(x)) dx \notag 
  \\
  & \quad + 2 \int_{x = 0}^r(h^{-1/2}/2) \tpsi(x/\epsilon) (h  (h \p_y \phi_h ) \phi_h)(x,
  0) dx + \O(1) \notag \\
& = 2 \int_{\Omega_1}
  \rho_{1,y} |h \p_y \phi_h |^2 dV  - 2 \int_{x = 0}^r (h \chi_x (h \p_y \phi_h ) \phi_h)(x,
  \alpha_1(x)) dx \notag \\
  & \quad + h^\half\int_{x = 0}^r \tpsi(x/\epsilon)   (h \p_y \phi_h ) \phi_h(x,
  0) dx + \O(1)
  . \label{E:corner-bdy-y-1}
\end{align}

Here we have used integration by parts along the boundary and Sobolev embedding on the implicit $\O(h)$ boundary
terms supported away from the corner, just as in \eqref{E:bdy-h-Sobolev-1}-\eqref{E:bdy-h-Sobolev-2}.

A similar computation on $\Omega_2$ using the vector field $\rho_2
\p_y$ gives
\begin{align}
   \int_{\Omega_2} & ([-h^2 \Delta -1, \rho_2 \p_y ] \phi_h ) \phi_h
  dV \notag \\
  & = 
  -2 \int_{\Omega_2} \rho_{2,y} (h^2 \p_y^2 \phi_h ) \phi_h dV  + \O(1) \notag \\
  & = -2 \int_{x = 0}^r \int_{y =\alpha_2 }^{y = 0}
  \rho_{2,y} (h^2 \p_y^2 \phi_h ) \phi_h dy dx + \O(1)\notag \\
  & = 
  2 \int_{x = 0}^r \int_{y = \alpha_2}^{y = 0}
  \rho_{2,y} |h \p_y \phi_h |^2 dy dx  - 2 \int_{x = 0}^r h \rho_{1,y} (h \p_y \phi_h ) \phi_h
  |_{y = \alpha_2}^{y = 0} dx + \O(1) \notag \\
  & =  2 \int_{\Omega_2}
  \rho_{2,y} |h \p_y \phi_h |^2 dV  - h^\half \int_{x = 0}^r \tpsi(x/\epsilon) (h \p_y \phi_h)
  \phi_h (x,0) dx \notag \\
  & \quad + 2 \int_{x = 0}^r h \chi_x ( h \p_y \phi_h) \phi_h (x
  , \alpha_2(x)) dx + \O(1).
  \label{E:corner-bdy-y-2}
  \end{align}

Summing \eqref{E:corner-bdy-y-1} and \eqref{E:corner-bdy-y-2} and
making the obvious cancellations, we have
\begin{align}
  \int_{\Omega_1} & ([-h^2 \Delta -1, \rho_1 \p_y ] \phi_h) \phi_h dV
  + \int_{\Omega_2}  ([-h^2 \Delta -1, \rho_2 \p_y ] \phi_h) \phi_h
  dV \notag \\
  & = 2 \int_{\Omega_1}
  \rho_{1,y} |h \p_y \phi_h |^2 dV + 2 \int_{\Omega_2}
  \rho_{2,y} |h \p_y \phi_h |^2 dV \notag  \\
  & \quad  - 2 \int_{x = 0}^r (h \chi_x (h \p_y \phi_h ) \phi_h)(x,
  \alpha_1(x)) dx \notag \\
  & \quad + 2 \int_{x = 0}^r h \chi_x ( h \p_y \phi_h) \phi_h (x
  , \alpha_2(x)) dx + \O(1). \label{E:corner-y-both}
  \end{align}

We now use the Neumann boundary conditions on $\phi_h$ and sum
\eqref{E:corner-bdy-x-1} and \eqref{E:corner-y-both}.
On the top
segment where $y \geq 0$, we have \eqref{E:nuxy-1} and
\eqref{E:nuxy-2} so 
\[
0 = \p_\nu \phi_h = -\frac{\alpha_1'}{\kappa_1} \p_x \phi_h +
\frac{1}{\kappa_1} \p_y \phi_h.
\]
Then $\p_y \phi_h = \alpha_1' \p_x \phi_h$ on the upper section.
Similarly, on the bottom section we have \eqref{E:nuxy-3} and
\eqref{E:nuxy-4} so that $\p_y \phi_h =  \alpha_2' \p_x
\phi_h$.  Consequently, \eqref{E:corner-y-both} becomes
\begin{align}
 \int_{\Omega_1} & ([-h^2 \Delta -1, \rho_1 \p_y ] \phi_h) \phi_h dV
  + \int_{\Omega_2}  ([-h^2 \Delta -1, \rho_2 \p_y ] \phi_h) \phi_h
  dV \notag \\
  & = 2 \int_{\Omega_1}
  \rho_{1,y} |h \p_y \phi_h |^2 dV + 2 \int_{\Omega_2}
  \rho_{2,y} |h \p_y \phi_h |^2 dV \notag  \\
  & \quad  - 2 \int_{x = 0}^r (h \chi_x (\alpha_1' h \p_x \phi_h ) \phi_h)(x,
  \alpha_1(x)) dx \notag \\
  & \quad + 2 \int_{x = 0}^r h \chi_x (\alpha_2' h \p_x \phi_h) \phi_h (x
  , \alpha_2(x)) dx + \O(1). \label{E:corner-y-both-2}
  \end{align}
Now summing \eqref{E:corner-bdy-x-1} and \eqref{E:corner-y-both-2} and
making the obvious cancellations, we
have
\begin{align}
   \int_{\Omega} & ([-h^2 \Delta -1, \chi \p_x ] \phi_h ) \phi_h dV
   \notag \\
   &\quad + \int_{\Omega_1}  ([-h^2 \Delta -1, \rho_1 \p_y ] \phi_h) \phi_h dV
  + \int_{\Omega_2}  ([-h^2 \Delta -1, \rho_2 \p_y ] \phi_h) \phi_h
  dV \notag \\
  & = 2  \int_\Omega \chi_{x} (|h
  \p_x \phi_h |^2 dV + 
  2 \int_{\Omega_1}
  \rho_{1,y} |h \p_y \phi_h |^2 dV + 2 \int_{\Omega_2}
  \rho_{2,y} |h \p_y \phi_h |^2 dV  + \O(1).  \label{E:comm-master-1}
  \end{align}

It remains to compute the commutators  on the LHS of (\ref{E:comm-master-1}). By Green's formula,
\[
\int_\Omega ([-h^2 \Delta -1, \chi(x,y) \p_x ] \phi_h) \phi_h dV = -
\int_{\p \Omega} (h \p_\nu \chi h \p_x \phi_h ) \phi_h dS
\]
and for $j = 1,2$
\begin{align*}
\int_{\Omega_j} &([-h^2 \Delta -1, \rho_j(x,y) \p_y ] \phi_h) \phi_h
dV \\
& = -
\int_{\p \Omega_j} (h \p_\nu \rho_j h \p_y \phi_h ) \phi_h dS +
\int_{\p \Omega_j} (\rho_j h \p_y \phi_h )(h \p_\nu \phi_h) dS \\
& = -\int_{\p \Omega_j} (h \p_\nu \rho_j h \p_y \phi_h ) \phi_h dS.
\end{align*}

Here the second integral in the second line is zero since $\phi_h$ has
Neumann boundary conditions along the boundary $y = \alpha_1$, and
$\rho_j = 0$ along the line $y = 0$.

On the upper segment, we use that
$
\p_\nu = -\frac{\alpha_1'}{\kappa_1} \p_x + \frac{1}{\kappa_1} \p_y , $ and 
$
\p_x = \frac{1}{\kappa_1} \p_\tau - \frac{\alpha_1'}{\kappa_1} \p_\nu$ to get
\begin{align*}
  h \p_\nu \chi h \p_x \phi_h & =  \chi h \p_x h \p_\nu \phi_h + [h
    \p_\nu , \chi h \p_x ] \phi_h \\
  & = -\frac{\alpha_1'}{\kappa_1}  \chi h^2
\p_\nu^2 \phi_h -
   \frac{\alpha_1'}{\kappa_1^2} h \chi_x
 h \p_\tau \phi_h + \O(h) h \p_\tau \phi_h .
\end{align*}
  
Similarly, on the lower segment, $
\p_\nu = \frac{\alpha_2'}{\kappa_2} \p_x - \frac{1}{\kappa_2} \p_y $ and 
$
\p_x = \frac{1}{\kappa_2} \p_\tau + \frac{\alpha_2'}{\kappa_2} \p_\nu $ so
that 
\begin{align*}
  h \p_\nu \chi h \p_x \phi_h & =\frac{\alpha_2'}{\kappa_2}  \chi h^2
\p_\nu^2 \phi_h + \frac{\alpha_2'}{\kappa_2^2}h \chi_x h \p_\tau \phi_h +
\O(h) h \p_\tau
\phi_h.
\end{align*}

Plugging in, we have
\begin{align}
  \int_\Omega & ([-h^2 \Delta -1, \chi(x,y) \p_x ] \phi_h) \phi_h dV
 \notag  \\
  &  =-
  \int_{\p \Omega} (h \p_\nu \chi h \p_x \phi_h ) \phi_h dS \notag \\
  & = - \int_{\p \Omega \cap \{ y \geq 0 \} } (-\frac{\alpha_1'}{\kappa_1}  \chi h^2
\p_\nu \phi_h -
   \frac{\alpha_1'}{\kappa_1^2} h \chi_x
   h \p_\tau \phi_h + \O(h) h \p_\tau \phi_h) \phi_h dS \notag \\
   & \quad - \int_{\p \Omega \cap \{ y \leq 0 \} } ( \frac{\alpha_2'}{\kappa_2}  \chi h^2
\p_\nu \phi_h + \frac{\alpha_2'}{\kappa_2^2} h \chi_xh \p_\tau \phi_h + \O(h)
h \p_\tau \phi_h) \phi_h dS \notag \\
& = - \int_{\p \Omega \cap \{ y \geq 0 \} } (-\frac{\alpha_1'}{\kappa_1}  \chi h^2
\p_\nu \phi_h -
   \frac{\alpha_1'}{\kappa_1^2} h \chi_x
   h \p_\tau \phi_h ) \phi_h dS \notag \\
   & \quad - \int_{\p \Omega \cap \{ y \leq 0 \} } ( \frac{\alpha_2'}{\kappa_2}  \chi h^2
\p_\nu \phi_h + \frac{\alpha_2'}{\kappa_2^2} h \chi_x h \p_\tau \phi_h )
\phi_h dS + \O(1), \label{E:chi-IBP-1}
\end{align}
where we have again used integration by parts along the boundary and Sobolev embedding on the implicit $\O(h)$
boundary terms supported away from the corner, just as we did in \eqref{E:bdy-h-Sobolev-1}-\eqref{E:bdy-h-Sobolev-2}.

For the computations involving the vector fields $\rho_j \p_y$, we
have  by Green's formula 
\begin{align*}
  \int_{\Omega_j}&  ([-h^2 \Delta -1, \rho_j \p_y ] \phi_h ) \phi_h dV
  \\
  & = - \int_{\p \Omega_j} (h \p_\nu \rho_jh \p_y \phi_h) \phi_h dS
  \\
  & = - \int_{\{ y = \alpha_1(x)\}} (h \p_\nu \rho_jh \p_y \phi_h)
  \phi_h dS - \int_{\{ y = 0 \}} (h \p_\nu \rho_jh \p_y \phi_h)
  \phi_h dS,
\end{align*}
since $\tpsi(x /\epsilon)$ has compact support in $\{ x \leq 2
\epsilon \ll r \}$.
Using the same computations  which led to \eqref{E:chi-IBP-1}, on $\{
y = \alpha_1 \}$, we have
\[
h \p_\nu \rho_1 h \p_y \phi_h = \frac{1}{\kappa_1} \rho_1 h^2 \p_\nu^2
\phi_h + \frac{\alpha_1'}{\kappa_1^2} h\rho_{1,y} h \p_\tau \phi_h +
\O(h) h \p_\tau \phi_h.
\]
On $\{ y = 0 \}$, from
$\Omega_1$, we have $\p_\nu = - \p_y$, so that
\begin{align*}
  h \p_\nu \rho_1 h \p_y \phi_h & = -h \p_y \rho_1 h \p_y \phi_h \\
  & = -h \rho_{1,y} h \p_y \phi_h - \rho_1 h^2 \p_y^2 \phi_h \\
  & = -h \rho_{1,y} h \p_y \phi_h \\
  & = -(h^\half/2) \tpsi(x/\epsilon) h \p_y \phi_h,
\end{align*}
since $\rho_1(x,0) = 0$.  In the last line we have used \eqref{E:rho-y-0}.
Putting this together, we have
\begin{align}
  \int_{\Omega_1}&  ([-h^2 \Delta -1, \rho_1 \p_y ] \phi_h ) \phi_h dV
 \notag  \\ 
  & = - \int_{\p \Omega_1} (h \p_\nu \rho_1h \p_y \phi_h) \phi_h dS
  \notag \\
  & = -\int_{\{ y = \alpha_1 \}} (h \p_\nu \rho_1 h \p_y \phi_h)
  \phi_h dS  - \int_{\{y = 0 \}} (h \p_\nu \rho_1 h \p_y \phi_h)
  \phi_h dS \notag  \\
  & = -\int_{\{ y = \alpha_1\}} ( \frac{1}{\kappa_1} \rho_1 h^2 \p_\nu^2
\phi_h + \frac{\alpha_1'}{\kappa_1^2} h\rho_{1,y} h \p_\tau \phi_h +
\O(h) h \p_\tau \phi_h) \phi_h dS \notag \\
& \quad - \int_{\{ y = 0 \}} (-h \rho_{1,y} h \p_y \phi_h) \phi_h dS
\notag \\
& = -\int_{\{ y = \alpha_1\}} ( \frac{1}{\kappa_1} \rho_1 h^2 \p_\nu^2
\phi_h + \frac{\alpha_1'}{\kappa_1^2} h\rho_{1,y} h \p_\tau \phi_h)
\phi_h dS \notag \\
& \quad - \int_{\{ y = 0 \}} (-(h^\half/2) \tpsi(x/\epsilon) h \p_y \phi_h) \phi_h dS
+ \O(1).\label{E:comm-om-1}
\end{align}
Here we have once again used the Sobolev embedding on the implicit
$\O(h)$ boundary terms just as we did in \eqref{E:bdy-h-Sobolev-1}-\eqref{E:bdy-h-Sobolev-2}.

In a similar fashion, we compute for $\Omega_2$:
\begin{align}
  \int_{\Omega_2} & ([-h^2 \Delta -1, \rho_2 \p_y ] \phi_h ) \phi_h
  dV \notag 
  \\
  & = - \int_{\p \Omega_2} (h \p_\nu \rho_2 h \p_y \phi_h) \phi_h dS
  \notag 
  \\
  & =  -\int_{\{ y = \alpha_2\}} ( -\frac{1}{\kappa_2} \rho_2 h^2 \p_\nu^2
\phi_h - \frac{\alpha_2'}{\kappa_1^2} h\rho_{2,y} h \p_\tau \phi_h)
\phi_h dS \notag \\
& \quad - \int_{\{ y = 0 \}} ((h^\half/2) \tpsi(x/\epsilon) h \p_y \phi_h) \phi_h dS
+ \O(1). \label{E:comm-om-2}
\end{align}
Here in the last line we have used that, from $\Omega_2$, $\p_\nu =
\p_y$ along $\{ y = 0 \}$.

Summing \eqref{E:comm-om-1} and \eqref{E:comm-om-2}, we have
\begin{align}
   \int_{\Omega_1} & ([-h^2 \Delta -1, \rho_1 \p_y ] \phi_h ) \phi_h
   dV
   +  \int_{\Omega_2}  ([-h^2 \Delta -1, \rho_2 \p_y ] \phi_h ) \phi_h
   dV\notag  \\
   & = 
-\int_{\{ y = \alpha_1\}} ( \frac{1}{\kappa_1} \rho_1 h^2 \p_\nu^2
\phi_h + \frac{\alpha_1'}{\kappa_1^2}h \rho_{1,y} h \p_\tau \phi_h)
\phi_h dS \notag \\
& \quad + \int_{\{ y = 0 \}} ((h^\half/2) \tpsi(x/\epsilon) h \p_y
\phi_h) \phi_h dS \notag \\
& \quad
 +\int_{\{ y = \alpha_2\}} ( \frac{1}{\kappa_2} \rho_2 h^2 \p_\nu^2
\phi_h + \frac{\alpha_2'}{\kappa_2^2}h \rho_{2,y} h \p_\tau \phi_h)
\phi_h dS \notag \\
& \quad - \int_{\{ y = 0 \}} ((h^\half/2) \tpsi(x/\epsilon) h \p_y \phi_h) \phi_h dS
+ \O(1) \notag \\
& = -\int_{\{ y = \alpha_1\}} ( \frac{1}{\kappa_1} \rho_1 h^2 \p_\nu^2
\phi_h + \frac{\alpha_1'}{\kappa_1^2} h\rho_{1,y} h \p_\tau \phi_h)
\phi_h dS \notag \\
& \quad 
 +\int_{\{ y = \alpha_2\}} ( \frac{1}{\kappa_2} \rho_2 h^2 \p_\nu^2
\phi_h + \frac{\alpha_2'}{\kappa_2^2} h\rho_{2,y} h \p_\tau \phi_h)
\phi_h dS 
+ \O(1) \label{E:comm-om-both}
\end{align}

From 
\eqref{E:rho-y-j}, we know that $$\rho_j(x, \alpha_j(x)) =\alpha_j'
\chi(x, \alpha_j(x))$$ and $$\rho_{j,y} (x, \alpha_j(x))= \chi_x( x ,
\alpha_j(x)) + \O(1),$$ so that
\eqref{E:comm-om-both} becomes
\begin{align}
     \int_{\Omega_1} & ([-h^2 \Delta -1, \rho_1 \p_y ] \phi_h ) \phi_h
   dV
   +  \int_{\Omega_2}  ([-h^2 \Delta -1, \rho_2 \p_y ] \phi_h ) \phi_h
   dV\notag  \\
&
= -\int_{\{ y = \alpha_1\}} ( \frac{1}{\kappa_1} \alpha_1' \chi h^2 \p_\nu^2
\phi_h + \frac{\alpha_1'}{\kappa_1^2} h\chi_x h \p_\tau \phi_h)
\phi_h dS \notag \\
& \quad 
 +\int_{\{ y = \alpha_2\}} ( \frac{1}{\kappa_2} \alpha_2' \chi  h^2 \p_\nu^2
\phi_h + \frac{\alpha_2'}{\kappa_2^2} h\chi_x h \p_\tau \phi_h)
\phi_h dS 
+ \O(1) \label{E:comm-om-both-2}
\end{align}

Summing \eqref{E:chi-IBP-1} and \eqref{E:comm-om-both-2}, we have
\begin{align}
  \int_\Omega & ([-h^2 \Delta -1, \chi \p_x] \phi_h) \phi_h dV \notag
  \\
  & \quad + 
      \int_{\Omega_1}  ([-h^2 \Delta -1, \rho_1 \p_y ] \phi_h ) \phi_h
   dV
   +  \int_{\Omega_2}  ([-h^2 \Delta -1, \rho_2 \p_y ] \phi_h ) \phi_h
   dV\notag \\
   & = 
 - \int_{\p \Omega \cap \{ y \geq 0 \} } (-\frac{\alpha_1'}{\kappa_1}  \chi h^2
\p_\nu \phi_h -
   \frac{\alpha_1'}{\kappa_1^2} h \chi_x
   h \p_\tau \phi_h ) \phi_h dS \notag \\
   & \quad - \int_{\p \Omega \cap \{ y \leq 0 \} } ( \frac{\alpha_2'}{\kappa_2}  \chi h^2
\p_\nu \phi_h + \frac{\alpha_2'}{\kappa_2^2} h \chi_xh \p_\tau \phi_h )
\phi_h dS \notag \\
& \quad -\int_{\{ y = \alpha_1\}} ( \frac{1}{\kappa_1} \alpha_1' \chi h^2 \p_\nu^2
\phi_h + \frac{\alpha_1'}{\kappa_1^2} h\chi_x h \p_\tau \phi_h)
\phi_h dS \notag \\
& \quad 
 +\int_{\{ y = \alpha_2\}} ( \frac{1}{\kappa_2} \alpha_2' \chi  h^2 \p_\nu^2
\phi_h + \frac{\alpha_2'}{\kappa_2^2} h\chi_x h \p_\tau \phi_h)
\phi_h dS 
+ \O(1). \label{E:master-comm-2}
\end{align}
All of the displayed boundary terms in \eqref{E:master-comm-2} cancel,
so that
\begin{align}
 \int_\Omega & ([-h^2 \Delta -1, \chi \p_x] \phi_h) \phi_h dV \notag
  \\
  & \quad + 
      \int_{\Omega_1}  ([-h^2 \Delta -1, \rho_1 \p_y ] \phi_h ) \phi_h
   dV
   +  \int_{\Omega_2}  ([-h^2 \Delta -1, \rho_2 \p_y ] \phi_h ) \phi_h
   dV\notag \\
   & = \O(1).
 \label{E:master-comm-21}
\end{align}

Finally, equating 
\eqref{E:comm-master-1} and \eqref{E:master-comm-2}, we have shown
\[
2  \int_\Omega \chi_{x} (|h
  \p_x \phi_h |^2 dV + 
  2 \int_{\Omega_1}
  \rho_{1,y} |h \p_y \phi_h |^2 dV + 2 \int_{\Omega_2}
  \rho_{2,y} |h \p_y \phi_h |^2 dV
= 
\O(1).
\]

Using the same estimates as in \eqref{E:chixsupp}, we have that
\[
\chi_x  \geq  h^{-1/2} \gamma(x/h^{1/2})\gamma(y/h^{1/2})
\gamma(y/h^{1/2}) - \O(1) \]
and on each of $\Omega_j$,
\[
\rho_{j,y} \geq  h^{-1/2} \gamma(x/h^{1/2})\gamma(y/h^{1/2}),
\]
so arguing as in \eqref{E:comm-sum-1}, we finally get
\[
\int_{B((0,0), h^\half)} h^{-\half} | \phi_h |^2 dV = \O(1).
\]

{\bf Step 2: $\delta = 2/3$.}

We are now ready to bootstrap the estimate for $\delta = 2/3$.  The
argument proceeds exactly as in the $\delta = 1/2$ case, but now some
of the error terms are no longer so easy to absorb.  
 We begin with the case where $p_0$ is not a corner, starting with
 defining the cutoff $\chi$ as in 
 \eqref{E:chidef}.  For $\epsilon>0$ small but independent of $h$, let

 \begin{equation}
  \chi(x,y) = \tchi ( x/h^{2/3}) \tpsi (x/\epsilon) \tpsi(
  y/\epsilon)\label{E:chidef-101}.
 \end{equation}\

Observe the only difference  in (\ref{E:chidef-101}) versus the cutoff in (\ref{E:chidef})  is the $h^{-2/3}$ appearing instead of $h^{-1/2}$.
This is good, since we will once again need some boundary terms to
cancel.  The argument is identical to the argument in the $\delta =
1/2$ case except for one piece: $\chi_{xx}$ is no longer $\O(h^{-1})$
but instead is $\O(h^{-4/3})$.  We will have to work harder to control
this.

Beginning with the commutator, 
since $\chi_y$ and $\chi_{yy}$ are both 
bounded, we have
\begin{align*}
  \int_{\Omega} & ([-h^2 \Delta -1, \chi \p_x ] \phi_h ) \phi_h dV \\
  & = \int_{\Omega} ((-2 \chi_x h^2 \p_x^2 - h \chi_{xx} h \p_x
  - 2 \chi_y h \p_x h \p_y - h\chi_{yy} h \p_x) \phi_h ) \phi_h dV \\
  & = -2 \int_\Omega (\chi_x h^2 \p_x^2 \phi_h ) \phi_h dV
-\int_{\Omega}  h \chi_{xx}( h \p_x \phi_h )
 \phi_h dV
  + \O(1).
  \end{align*}

Let
\[
I = \int_\Omega  h \chi_{xx}( h \p_x \phi_h )
\phi_h dV.
\]
Even though $\chi_{xx} = \O(h^{-4/3})$, we will nevertheless show $I$
is bounded.  Write
\[
I = \int_{-r}^r \int_{\beta(y)}^r h \chi_{xx} (h \p_x
\phi_h) \phi_h dx dy
\]
and integrate by parts:
\begin{align}
  I & = - \int_{-r}^r \int_{\beta(y)}^r 
  (\phi_h) h \p_x (h \chi_{xx} \phi_h) dxdy + \int_{-r}^r h^2
  \chi_{xx} | \phi_h|^2 |_{\beta(y)} ^r dy\notag  \\
  & = - {I} - h^2 \int_{-r}^r \int_{\beta(y)}^r 
  \chi_{xxx} |\phi_h|^2  dxdy  + \int_{-r}^r h^2
  \chi_{xx} | \phi_h|^2 |_{\beta(y)}^r dy. \label{E:I-101}
\end{align}
Let
\[
I_1 = h^2 \int_{-r}^r \int_{\beta(y)}^r 
\chi_{xxx} |\phi_h|^2  dxdy.
\]
We have $\chi_{xxx} = h^{-2}$, so $I_1 = \O(1)$.  We pause briefly
here to observe that the function $\chi_{xxx}$ still has large support
in the $y$ direction, so we cannot use the $\delta = 1/2$
non-concentration estimate here.  We use that for the next term: let
\[
I_2 = \int_{-r}^r h^2
\chi_{xx} | \phi_h|^2 |_{\beta(y)} ^r dy.
\]
As before, the support properties of $\chi$ and its derivatives tells
us
\[
I_2 = - \int_{-r}^r h^2
\chi_{xx} | \phi_h|^2 (\beta(y), y)  dy.
\]


 \begin{remark}  Away from corners, the bound $I_2 = O(1)$ follows from the universal eigenfunction boundary restriction upper bound in (\ref{Tataru}). Indeed, since $h^2 \chi_{xx} = O(h^{2/3}) \tilde{\chi}_{xx}$,  
$$ I_{2} = O(h^{2/3}) \int_{\partial \Omega} \tilde{\chi}_{xx}| \phi_h |^2  dS = O(1)$$
where the last estimate follows from the Tataru bound $ \int_{\partial \Omega} \tilde{\chi}_{xx} | \phi_h |^2 dV = O(h^{-2/3})$ since $\tilde{\chi}_{xx}$ is supported away from corners. However, since we will need our estimates to hold near corners as well, we give a more direct argument here to bound $I_2.$ \end{remark}

Note that $I_2$ \,  is a boundary integral with support in three different regions in
the $x$ direction.  We have $\chi_{xx} = \O(h^{-4/3})$ for $-3 h^{2/3}
\leq x \leq 3 h^{2/3}$, and $\chi_{xx} = \O(1)$ for $| x | \geq 3
h^{2/3}$.  In the latter region, the boundary integral then has $h^2$,
so Sobolev embedding gives $\O(h)$.  It is on the region $-3 h^{2/3}
\leq x \leq 3h^{2/3}$ where we may encounter a problem. 
Let $[a(h), b(h)]$ be the image in $y$ of $[-3 h^{2/3} , 3 h^{2/3}]$.
Using the   support properties of $\tchi$ and the Fundamental Theorem
of Calculus to relate the boundary integral to an interior integral 
 (similar to a Sobolev estimate), 
  \begin{align}\label{sobolev}
  |I_2| \leq C \int_{[a(h) , b(h)]} h^2 h^{-4/3} | \phi_h|^2 (
  \beta(y), y) dy  \nonumber \\
  \leq C  h^{2/3}\int_{B(p_0, M h^{2/3})} (\p_x | \phi_h |^2 ) dV .
  \end{align}
  

  Here $M>0$ is a constant large enough so that
  \[
  \{ (\beta(y), y) : a(h) \leq y \leq b(h) \} \subset B(p_0, M
  h^{2/3}),
  \]
  But for $h>0$ sufficiently small, $B(p_0, M h^{2/3}) \subset B(p_0 ,
  h^{1/2} )$, so that,  by an application of the non-concentration bound for $\delta =1/2$ 
  and Cauchy-Schwarz,
  
  \begin{align*}
    h^{2/3}& \int_{B(p_0, M h^{2/3})} (\p_x | \phi_h |^2 ) dV \\
    & \leq 2h^{2/3} \int_{B(p_0 , h^{1/2})}h^{-1} | h \p_x \phi_h | |
      \phi_h | dV \\
      & \leq C h^{2/3 -1 +1/2}
      \\
      & = \O(h^{1/6}).
  \end{align*}
  Combining this with the estimate on $I_1$ and plugging into \eqref{E:I-101}, we have
  \[
 2  I = \O(1).
 \]

 Now the computations \eqref{E:boundaryI1}-\eqref{E:comm-sum-1} are identical, including the
 boundary cancellations, leading to
 \begin{align}
    \int_\Omega & ([-h^2 \Delta -1, \chi \p_x ] \phi_h) \phi_h dV 
  \notag \\
  & \quad + \int_\Omega  ([-h^2 \Delta -1, \rho \p_y ] \phi_h) \phi_h
  dV \notag  \\
  & \geq 
   \int_\Omega  h^{-2/3} \gamma(x/h^{2/3})
  \gamma(y/h^{2/3}) | \phi_h|^2 
  dV - \O(1). \label{E:comm-sum-101}
  \end{align}

On the other hand,  expanding the commutator, using the Neumann
boundary conditions, and applying Sobolev embedding as in \eqref{E:boundary-102} yields the exact same identity:
  \begin{align*}
       \int_\Omega  & ([-h^2 \Delta -1, \chi \p_x ] \phi_h) \phi_h dV
       \\
       & = \int_{\p \Omega} \left(\chi \frac{
         \alpha'}{\kappa} h^2 \p_\nu^2 \phi_h \right) \phi_h dS \\
       & \quad + \int_{\p \Omega} \left( \frac{ \alpha'}{\kappa^2} h
       \chi_xh \p_\tau \phi_h \right) \phi_h dS + \O(1).
  \end{align*}
  And again, similar computations give
   \begin{align*}
    \int_\Omega & ([-h^2 \Delta -1, \rho \p_y ] \phi_h ) \phi_h dV \\
    & = - \int_{\p \Omega} \left( \rho \frac{1}{\kappa} h^2 \p_\nu^2
    \phi_h \right) \phi_h dS \\
    & \quad - \int_{\p \Omega} \left( \frac{\alpha'}{\kappa^2} h\rho_y h
    \p_\tau \phi_h \right)\phi_h dS  + \O(1).
   \end{align*}
   Again using the same miraculous cancellation on the boundary terms,
   we finally arrive at
   \[
   \int_\Omega   ([-h^2 \Delta -1, \chi \p_x ] \phi_h) \phi_h dV
   + 
 \int_\Omega  ([-h^2 \Delta -1, \rho \p_y ] \phi_h ) \phi_h dV =
 \O(1).
 \]
 Comparing to \eqref{E:comm-sum-101}, we have
 \[
   \int_\Omega  h^{-2/3} \gamma(x/h^{2/3})
  \gamma(y/h^{2/3}) | \phi_h|^2 
  dV = \O(1),
  \]
  which completes the proof in the case $p_0$ is not a corner.

  We finally remark that we can follow along line by line the proof in the case
  $p_0$ is a corner with similar modifications as   in the case
  $\delta = 1/2$
  to conclude
  the estimate with $\delta = 2/3$ holds at a corner as well.\\

  {\bf Step 3 (induction):} $2/3 < \delta <1.$

Our goal now is to prove that for any integer $k >0$, the theorem is
true for $\delta = 1-1/3k$.  The case $k = 1$ has already been shown,
so we are ready for the induction step.

We will need better control over some of the boundary terms than we
have had previously.  We will employ more or less the same cutoffs, so
the same important cancellation will occur, but it is the ``lower
order'' terms we need to estimate.  The issue is that lower order for
the induction means estimates for $\delta = 1-1/3k$ to prove the
estimates for $\delta = 1-1/3(k+1)$.  Since in these cases $\delta
>2/3$, this is more complicated.

In order to fix the ideas and notations, let $\tchi$ and $\tpsi$ be as
in the start of the proof.  We work initially away from a corner, but
the proof in the corner case follows line by line as the proof in the
$\delta = 1/2$ case, with one notable exception which we shall point
out as we proceed.

Fix $p_0 \in \p \Omega$ away from a corner and rotate and translate as
above so that $p_0 = (0,0)$, and locally $\p \Omega$ is a graph $y =
\alpha(x)$, $\alpha'(0) \neq 0$.  We also write $\beta = \alpha^{-1}$
so that the boundary can also be written $x = \beta(y)$.  
Let $r>0$ be as in the beginning of the proof, a number
independent of $h$  such that $B(p_0, r)$ does not meet any
corners.  Again, this is just to avoid messy numerology when writing
down our integral formulae.

Fix an integer $k>0$ and let
\[
\eta_k = 1-\frac{1}{3k}
\]
be the corresponding index.  Let 
\begin{equation}
  \label{E:chi-def-k}
  \chi = \tchi(x/h^{\eta_{k+1}})\tpsi^2(x/h^{\eta_{k}})
  \tpsi^2(y/h^{\eta_k}).
\end{equation}
We observe that this cutoff has derivative
$\sim h^{-\eta_{k+1}}$ for $x$ in an $h^{\eta_{k+1}}$ neighbourhood,
but is supported in a neighbourhood of size $h^{\eta_k}$.  In
particular, we record the following facts:
\begin{itemize}
  \item $\chi(x,y) = x/2h^{\eta_{k+1}}$ for $-h^{\eta_{k+1}} \leq x
    \leq h^{\eta_{k+1}}$ and $-h^{\eta_k} \leq y \leq h^{\eta_k}$.

  \item
    $\chi$ is supported in $[-2h^{\eta_k} , 2 h^{\eta_k}]^2$.

  \item
    The support of $\chi_x$ has three connected components in $x$:
    \[
    \chi_x = 1/2h^{\eta_{k+1}}, \,\, | x | \leq h^{\eta_{k+1}},
    \]
    and
    \[
    \chi_x = \O(h^{-\eta_{k+1}}), \,\, | x | \leq 3 h^{\eta_{k+1}};
    \]
    \[
    \chi_x = 0, \,\, 3 h^{\eta_{k+1}} \leq | x | \leq h^{\eta_k};
    \]
    and
    \[
    \chi_x = \O(h^{-\eta_k}), \,\, h^{\eta_k} \leq | x | \leq 2 h^{\eta_k}.
    \]
\end{itemize}
The purpose for replacing $\tpsi$ with $\tpsi^2$ will become apparent
shortly.

{\bf Claim:}  For $h>0$ sufficiently small, we have the estimate
\begin{equation}
  \label{E:chi-k-deriv}
\int_\Omega \chi( | h \p_x \phi|^2 + | h \p_y\phi|^2)dV = \O(h^{\eta_k}).
\end{equation}
To prove the claim, we will integrate by parts.  We first get rid of
the $\tchi$ part:
\[
| \chi | \leq \tpsi^2(x/h^{\eta_k}) \tpsi^2(y/h^{\eta_k}).
\]
In order to ease notation, let $\psi_k(x) = \tpsi(x/h^{\eta_k})$ and
similarly for $\psi_k(y)$.  
Then we integrate by parts.  Letting $I$ denote the integral (after
removing the $\tchi$):
\begin{align*}
  I & = \int_\Omega  \psi_k^2(x) \psi_k^2(y)( | h \p_x \phi|^2 + | h
  \p_y \phi|^2)dV \\
  & = \int_\Omega \psi_k^2(x) \psi_k^2(y) (-h^2 \Delta \phi) \phi dV \\
  & \quad - \int_{\Omega} 2
  h^{1-\eta_k} \tpsi'(x/h^{\eta_k})
  \psi_k(x)\psi_k^2(y) (h \p_x \phi) \phi dV \\
  & \quad -  \int_{\Omega} 2
  h^{1-\eta_k} \psi_k^2(x/h)
  \tpsi'(y/h^{\eta_k})\psi_k(y) (h \p_y \phi) \phi dV \\
  & \quad + \int_{\p \Omega} h \psi_k^2(x)
  \psi_k^2(y) ( h \p_\nu \phi) \phi dS.
\end{align*}
The last term is zero due to the Neumann boundary conditions.  For the
remaining terms, observe that $1 - \eta_k >0$ so we can estimate the
second and third terms using Cauchy's inequality:
\begin{align*}
\Bigg| \int_{\Omega} & 2
  h^{1-\eta_k} \tpsi'(x/h^{\eta_k})
  \psi_k(x)\psi_k^2(y) (h \p_x \phi) \phi dV \\
  & \quad +  \int_{\Omega} 2
  h^{1-\eta_k} \psi_k^2(x)
  \tpsi'(y/h^{\eta_k})\psi_k(y) (h \p_y \phi) \phi dV \Bigg|
  \\
  & \leq C h^{1 - \eta_k} \int_{[-2 h^{\eta_k}, 2 h^{\eta_k}]^2}
  (\psi_k^2(x)\psi_k^2(y)| h \p_x \phi|^2 +
\psi_k(y)^2  |\phi|^2) dV \\
  & \quad + C h^{1 - \eta_k} \int_{[-2 h^{\eta_k}, 2 h^{\eta_k}]^2}
  (\psi_k^2(x)\psi_k^2(y)| h \p_y \phi|^2 +
\psi_k(x)^2  |\phi|^2) dV.
\end{align*}
Recall we are assuming the theorem is true for $k$, so we have
\[
\int_{[-2 h^{\eta_k}, 2 h^{\eta_k}]^2}| \phi|^2 dV = \O(h^{\eta_k}).
\]
Collecting terms, we have 
\[
I \leq C h^{1 - \eta_k} I + \O(h^{\eta_k}).
\]
Rearranging proves the claim.

We now use this to control boundary terms.  This is really just a
cheap version of the usual Sobolev embedding, but we write out the
details as it is important for the corner case.

{\bf Claim:}  Let $\zeta(x)$ be a smooth function with support in
$\{ -3h^{\eta_k} \leq x \leq 3h^{\eta_k} \}$, $\zeta \equiv 1$ for $-2
h^{\eta_k} \leq x \leq 2h^{\eta_k}$, and $\p_x^m \zeta =
\O(h^{-m\eta_k})$.  
We have
\[
\int_{\p \Omega} \zeta | \phi |^2 dS =
\O(h^{\eta_k-1}).
\]

To prove this claim, 
 let
\[
I = \int_{\Omega } \zeta(x) \zeta(y/M)
 (h \p_x \phi) \phi dV.
\]
The number $M$ is simply chosen large enough, independent of $h$ so
that the function $\zeta(\beta(y)) \zeta(y/M) = \zeta(\beta(y))$, and $\supp
\zeta(x) \zeta(y/M)$ is in an $h^{\eta_k}$ neighbourhood of $p_0$.
From our first claim and Cauchy's inequality,
\[
| I | = \O(h^{\eta_k}).
\]
Integrating by parts:
\begin{align*}
  I & = \int_{-r}^r \int_{\beta(y)}^r \zeta(x) \zeta(y/M) (h \p_x \phi) \phi dx dy \\
  & = -I -\int_{-r}^r \int_{\beta(y)}^r h \p_x(\zeta(x)
  \zeta(y/M))| \phi|^2 dx dy \\
  & \quad - h\int_{-r}^r  \zeta(\beta(y)) \zeta(y/M)| \phi |^2 dy \\
  & = \O(h^{\eta_k}) + \O(h h^{-\eta_k} h^{\eta_k}) - h\int_{-r}^r  \zeta(\beta(y)| \phi |^2 dy.
\end{align*}
Rearranging proves the claim.  

\begin{remark}
  \label{R:ibp-on-bdy}
We pause now for an important observation which is the only place the
proof in the corner case deviates from the present case.  We will
eventually be estimating boundary integrals such as those with
$h^{-\eta_k}\tpsi'(x/h^{\eta_k}) \psi_k(x)\psi_k^2(y)$ replacing $\zeta(x) \zeta(y/M)$.  Observe that this is supported away from $x = 0$, so
that, if $(0,0)$ is a corner, this is supported away from the corner
so that we can integrate by parts {\it along the boundary}, even in
the corner case.
\end{remark}

We now follow the proof in the $\delta = 2/3$ case.  We compute the
commutator, being very careful for ``lower order terms''.  Recalling
the definition \eqref{E:chi-def-k} of $\chi$:
\begin{align}
  \int_\Omega & ([-h^2 \Delta -1, \chi \p_x ] \phi_h) \phi_h dV \notag
  \\
  & = \int_\Omega ( (-2 \chi_x h^2 \p_x^2 - h \chi_{xx} h \p_x - 2
  \chi_{y} h \p_y h \p_x - h \chi_{yy} h \p_x ) \phi_h ) \phi_h dV.
  \label{E:comm-1001}
\end{align}
Let us examine each term separately.  We have
\begin{align}
  \int_\Omega & (-2 \chi_x h^2 \p_x^2\phi_h ) \phi_h dV \notag \\
  & = \int_{-r}^r \int_{\beta(y)}^r (-2 \chi_x h^2
  \p_x^2\phi_h ) \phi_h dx dy \notag \\
  & = 
\int_{-r}^r \int_{\beta(y)}^r (2 \chi_x |h
\p_x\phi_h|^2  dx dy \notag \\
& \quad + 
\int_{-r}^r \int_{\beta(y)}^r (2 h\chi_{xx} h
  \p_x \phi_h ) \phi_h dx dy
 \notag  \\
  & \quad - 
\int_{-r}^r  2 h\chi_x( h
\p_x\phi_h ) \phi_h |_{\beta(y)}^r  dy.  \label{E:IBP-1001}
\end{align}
The term in \eqref{E:IBP-1001} with $\chi_{xx}$ also shows up in
\eqref{E:comm-1001}.  We know that $\chi_{xx} = \O(h^{-2 \eta_{k+1}})$
and is supported on a set of radius $\sim h^{\eta_k}$, so our first
claim gives
\begin{align*}
  \int_\Omega h \chi_{xx} (h \p_x \phi_h ) \phi_h dV  & = \O( h h^{-2
    \eta_{k+1}} h^{\eta_k})\\
  & = \O(1),
\end{align*}
since
\[
1 - 2 \eta_{k+1} + \eta_k = 1 - 2\left(1- \frac{1}{3(k+1)}\right) + 1
- \frac{1}{3k} = \frac{k-1}{3k(k+1)} \geq 0.
\]

For the two remaining terms in \eqref{E:IBP-1001}, we need to use the
support properties of $\chi_x$.  We have
\begin{align*}
  \chi_x & = h^{-\eta_{k+1}} \tchi'(x/h^{\eta_{k+1}})
  \tpsi^2(x/h^{\eta_k}) \tpsi^2 (y/h^{\eta_k}) \\
  & \quad + 2h^{-\eta_k} \tchi(x/h^{\eta_{k+1}})
  \tpsi'(x/h^{\eta_k})\tpsi(x/h^{\eta_k}) \tpsi^2 (y/h^{\eta_k}) .
\end{align*}
Recalling our function $\gamma(s) = \tchi'(s)$, we have
\[
\chi_x \geq h^{-\eta_{k+1}} \gamma(x/h^{\eta_{k+1}}) -
\O(h^{-\eta_k}),
\]
and let us stress again that the $\O(h^{-\eta_k})$ error term is
supported on scale $h^{\eta_k}$.  Hence we have
\[
\int_\Omega 2 \chi_x |h
\p_x\phi_h|^2  dV \geq h^{-\eta_{k+1}} \int_\Omega
\gamma(x/h^{\eta_{k+1}}) \gamma(y/h^{\eta_{k+1}}) | h \p_x \phi_h |^2
dV - \O(1).
\]

We now examine the boundary term in \eqref{E:IBP-1001}.  This is again
where we must be mindful of any differences between the case with or
without corners.  As in the previous steps in the proof, we will also be
using a commutant with the vector field $\rho \p_y$, where
\begin{equation}
\rho = \alpha'(x)\tchi(\beta(y)/h^{\eta_{k+1}}) \tpsi^2(x/h^{\eta_k})
\tpsi^2(y/h^{\eta_k}).
\label{E:rho-def-1001}
\end{equation}
The same cancellations of boundary terms will happen on the set where
$\rho_y = \chi_x$, which is for $-3h^{\eta_{k+1}} \leq x \leq 3
h^{\eta_{k+1}}$.  For $|x| \geq 3 h^{\eta_{k+1}}$, these functions do
not necessarily agree, but in this region both $\chi_x$ and $\rho_y$
are $\O(h^{-\eta_k})$ rather than $\O(h^{-\eta_{k+1}})$.  Further,
they are supported away from $x = 0$ so that we may further integrate
by parts on the boundary.  That is,
\begin{align*}
 & \int_{-r}^r   (2 h\chi_x h
\p_x\phi_h ) \phi_h |_{\beta(y)}^r  dy
\\
& = -\int_{-r}^r  2 h h^{-\eta_{k+1}}
\tchi'(x/h^{\eta_{k+1}}) \tpsi^2(\beta(y)/h^{\eta_k}) \tpsi^2 (y/h^{\eta_k}) h
\p_x\phi_h ) \phi_h (\beta(y),y) dy \\
&
\quad -\int_{-r}^r  4 h h^{-\eta_{k}}
\tchi(x/h^{\eta_{k+1}}) \tpsi'(\beta(y)/h^{\eta_k}) \tpsi(\beta(y)/h^{\eta_k})\tpsi^2 (y/h^{\eta_k}) h
\p_x\phi_h ) \phi_h (\beta(y),y)  dy.
\end{align*}
The cutoffs in the second term are supported away from $x = 0$, where
$\tchi = \pm 1$.  Let $\tau$ denote the tangent variable so that, as
above,
\[
\p_y \phi_h|_{\p \Omega} = \frac{\alpha'}{\kappa} \p_\tau \phi_h|_{\p
  \Omega}.
\]
Let
\[
\tzeta(y) = \tchi(\beta(y)/h^{\eta_{k+1}})  \tpsi'(\beta(y)/h^{\eta_k}) \tpsi^2
(y/h^{\eta_k}),
\]
and let $\zeta(\tau)$ denote $\tzeta$ in tangent coordinates,
so that $\p_\tau^m \zeta = \O(h^{-m\eta_k})$.  
Then
\begin{align*}
  \int_{-r}^r & 2 h h^{-\eta_{k}}
\tchi(x/h^{\eta_{k+1}}) \tpsi'(\beta(y)/h^{\eta_k})  \tpsi(\beta(y)/h^{\eta_k})\tpsi^2 (y/h^{\eta_k}) h
\p_x\phi_h ) \phi_h (\beta(y),y)  dy \\
& = \int_{\p \Omega}  h^{2 - \eta_k} \zeta(\tau)
\frac{\alpha'}{\kappa} \p_\tau ( | \phi_h|^2) d \tau \\
& = - \int_{\p \Omega}  h^{2 - \eta_k} \p_\tau (\zeta(\tau)
\frac{\alpha'}{\kappa})  | \phi_h|^2 d \tau \\
& = \O(h^{2 - 2 \eta_k} h^{\eta_k-1})
\\
& = \O(1),
\end{align*}
where we have used the second claim and that $\eta_k < 1$ for every
$k$.  Collecting terms, we have
\begin{align*}
  & \int_{-r}^r   (2 h\chi_x h
\p_x\phi_h ) \phi_h |_{\beta(y)}^r  dy
\\
& = -\int_{-r}^r  2 h h^{-\eta_{k+1}}
\tchi'(\beta(y)/h^{\eta_{k+1}}) \tpsi^2(\beta(y)/h^{\eta_k}) \tpsi^2 (y/h^{\eta_k}) h
\p_x\phi_h ) \phi_h (\beta(y),y) dy + \O(1).
\end{align*}

\begin{remark}
We stress again here that this
part of the proof is where we have to be careful if $p_0=(0,0)$ is a
corner.  The above 
integrations by parts would not be
possible near a corner without the observation that the integrand is
supported away from $x = 0$.

\label{R:support}
\end{remark}

We continue with the other two terms in \eqref{E:comm-1001}.  We have
$\chi_y = \O(h^{-\eta_k})$ and $h \chi_{yy} = \O(h^{1-2 \eta_k}) =
\O(h^{-\eta_k})$, and we are integrating over a region of radius
$h^{\eta_k}$, so using our claim,
\[
\int_\Omega ( (- 2
  \chi_{y} h \p_y h \p_x - h \chi_{yy} h \p_x ) \phi_h ) \phi_h dV = \O(1).
  \]

  We now use the vector field $\rho \p_y$ as in
  \eqref{E:rho-def-1001}.  All of the computations are similar, once
  again singling out the boundary terms which are supported near $x =
  0$ but where $\chi_x = \rho_y$ and summing as in the $\delta = 2/3$
  case, we get
  \begin{align*}
    \int_\Omega & ([-h^2 \Delta -1, \chi \p_x] \phi_h ) \phi_h dV +
    \int_\Omega ([-h^2 \Delta -1, \rho \p_y] \phi_h ) \phi_h dV \\
    & = 2 \int_\Omega \chi_x | h \p_x \phi_h |^2 dV + 2 \int_\Omega
    \rho_y | h \p_y \phi_h |^2 dV \\
    & \quad 
-\int_{-r}^r  2 h h^{-\eta_{k+1}}
\tchi'(\beta(y)/h^{\eta_{k+1}}) \tpsi(\beta(y)/h^{\eta_k}) \tpsi (y/h^{\eta_k}) h
\p_x\phi_h ) \phi_h (\beta(y),y) dy \\
& \quad   
+\int_{-r}^r  2 h h^{-\eta_{k+1}}
\tchi'(x/h^{\eta_{k+1}}) \tpsi(x/h^{\eta_k}) \tpsi (\alpha(x)/h^{\eta_k}) h
\p_y\phi_h ) \phi_h (x , \alpha(x)) dx + \O(1) \\
& \geq h^{-\eta_{k+1}} \int_\Omega \gamma(x/h^{\eta_{k+1}})
\gamma(y/h^{\eta_{k+1}}) | \phi_h |^2 dV - \O(1) \\
& \geq \quarter h^{-\eta_{k+1}} \int_{\Omega\cap B(p_0, h^{\eta_{k+1}})}  | \phi_h |^2 dV - \O(1)
\end{align*}

  Finally, we unpack the commutator as in the $\delta = 1/2$ case and
  use the claims and observations above to conclude that
  \[
  \int_\Omega ([-h^2 \Delta -1, \chi \p_x] \phi_h ) \phi_h dV +
    \int_\Omega ([-h^2 \Delta -1, \rho \p_y] \phi_h ) \phi_h dV =
    \O(1).
    \]
    This completes the proof in the case $p_0$ is not a corner.  In
    the case $p_0$ is a corner, we use Remark \ref{R:support} and the rest of
    the proof is identical.

  \end{proof}


\section{Restriction bounds for Dirichlet data: Proof of Theorem \ref{dirichlet} } \label{restriction}

As an application of  Theorem \ref{T:non-con}, we now prove the restriction bounds along totally geodesic boundary components up to corners in Theorem \ref{dirichlet}. 
A key technical component in the proof of Theorem \ref{dirichlet} involves estimating near-glancing mass  based on potential layer theory for
 the boundary data outside of $h^{\delta}$ neignbourhoods of the
 corners. To estimate restriction in $h^{\delta}$ neighbourhoods of
 the corners, we then use  the non-concentration result in Theorem
 \ref{T:non-con} combined with Sobolev estimates. Before carrying out
 the details, we briefly review some of the salient facts needed here
 and refer the reader to \cite{HZ} for further details.
 
\subsection{Potential layers and the boundary jumps equation} 

Let $\Omega \subset \R^2$ be a piecewise-smooth, bounded convex planar domain.
 The free Green's function 
 for  the Helmholtz equation 
 $$ (- \Delta - h^{-2}) G(x,y,h) = \delta_{x}(y), \quad (x,y) \in \R^2 \times \R^2$$
 is given in terms of Hankel functions:
 $$G(x,y,h) = \frac{i}{4} \text{Ha}^{(1)}_{0}( h^{-1}|x-y| ).$$

The corresponding double layer operator $N(h) : C^{0}(\partial \Omega) \to C^{0}(\partial \Omega)$ is given by
\begin{eqnarray} \label{double}
N(h) f(q) &=& \int_{\partial \Omega} N(q,q',h) \, f(q') d\sigma(q') ,\nonumber \\
N(q,q',h) = 2 \partial_{\nu(q)} G(q,q',h) &=& \frac{i}{4} h^{-1} \big\langle \nu(q'), \frac{q-q'}{|q-q'|} \big\rangle \,\cdot \,   \text{Ha}^{(1)}_{1}(h^{-1}|q-q'|), \end{eqnarray}\
where, 
\begin{equation} \label{hankel}
 \text{Ha}^{(1)}_1(z) = \Big( \frac{2}{\pi z} \Big)^{1/2}  \frac{e^{i (z - 3\pi/4) } }{\Gamma(3/2)} \int_{0}^{\infty} e^{-s}  s^{1/2} (1 - \frac{s}{2iz} )^{1/2} \, ds. \end{equation}

Here, and throughout the paper, $\nu(q)$ denotes the unit boundary external
normal at   $ q \in \mathring{ \partial \Omega} = \partial \Omega \setminus {\mathcal C}$.

We recall that the boundary jumps equation says that
\begin{equation}
\label{e:jumps}
u_h(q) = N(h) u_h(q); \quad q \in \partial \Omega
\end{equation}\
where  $N(h)$ is the double layer operator in (\ref{double}).

Let
$$ {\mathcal S} := {\mathcal C} \cup S^* \mathring{\partial \Omega},$$
where the ${\mathcal C} = \cup_{m=1}^N \{ c_{m} \}$ is the set of
corner points and $S^* \mathring{\partial \Omega}$ is the {\em glancing set} of the interior of the boundary faces. 
We will sometimes refer to ${\mathcal S}$ simply as the {\em singular
  set}  and  let  $U$ denote a neighbourhood  of 
  $$ \Xi:=  ( \beta^{-1}(\mathring{B}^*\partial \Omega) )^{c} \times {\mathcal S} \cup {\mathcal S} \times ( {\beta}_{-}^{-1}(\mathring{B}^*\partial \Omega) )^c,$$
where $\beta_{-}:\mathring{B}^*\partial \Omega \to B^*\partial \Omega$ is the backwards billiard map.

We recall  (\cite{HZ} Prop. 4.2)  the following $h$-microlocal decomposition of the double layer operator:
\begin{align} \label{ndecomp}
N(h)&=N_\beta(h)+N_\Delta(h)+N_{{\mathcal S}}(h),
\end{align}
where   $N_{\Delta}(h) \in \Psi^{-1}_h(\partial \Omega )$, a boundary $h$-pseudodifferential operator of order $-1$ (see  section \ref{bdypsdos} for a precise definition) and $ WF_h' N_{\mathcal S}(h) \subset U.$ For our purposes, the most important  part of the double layer is $N_\beta (h) \in I_h^0(\mathring{\partial \Omega}; \Lambda_{\beta}),$  a zeroth-order $h$-Fourier integral operator ($h$-FIO) with canonical relation
$$ \Lambda_{\beta} = \{ (q,\xi,q',\xi') \in B^* \mathring{\partial \Omega}^+ \times B^*\mathring{\partial \Omega}^+; (q',\xi') = \beta(q,\xi) \},$$
where $\beta: B^*\mathring{\partial \Omega}^+ \to B^* \mathring{\partial \Omega}^+$ is the standard billiard map.

With $(q,\xi) \in B^* \mathring{\partial \Omega}^+$ and $(q',\xi') =
\beta(q,\xi),$ the operator $N_{\beta}(h)$ has principal symbol (see \cite{HZ} Prop. 6.1) 
\begin{equation} \label{symbol} 
\sigma(N_\beta)(\zeta,\beta(\zeta))=-i\frac{(1-|\xi|_q^2)^{1/4}}{(1-|\xi'|_{q'}^{2})^{1/4}}\, |dq d\xi|^{1/2}, \quad \zeta = (q,\xi) \in B^*\mathring{\partial \Omega}^+.
\end{equation}

Since $\Lambda_{\beta} \subset T^*\partial \Omega \times T^*\partial \Omega$ is a canonical graph, it follows by the $h$-Egorov theorem   (\cite{Zw} section 11.1)  that

\begin{equation}
N_{\beta}^* \, N_{\beta} \in \Psi_h^0(\partial \Omega), \quad \sigma(N_{\beta}^* \, N_{\beta})(q,\xi) = \frac{(1-|\xi|_q^2)^{1/2}}{(1-|\xi'|_{q'}^{2})^{1/2}}. \end{equation}

In the following, we make the additional assumption that $\Omega$ has a boundary decomposition
$$ \partial \Omega = \Gamma_j  \cup_{k \neq j} \Gamma_k  ,$$
where $\Gamma_j$ is a flat boundary edge and the $\{ \Gamma_k \}_{ k \neq j}$ are the remaining (possibly curved) boundary edges. We will follow the convention that $\Gamma_{j-1}$ and $\Gamma_{j+1}$ are the edges adjacent to $\Gamma_j$ sharing corner points $c_{j}$ and $c_{j+1}$ respectively with $\Gamma_j.$

In this case, the analysis of the operator $N(h)$ simplifies substantially due to the fact that along the flat edge $\Gamma_j,$ the Schwartz kernel 

\begin{equation} \label{vanish}
N(h)(q,q') \equiv 0; \quad (q',q) \in \mathring{\Gamma_j} \times \mathring{\Gamma_j}, \,\, j=1,...,N. \end{equation}
Indeed,  (\ref{vanish})  follows immediately from (\ref{double}) and the fact that
$$ \big\langle \nu(q'), \frac{q-q'}{|q-q'|} \big\rangle \equiv 0, \quad  (q,q') \in \mathring{\Gamma_j} \times \mathring{\Gamma_j}.  $$

 \subsubsection{h-FIO part of the potential layer} \label{hFIO}

It follows  from the integral formula (\ref{hankel}) for the
Hankel function that  the $h$-FIO part of the potential layer $N(h)$ has Schwartz kernel of the form

\begin{equation} \label{wkb}
N_{\beta}(q,q') = (2\pi h)^{-1/2} e^{i |q-q'|/h} c(q,q',h),\end{equation}
where $c(q,q',h) \sim \sum_{j=0}^{\infty} c_j(q,q') h^j, \,\, c_j \in C^{\infty}(\mathring{\partial \Omega} \times \mathring{\partial \Omega})$  when $ |q-q'| \gtrapprox h^{\delta}, \,\, \delta \in [0,1).$  To derive (\ref{wkb}),  one observes that 
the function $b \in C^{\infty}(\R)$ given by 
\begin{equation} \label{intb}
b(x):= \int_{0}^{\infty} e^{-\tau} \tau^{1/2}  \, ( 1 -
\frac{\tau}{2i} x^{-1} )^{1/2} \, d\tau. \end{equation}
has standard conormal asymptotic expansion as $x \to \infty.$ Indeed,
by Taylor  expansion of the integrand  in (\ref{intb}), it follows that
$$ b(x) \sim  \sum_{j=0}^{\infty} b_j x^{-j} \quad \text{as} \quad x \to \infty.$$

Consider the  piecewise-smooth function 

\begin{equation} \label{inta}
a(q,q') := |q-q'|^{-1/2} \, \langle \nu(q'), \frac{q-q'}{|q-q'|} \rangle  = O(|q-q'|)^{-1/2} \end{equation} 
uniformly for $(q,q') \in \partial \Omega.$ In this case, the factor $\langle \nu(q'), \frac{q-q'}{|q-q'|} \rangle  = O(1)$ and no better since  the  boundary normal  jumps at corners.
Then, the WKB expansion (\ref{wkb}) for the Schwartz kernel $N_{\beta}(h)(q,q')$ follows from (\ref{intb}) and (\ref{hankel}). Moreover, it follows that one can write $N_{\beta}(h)(q,q')$ somewhat more succinctly in the form

\begin{equation} \label{inteqn2}
N_{\beta}(h)(q,q')= (2\pi h)^{-1/2} e^{i |q-q'|/h} \, a(q,q') \, b( h^{-1} |q-q'|). 
\end{equation}\

Since in (\ref{intb}), $|1- \frac{\tau}{2i} x^{-1}| \geq 1$ where $x = |q-q'|/h,$ by differentiation under the integral sign  and using the estimates
$$ |\partial_{q,q'}^{\beta} x | = O_{\beta}( x \, |q-q'|^{-|\beta|} ),$$
  it follows that 
 for $(q,q') \in \mathring{\partial \Omega} \times \mathring{\partial \Omega}$ \\
 
\begin{align} \label{symbolbounds0}
\partial_{q,q'}^{\alpha} a(q,q') & = O_{\alpha} ( |q-q'|^{-1/2 - |\alpha| } );  \quad  |q-q'| \lessapprox 1, \\ \nonumber
 \partial_{q,q'}^{\beta} b( h^{-1} |q-q'| ) &=  O_{\beta} (1) \partial_{q,q'}^{\beta}  (x^{-1})= O_{\beta} (|q-q'|^{-|\beta|}); \quad  1 \lessapprox  h^{-1} |q-q'|  \lessapprox h^{-1}. 
\end{align} 
 
Moreover, the derivative estimates in   (\ref{symbolbounds0})  are {\em uniform} for $(q,q') \in \mathring{\partial \Omega} \times \mathring{\partial \Omega}.$ 

We note for future reference that from the Leibniz formula and the derivative estimates (\ref{symbolbounds0}) it follows that
the symbol $c(q,q',h)$ in (\ref{wkb}) can be written in product form 
 $$c(q,q',h):= a(q,q') \cdot b(h^{-1}(|q-q')),$$
where

\begin{equation} \label{symbolbounds}
  |q-q'|^{1/2}  \,\, \partial_{q,q'}^{\alpha} c (q,q',h) = O_{\alpha}
  (|q-q'|^{-|\alpha|}), \quad 1 \lessapprox h^{-1} |
q-q'| \lessapprox h^{-1}. \end{equation} \

From (\ref{symbolbounds}), it follows that
$$ h^{\delta/2} \partial_{q,q'}^{\alpha} c (q,q',h) = O_{\alpha} (h^{-\delta |\alpha|}); \quad |q-q'| \gtrapprox h^{\delta}, \,\, 0 \leq \delta <1, $$
and so, $$  h^{\delta/2} c \in S^{0}_{\delta}(1).$$
The formula in (\ref{inteqn2}) together with the symbolic estimates in (\ref{symbolbounds}) will be used in the next section.

\section{$h$-microlocalized jumps formula: estimates near glancing} \label{defn}

We introduce several cutoff functions at this
point.  As above, let $\Gamma_j$ be a flat boundary edge with corner endpoints $c_j$ and $c_{j+1}.$ More generally, we order all egdes $\Gamma_k: k=1,..,N-1$ in a counterclockwise fashion and let $c_{k}$ (resp. $c_{k+1}$) be the corner endpoints of $\Gamma_k$ adjacent to the edges $\Gamma_{k-1}$ (resp. $\Gamma_{k+1}).$ Throughout the paper, $q:[0,L] \to \partial \Omega$ will denote the  piecewise $C^\infty$ arclength parametrization of the boundary.

Let $\chi \in C^{\infty}_0(\R^2), \,\,  0 \leq \chi \leq 1, $ be a radial cutoff  with $\chi(x) =1$ for $|x| \leq 1$ and $\chi(x) = 0$ for $|x| \geq 2.$ Fix a constant $C_0 = \frac{1}{2} \min_j
|\Gamma_j|,$  and consider the corresponding boundary corner cutoffs $\psi_k: \partial \Omega \to [0,1]$ with

\begin{equation} \label{cornercutoff}
\psi_k(y):= \chi ( C_0^{-1}  ( q(y) - c_k ) ). \end{equation}\

Similarily, the corresponding small-scale boundary corner cutoffs on scales $h^{\delta}$ will  be denoted by $\psi_k^{\delta}: \partial \Omega \to [0,1],$ where

\begin{equation} \label{cornercutoffs}
\psi_k^{\delta}(y;h):=  \chi( C_0  \, h^{-\delta} (q(y) - c_k) ). \end{equation}\

It will also be useful to introduce notation for the sum of all corner cutoffs and so,  we introduce the cutoffs
\begin{equation} \label{cutoffsum}
\psi(y):= \sum_{k=1}^{N} \psi_k(y), \quad \psi^{\delta}(y,h):= \sum_{k=1}^{N} \psi_k^{\delta}(y,h). \end{equation}

   Thus,  $\psi_k^{\delta}: \partial \Omega \to [0,1]$ is a standard
   cutoff supported in an $h^{\delta}$-neighbourhood of the corner point $c_k$ and so, $(1-\psi_k^{\delta})$ is supported outside an $h^{\delta}$-neighbourhood of the corner point $c_k.$

 We continue to assume in the following that $0 \leq 2 \delta <1.$  Then, by Taylor expansion of the integral formula for the  Green's function
       in (\ref{intb}) it follows that

       \begin{equation} \label{asymp1}
       \sup_{\{(q,q'); |q-q'| \gtrapprox h^{2\delta} \}} \big| N(q,q',h) - e^{i |q-q'|/h} a(q,q') b(h^{-1}|q-q'|) \big| = O(h^{\infty}). \end{equation}\

In (\ref{asymp1}),  $b$ and $a$ are defined in (\ref{intb}) and (\ref{inta}) respectively and are, in particular, piecewise-smooth on the off-diagonal set $\{ (q,q') \in \partial \Omega \times \partial \Omega, |q-q'| \gtrapprox h^{2\delta} \}$ up to corner points in $q$ and $q'.$ 

As a special case, it follows from (\ref{asymp1}) that with the corner cutoffs in (\ref{cornercutoffs}) and for all edge indices $k, \ell \in \{ 1,...,N \},$ 
\begin{align} 
\sup_{(q,q') \in \partial \Omega \times \partial \Omega} & \big|
(1-\psi_k^{\delta})(q,h) \big( N(q,q',h) -  e^{i |q-q'|/h} a(q,q')
b(h^{-1}|q-q'|) \big) \psi_{\ell}^{2\delta}(q',h) \big| \notag \\
& = O(h^{\infty}), \label{ASYMP1} \end{align}
and similarily, when $\ell \neq k,$
\begin{align} 
\sup_{(q,q') \in \Gamma_{\ell} \times \Gamma_k} & \big|
(1-\psi_k^{\delta})(q,h) \big( N(q,q',h) -  e^{i |q-q'|/h} a(q,q')
b(h^{-1}|q-q'|) \big) (1- \psi_{\ell}^{2\delta}(q',h) \big| \notag \\
& = O(h^{\infty}). \label{ASYMP2} \end{align}

We will use (\ref{ASYMP1}) and (\ref{ASYMP2}) in  the next section where we  $h$-microlocalize the  jumps equation (\ref{e:jumps}) near the glancing set $S^* \Gamma_j.$ Before doing this, we  introduce some frequency cutoffs to the glancing set $S^* \mathring{\partial \Omega}:$ Specifically, for arbitarily small but fixed $\epsilon_0 >0$, let $\chi_{j} \in C^{\infty}_0(T^* \mathring{\Gamma_j} ) $ with $ 0 \leq  \chi_{j} \leq 1$ and such that $\chi_{j}(\xi) = 1$ when $ 1- \epsilon_0 \leq |\xi| \leq 1 + \epsilon_0$ and $\chi_{j}(\xi) = 0$ provided $ | 1- |\xi || \geq 2 \epsilon_0.$ 
The corresponsding $h$-pseudodifferential cutoffs are  $\chi_j(h):= Op_h(\chi_j) \in \Psi_h^0(\mathring{\Gamma_j}).$ 

Note that since $\Omega$ is convex with non-trivial corners, given
$(q,\xi) \in  \text{supp} \, \chi_j \subset B^*\Gamma_j$ the ray with
basepoint $q \in \Gamma_j$ and (co)-vector $\xi$ intersects  the
adjacent side $\Gamma_k$ {\em transversally}. In addition, the ray
intersects $\Gamma_k$ at a distance $\lessapprox \epsilon_0 $ to the
corner  $c_k$, $k=j-1, j+1.$   More precisely, there exists a constant $C_2=C_2(\alpha_j)>0$ depending only on the angle $\alpha_j$ such that with $$(q',\xi') = \beta(q,\xi), \quad (q,\xi) \in \text{supp} \, \chi_j,$$
and for $\epsilon_0 >0$ sufficiently small,

\begin{equation} \label{transversal}
 | |\xi'|_{q'} - 1 | \geq C_2 \,\, \text{and} \,\, |q' - c_{j} | \leq C_1 \epsilon_0 \,  |q-c_j| \,\,  \text{provided} \,\,\,\, | |\xi|_q - 1| \leq \epsilon_0.
\end{equation}\

Note that in (\ref{transversal}), the basepoint $q \in \Gamma_j$ and so, $q' \in \Gamma_k$ where $k = j-1$ or $k = j+1$ provided $\epsilon_0$ is chosen sufficiently small.

\subsection{Proof of Theorem \ref{dirichlet}: The obtuse case} \label{obtuse}

\begin{proof}

In the following,  when convenient, we will freely use the notation $ u_h^j:= u_h
    {\bf 1}_{\Gamma_j},$ for eigenfunction boundary traces along
    $\Gamma_j.$ To simplify the analysis slightly, we assume in this section that
    the flat edge $\Gamma_j$ intersects  adjacent sides at obtuse
    angles $\alpha_j > \pi/2.$ We observe that in such a case, near
    glancing rays to the flat edge $\Gamma_j$ intersect an adjacent
    side $\Gamma_k, \,\, k = j-1, j+1$, transversally and, after an additional reflection,
    under the admissiblity assumption in Definition \ref{admissible}
    on the interior angles, intersects the boundary $\partial \Omega$
    transversally and away from corners (see Figure \ref{F:Fig-1}).  This is a key observation in our analysis
    below.    We first give the proof of Theorem \ref{dirichlet} in
    the case where all angles at corners  $c_{j}$ and $c_{j+1}$ adjacent to the flat side $\Gamma_j$  are obtuse (this assumption
    allows for a technically somewhat simpler argument). Finally, we
    indicate the fairly  minor changes necessary for the proof in the
    general case in subsection \ref{acute}.

\begin{figure}
\hfill
\centerline{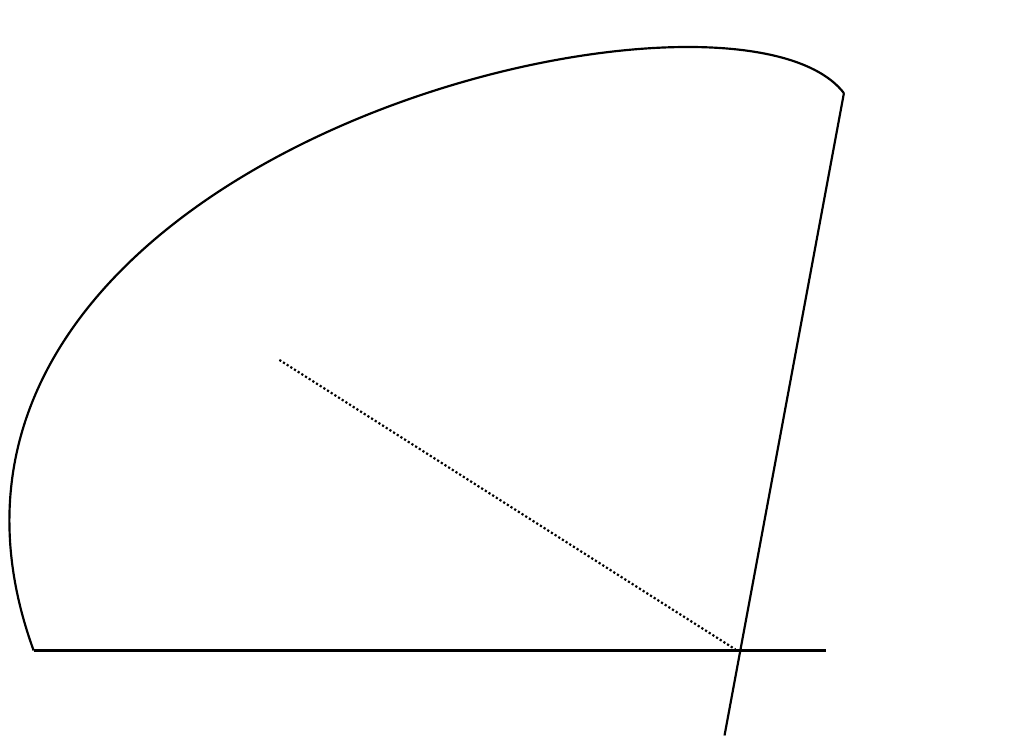}
\caption{\label{F:Fig-1} The setup for obtuse angles.}
\hfill
\end{figure}

In the following, we fix $\delta \in (0, 1/2).$ 
We will eventually estimate the near corner mass   $\| \psi_j^{\delta} u_{h}
\|_{L^2(\Gamma_j)}$   separately using the non-concentration estimate in
Theorem \ref{T:non-con}.   To estimate   $\| (1 - \psi_j^{\delta} ) u_h \|_{L^2(\Gamma_j)},$ in view of (\ref{e:jumps}) and (\ref{vanish}), one can write with $u^{j,k}_h = {\bf 1}_{\Gamma_{j,k}} u_h,$

\begin{equation} \label{inteqnrev}
(1-\psi_j^{\delta}(h) )u_h^j = \sum_{k \neq j} (1 - \psi_j^{\delta}) {\bf 1}_{\Gamma_j} N(h) \,  u_h^k.
\end{equation}\

Next, an application of the frequency cutoff operator  $\chi_{j}(hD): C^{\infty}(\mathring{\Gamma_j}) \to C^{\infty}(\mathring{\Gamma_j})$  to both sides of (\ref{inteqnrev}) gives
\begin{equation} \label{inteqn0.5}
\chi_j(h D)  (1- \psi_{j}^{\delta}(h)) u_h^j = \sum_{k \neq j} \chi_j(hD)  \, (1-\psi_j^{\delta}(h)) {\bf 1}_{\Gamma_j} N(h) \, u_h^k + O(h^{\infty}).
\end{equation}
 
 As for the non-glancing mass, the small-scale  Rellich commutator result in Lemma \ref{rellich} shows that

\begin{equation} \label{nonglancingmass}
\| [1-\chi_j (hD) ]\, (1-\psi_j^{\delta}(h)) u_h^{j} \|_{L^2(\Gamma_j)} = O(h^{-\delta/2}) = O(h^{-1/4 -0}). \end{equation}\

Since $\Gamma_j$ is assumed to be flat, we note that here $\chi_j(hD)$ is a {\em tangential} $h$-psdo (see Definition \ref{bdypsdos}) acting on the boundary components and consequently, the symbol $\chi_j(\xi)$ depends only on frequency coordinates in $T^* \mathring{\Gamma_j}.$ 

 The next step is to insert the small-scale corner cutoffs $\psi_k^{2\delta}(h)$ in (\ref{cornercutoffs}) on the RHS of (\ref{inteqn0.5}).  This gives 

\begin{eqnarray} \label{geo/diff}
\chi_j(h D)  (1- \psi_{j}^{\delta}(h)) u_h^j =  \, N_j^{\mathcal G}(h)  u_h 
+ \,  N_j^{\mathcal D} u_h  + O(h^{\infty}).
\end{eqnarray}\

where,
\begin{eqnarray} \label{geo/diff defn}
N_j^{\mathcal G}(h):= \sum_{k \neq j} \chi_j(hD)  \, (1-\psi_j^{\delta}(h)) {\bf 1}_{\Gamma_j} N(h) \, (1-\psi_k^{2\delta}(h))  {\bf 1}_{\Gamma_k}, \nonumber \\
 N_j^{\mathcal D}(h):= \sum_{k \neq j} \chi_j(hD)  \, (1-\psi_j^{\delta}(h)) {\bf 1}_{\Gamma_j} N(h) \, \psi_k^{2\delta}(h)  {\bf 1}_{\Gamma_k}.  \hspace{1cm}\end{eqnarray}\\

 In the following,  refer to $N_j^{\mathcal G}(h)$ as the {\em geometric} part of the potential layer and $N_{j}^{\mathcal D}(h)$ as the {\em diffractive} part.
Consistent with this terminology, we can write 
$$N_j^{\mathcal G,\mathcal D}(h) := \sum_{k \neq j} N_{jk}^{\mathcal G,\mathcal D}(h),$$
where we will refer to $N_{jk}^{\mathcal G}$ (resp. $N_{jk}^{\mathcal D}$) as the {\em geometric} (resp. {\em diffractive}) transfer operators.

\begin{remark} \label{FIO}
We note that from (\ref{ASYMP1}) and (\ref{ASYMP2}) it follows that, modulo $O(h^\infty)$-error, one can replace $N(q,q',h)$ with its WKB  expansion in (\ref{inteqn2}) in {\em both} the geometric (resp. diffractive) operators $N_{j}^{\mathcal G}$ (resp. $N_j^{\mathcal D}$) in (\ref{geo/diff}). It then follows that the geometric part $N_{j}^{\mathcal G}(h): C^{\infty}(\mathring{\partial \Omega}) \to C^{\infty}(\mathring{\Gamma_j})$ is an h-FIO of order zero with  a small-scale symbol in $h.$\end{remark}\

\subsubsection{Bounds for the geometric operators}
In this section, we estimate the geometric term $\| N_j^{\mathcal G} u_h \|_{L^2(\Gamma_j)}$ by analyzing the geometric transfer operators $N_{jk}^{\mathcal G}, \,\, k \neq j$ appearing the in the decomposition (\ref{geo/diff}) in more detail.  We begin with the following



\begin{lemma} \label{MCL} For sufficiently small choice of
  $\epsilon_0>0$ in the frequency cutoff $\chi_j(\xi)$ along
  $\Gamma_j,$  there exist uniform constants $C_1>0$ and $C_2>0$
  depending only on the domain $\Omega$ such that
  when $k  \in \{ j-1,j+1 \},$
 \begin{align}
 WF_h'&(N_{jk}^{\mathcal G}(h)) \subset \big\{ (y,\xi;y'\xi') \in B^*\mathring{\Gamma_j} \times B^*\mathring{\Gamma_k}; \, |q(y)-c_j| \geq C_1 h^{\delta},  \nonumber \\
&  |q(y') - c_j| \leq C_2 \epsilon_0 \,  |q(y)-c_j|, \,\, | |\xi'|_{y'}
 - \cos (\pi - \alpha_k) | \leq C_3 \epsilon_0,\,\, |y-c_k| \leq C_2
 \epsilon_0 \big\}.
  \label{WF}
 \end{align}

 Moreover, when $ k \notin \{ j-1, j+1 \}$,
$$   WF'_h N_{jk}^{\mathcal G}(h)  = \emptyset.$$

\end{lemma}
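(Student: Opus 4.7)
The plan is to compute the twisted semiclassical wavefront set $WF_h'(N_{jk}^{\mathcal G}(h))$ by first replacing the Schwartz kernel of $N(h)$ by its WKB expansion, then applying semiclassical stationary and non-stationary phase to the resulting oscillatory integral, and finally reading off the critical set via the classical billiard geometry together with the transversality estimate \eqref{transversal}. Concretely, using \eqref{ASYMP1}--\eqref{ASYMP2}, I would replace, modulo $O(h^{\infty})$, the kernel $N(q(y),q(y'),h)$ by the WKB expression $(2\pi h)^{-1/2} e^{i|q(y)-q(y')|/h}\,a(q(y),q(y'))\,b(h^{-1}|q(y)-q(y')|)$ from \eqref{inteqn2}. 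The smallest separation forced by the cutoffs is $\gtrapprox h^{2\delta}$ (for $k$ adjacent) and $\gtrapprox 1$ (for $k$ non-adjacent), so the hypotheses of \eqref{ASYMP1}--\eqref{ASYMP2} are met, and by \eqref{symbolbounds} the amplitude $c=a\cdot b$ lies in $h^{-\delta/2}S^0_{\delta}$. Writing $\chi_j(hD)$ as a semiclassical oscillatory integral in a phase variable $\zeta$, the Schwartz kernel of $N_{jk}^{\mathcal G}(h)$ becomes an iterated oscillatory integral in $(z,\zeta)$ with total phase $\Phi(y,z,\zeta,y') = (y-z)\zeta + |q(z)-q(y')|$.

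Critical points of $\Phi$ satisfy $z=y$ and $\zeta = \partial_z|q(z)-q(y')|\bigl|_{z=y}$; this $\zeta$ is the tangential component at $q(y)$ of the unit chord vector from $q(y')$ to $q(y)$, and together with $\xi' = -\partial_{y'}|q(y)-q(y')|$ these equations are precisely the defining condition for $(y,\zeta;y',\xi')$ to lie on the graph $\Lambda_{\beta}$ of the billiard map. Thus $WF_h'(N_{jk}^{\mathcal G})$ is contained in the portion of $\Lambda_{\beta}$ meeting the support of all cutoffs. For the non-adjacent case $k \notin \{j-1,j+1\}$, convexity of $\Omega$ and the absence of a shared corner between $\Gamma_k$ and $\Gamma_j$ imply that any chord from $\Gamma_k$ to $\Gamma_j$ meets $\Gamma_j$ at an angle bounded below by some $\theta_0 = \theta_0(\Omega) > 0$, so $|\zeta| \leq \cos\theta_0 < 1$ at every critical point. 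Choosing $\epsilon_0$ with $1 - 2\epsilon_0 > \cos\theta_0$ forces $\chi_j(\zeta) \equiv 0$ there, and non-stationary phase integration in $\zeta$ shows the full kernel is $O(h^{\infty})$ on $\mathring{\Gamma_j} \times \mathring{\Gamma_k}$, giving $WF_h'(N_{jk}^{\mathcal G}) = \emptyset$.

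For the adjacent case $k \in \{j-1, j+1\}$, let $c$ be the corner shared by $\Gamma_j$ and $\Gamma_k$ and $\alpha$ the corresponding interior angle. The cutoff $(1-\psi_j^{\delta})(y)$ forces $|q(y) - c| \geq C_1 h^{\delta}$, and applying \eqref{transversal} to any resulting near-glancing critical point $(y,\zeta)$ with $\bigl||\zeta|_y - 1\bigr| \leq 2\epsilon_0$ yields $|q(y')-c| \leq C_2 \epsilon_0 |q(y)-c|$ and $\bigl||\xi'|_{y'} - 1\bigr| \geq C_2$. A direct computation of the billiard reflection off $\Gamma_k$ in the glancing limit (the limiting reflected ray emerges from $c$ along $\Gamma_j$) gives $|\xi'|_{y'} \to \cos(\pi-\alpha)$ as $|\zeta|_y \to 1$, and continuity of $\beta$ away from the glancing set on $\Gamma_k$ provides the quantitative refinement $\bigl||\xi'|_{y'} - \cos(\pi-\alpha)\bigr| \leq C_3 \epsilon_0$. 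The remaining support conditions on $(y,y')$ come from tracking the cutoffs through the stationary analysis combined with the proportionality bound coming from \eqref{transversal}.

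The main obstacle is technical rather than conceptual: verifying the semiclassical FIO composition calculus with the nonstandard small-scale $S^0_{\delta}$ symbol classes that appear here, namely the cutoffs $\psi_j^{\delta}$ and $\psi_k^{2\delta}$, the amplitude $c$ obeying the bounds \eqref{symbolbounds}, and the mildly singular, only piecewise-smooth factor $a$ from \eqref{symbolbounds0}. Since $2\delta < 1$, the standard semiclassical stationary and non-stationary phase estimates apply uniformly in these classes, but the relevant remainder bounds must be tracked carefully throughout so that all $O(h^{\infty})$ error claims remain valid up to the corner-region boundary of validity set by the cutoffs.
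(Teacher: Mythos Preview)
Your proposal is correct and follows essentially the same approach as the paper: replace $N(h)$ by its WKB kernel via \eqref{ASYMP1}--\eqref{ASYMP2}, write $\chi_j(hD)$ as an oscillatory integral, and apply semiclassical stationary phase to read off $WF_h'$ as the portion of the billiard graph $\Lambda_{\beta}$ meeting the various cutoffs, with the geometric conclusions supplied by \eqref{transversal} and convexity. The paper organizes the computation slightly differently---first doing stationary phase in the $(s',\xi)$ variables coming from $\chi_j(hD)$ (which, crucially, does not differentiate the small-scale cutoffs $(1-\psi_j^{\delta})$ or $(1-\psi_k^{2\delta})$), and then composing with a test operator $\chi_k^{tr}(t,hD_t)$ and doing a second stationary phase in $(t,\eta)$ to pin down the outgoing frequency---but the content is the same as your one-step reading of the critical set. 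One small slip: the kernel-support separation forced by the cutoffs is actually $|q-q'|\gtrapprox h^{\delta}$ (driven by $(1-\psi_j^{\delta})$ on the $\Gamma_j$ side), not $h^{2\delta}$, though this is harmless for invoking the WKB expansion.
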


\begin{proof}
 First, note that for the geometric transfer operators
 $N_{jk}^{\mathcal G},$ in view of the corner cutoffs
 $(1-\psi^{\delta})$ and $(1-\psi^{2\delta})$  appearing  in the
 definition in (\ref{geo/diff defn}), it follows that  the
 Schwartz kernel 

 $$  \text{supp} \, S.K. N_{jk}^{\mathcal G}  \subset \{(q,q') \in \mathring{\Gamma_j} \times \mathring{\Gamma_k} ; |q-q'| \gtrapprox h^{\delta} \}; \quad k \neq j, $$ 
so that, modulo $O_{C^\infty}(h^{\infty})$-errors, one can use the WKB
type formula in (\ref{inteqn2}) for $N(q,q',h).$ Let $[0, \ell_j] \ni s
\to q(s)$ with $\ell_j = | \Gamma_j |$ be arclength parametrization of $\Gamma_j$ and $[0,\ell_k] \ni t \to q(t)$ be arclength parametrization of a boundary edge $\Gamma_k$ with $k \neq j.$ Then, in terms of these parametrizations the geometric transfer operator $N_{jk}^{\mathcal G}(h) = \chi_j(hD) (1-\psi_j^{\delta}) 1_{\Gamma_j} N(h) (1-\psi_k^{2\delta}) 1_{\Gamma_k}$ has Schwartz kernel of the form
 
  \begin{equation} \label{ibp1}
 (2\pi h)^{-3/2} \int \int e^{i  [ (s-s') \xi + |q(s') - q(t)| ] /h} \chi_j(\xi) (1-\psi_j^{\delta})(s,h) c(s',t,h) (1-\psi_k^{2\delta})(t,h) \, ds'  \, d\xi. \end{equation} \
 
 An application of stationary phase in (\ref{ibp1}) in the $(s',\xi)$-variable gives
 \begin{equation} \label{ibp2}
 N_{jk}^{\mathcal G}(s,t,h)= (2\pi h)^{-1/2}  e^{i |q(s)-q(t)|/h} (1-\psi_j^{\delta})(s,h)  \tilde{c}(s,t,h)  (1-\psi_k^{2\delta})(t,h)+ O(h^{\infty}), \end{equation}\
 where (see (\ref{symbolbounds})),  
  $$| q(s) - q(t) |^{1/2} \tilde{c}(s,t,h) \in S^{0}_{\delta}(1),$$
 $$ | q(s) - q(t) |^{1/2}  \tilde{c}(s,t,h) \sim \sum_{m=0}^{\infty}
 c_m \Big( \frac{h}{|q(s)-q(t)|} \Big)^{m},$$ 
 and with $\rho(s,t) = \frac{q(s)-q(t)}{|q(s)-q(t)|},$
 
 $$\tilde{c}(s,t,h) - c(s,t,h) \, \chi_j ( \rho(s,t) ) \in h S^{0}(1),$$
 
  $$\text{supp} \, \tilde{c} \subset \{ (s,t);  \big\langle \rho(s,t), d_s q(s) \big\rangle = 1 + O(\epsilon_0) \}.$$\
 
Note that stationary phase in $(s',\xi)$ does not involve differentiation of the small-scale cutoffs $(1-\psi_j^{\delta})$ and $(1-\psi_k^{2\delta})$, so there are no additional powers of $h^{-\delta}$ appearing in (\ref{ibp2}). We note that the support condition $\big\langle \rho(s,t), q'(s) \big\rangle = 1 + O(\epsilon_0)$ above on $\tilde{c}$ implies  by convexity of $\Omega$ that
$$(t,s) \in \text{supp} \, \tilde{c} \implies  |q(t)-c_k|  = O(\epsilon_0) |q(s)-c_k|, \quad k=j-1,j+1.$$
In particular, modulo $O(h^\infty)$ error it is enough to take $k=j-1,j+1$ in the sum (\ref{geo/diff defn}).

 Next, let $\chi_k^{tr} \in C^{\infty}_0 (\mathring{B}^*\Gamma_k)$ along an adjacent side. Then,  in view of (\ref{ibp2}),

\begin{align}
N^{\mathcal G}_{jk}(h)&  \chi_k^{tr}(t,hD_t) (s,t') \hspace{3.7in}
\notag \\ 
& = (2\pi h)^{-3/2} \int \int e^{i [ |q(s)-q(t)|  + (t-t') \eta ] /h}
\tilde{c}(s,t,h) (1-\psi_k^{2\delta})(t,h) \chi_k^{tr}(t,\eta) \, dt
d\eta \notag \\ & + O(h^{\infty}). \label{ibp3} \end{align}

We apply stationary phase in (\ref{ibp3}) in the $(t,\eta)$-variables. Since there is a 2-microlocal cutoff $(1-\psi_k^{2\delta})(t,h)$ that gets differentiated in the process, one must take some additional care at this point. The formal expansion of the RHS in (\ref{ibp3}) is then of the form
\begin{equation} \label{compare1}
(2\pi h)^{-1/2} e^{i |q(s) - q(t')|/h} \Big( \sum_{m=0}^{\infty}
  \frac{ (hD_t D_\eta)^{m}}{m ! }  [ \tilde{c}(s,t,h) (1-\psi_k^{2\delta})(t,h) \chi_k^{tr}(t,\eta) ] |_{t=t',\eta = \langle \rho(t',s), d_{t'} \rangle} \Big) \end{equation}\

We note that at most $m$ derivatives in $t$ hit the cutoff $(1-\psi_k^{2\delta})$ and each $D_t$ derivative  creates  a factor of $h^{-2\delta},$ so that the error term in (\ref{compare1}) at level $m$ is $O(h^{(1-2\delta)m})$ and it is then standard to show that  the asymptotic expansion is indeed legitimate (see subsection \ref{statphase} for a closely-related argument). It follows that

\begin{equation} \label{compare2}
\eta = \langle \rho(t',s), d_{t'}q(t') \rangle, \quad \langle \rho(t',s), d_s q(s) \rangle = 1 + O(\epsilon_0). \end{equation}\

Since the edges $\Gamma_j$ and $\Gamma_k$ intersect at angle $\alpha_k$, (\ref{compare2}) implies that
$$\eta = \cos(\pi -\alpha_k) + O(\epsilon_0),$$ 
and that completes the proof of the lemma.

\end{proof}

As a consequence of Lemma \ref{MCL},  by choosing $\epsilon_0 >0$ sufficiently small in the support of  the frequency cutoff $\chi_j(\xi'),$ it is natural to introduce a corresponding frequency cutoff $\chi_k^{tr} \in C^{\infty} (B^*\Gamma_k)$ supported transversally along the edge $\Gamma_k$. Specifically we choose $\chi_k^{tr} \in C^{\infty}(B^* \Gamma_k)$ such that

 $$  \text{supp}  \, \chi_{k}^{tr} \subset  \{ (q',\eta) \in   B^* \mathring{\Gamma_k} ;   |\eta| = \cos(\pi -\alpha_k) + O(\epsilon_0) \}.$$ \
  
 In addition, let $\chi_{\epsilon_0} \in C^{\infty}_{0}(\R)$ be a cutoff with $0 \leq \chi_{\epsilon_0} \leq 1$ and
 $\chi_{\epsilon_0}(u) = 1 $ for $|u| \leq \epsilon_0$ with
 $\chi_{\epsilon_0}(u) = 0$ for $|u|  \geq 2\epsilon_0.$  Next, setting
\begin{equation} \label{transcutoff} 
 \chi^{tr}_{k,q}(q',\eta):= \chi_k^{tr}(q',\eta) \, \, \chi_{\epsilon_0} \Big( \frac{|q'-c_k|}{|q-c_k|} \Big), 
\end{equation}
we have, in view of Lemma \ref{MCL},

\begin{eqnarray} \label{inteqn0.7}
N_{j}^{\mathcal G}(h) u_h
  =  \sum_{k=j-1}^{j+1} \chi_j(q,hD)  \, (1-\psi_j^{\delta}(q,h)) {\bf 1}_{\Gamma_j} N(h) \, \chi^{tr}_{k,q} (q',\eta) \, (1- \psi_k^{2\delta}(q',h) {\bf 1}_{\Gamma_k}   u_h \nonumber \\
 + O(h^{\infty}). \,\,\,\,
\end{eqnarray}

At this point, it is useful to introduce some additional transfer operators that are closely related to $N_{jk}^{\mathcal G}$ and $N_{jk}^{\mathcal D}$ (see (\ref{geo/diff defn}). Let $N_{jk}(h) : C^0(\Gamma_j) \to C^0(\Gamma_k)$ be the {\em transfer operator} with Schwartz kernel

\begin{eqnarray} \label{TRANSFER}
N_{jk}(q,q',h) := \chi_j(q,hD)  (1-\psi_j^{\delta})(q,h) N(q,q',h) ; \, \, (q,q') \in \Gamma_j \times \Gamma_k. \end{eqnarray}\

   We note that since the cutoff $\psi_{k}^{2\delta}(t)$ in the incoming $t$-variables is unaffected by differentiation in the $(s',\xi)$-variables in the stationary phase argument in (\ref{ibp1}), it follows  that  when $ (q,q') \in \Gamma_j \times \Gamma_k,$
 \begin{eqnarray} \label{TRANSFERcut}
N_{jk}(q,q',h) 
= \chi_j(q,hD)  (1-\psi_j^{\delta})(q,h) N(q,q',h) \, \chi_{\epsilon_0} \Big( \frac{|q'-c_k|}{|q-c_k|} \Big)  + O(h^{\infty}).\end{eqnarray}\

The point behind (\ref{TRANSFERcut}) is that near-glancing rays to
$\Gamma_j$ intersect adjacent sides $\Gamma_k$ {\em only} and do so
transversally with $|q'-c_i| \lessapprox \epsilon_0 |q-c_k|.$ However,
since $N_{jk}(h)$ incorporates {\em both} diffractive and geometric
terms,  
the transversal cutoff $\chi_k^{tr}(q',hD); \, k=j-1,j+1$ cannot be added in (\ref{TRANSFERcut}) in contrast with the geometric transfer operators $N_{jk}^{\mathcal G}(h)$ in Lemma \ref{MCL}.

In summary, we collect here for future reference the simple relation between the various transfer operators:

\begin{eqnarray} \label{TRANSFERrelate}
N_{jk}^{\mathcal G}(h) = N_{jk}(h)  \chi^{tr}_{k}(q',hD) (1-\psi_{k}^{2\delta})(h), \hspace{.1in} \\ \nonumber \\ \nonumber
N_{jk}^{\mathcal D}(h) = N_{jk}(h) \psi_{k}^{2\delta}(h). \hspace{1.2in}
  \end{eqnarray}\

\begin{remark} Note that in (\ref{inteqn0.7}), the sum on the RHS is only over the sides $\Gamma_{k}; k=j-1,j+1$  adjacent to the flat side $\Gamma_j$ and $\chi^{tr}_{k}(q',\eta)$ in (\ref{transcutoff})  a uniformly  {\em transversal} frequency cutoffs along an adjacent side $\Gamma_k$   with supp \, $\chi_k^{tr} \subset \{(q',\eta) \in B_0^* (\Gamma_k); | \eta - \cos(\pi -\alpha_k) | = O(\epsilon_0) \}$ and
 $ \cos (\pi -\alpha_k) + O(\epsilon_0) < 1$ for $\epsilon_0 >0$ small enough.
The heuristics here are quite simple: the  formula in  (\ref{inteqn0.7}) is a consequence of the fact that, due to the convexity of $\Omega,$ the ray corresponding to  $\xi \in \text{supp} \,\chi_j $ sufficiently close to glancing along $\Gamma_j$ (i.e. with $\epsilon_ 0 >0$ sufficiently small), necessarily hits only the sides $\Gamma_k; k=j-1,j+1$ adjacent to the flat side $\Gamma_j$. Moreover, all such {\em near-glancing} rays to $\Gamma_j$ hit the adjacent sides $\Gamma_{j-1}$ and $\Gamma_{j+1}$ at a distance $\lessapprox \epsilon_{0} $ to a common corner and reflect in a  (uniformly in $\epsilon_0$) {\em transversal} direction to the adjacent side roughly at angle $\pi - \alpha_k$ when $\epsilon_0 >0$ is small (see Figure 4). \end{remark}

To summarize, from (\ref{geo/diff}) and (\ref{inteqn0.7}) we have shown that

\begin{eqnarray} \label{decompupshot}
(1-\psi_j^{\delta}) \chi_j(hD) u_j  \hspace{4in} \nonumber \\
= \sum_{k=j-1}^{j+1} \chi_j(q,hD)  \, (1-\psi_j^{\delta}(q,h)) {\bf 1}_{\Gamma_j} N(h) \,  \chi_{k}^{tr}(q',hD)) \,  (1- \psi_k^{2\delta}(q',h) {\bf 1}_{\Gamma_k}   u_h \nonumber \\
+ N_{j}^{\mathcal D}(h) u_j + O(h^{\infty}). \hspace{3in}\end{eqnarray}\

Then, by choosing another transveral cutoff $\zeta_{k} \in C^{\infty}_0(\mathring{B}^*\Gamma_k)$ with $\zeta_{k} \Supset \chi_{k}^{tr},$ and so that supp  $ \zeta_k  \subset \{ (q,\eta) \in B_0^* \Gamma_k;   | \eta - \cos(\pi-\alpha_k) | = O(\epsilon_0) \},$   the microlocal decomposition in (\ref{decompupshot}) can be written in terms of the transfer operators $N_{jk}$ in (\ref{TRANSFERcut})  as follows:

\begin{eqnarray} \label{decompupshot2}
(1-\psi_j^{\delta}) \chi_j(hD) u_j   \hspace{3in}\nonumber \\
= \sum_{k=j-1}^{j+1} N_{jk}^{\mathcal G}(h) \, {\bf 1}_{\Gamma_k}   u_h 
+ N_{j}^{\mathcal D}(h) u_j + O(h^{\infty}) \hspace{1.7in} \nonumber \\
= \sum_{k=j-1}^{j+1} N_{jk}(h) \, \zeta_{k}(q',hD)  (1- \psi_k^{2\delta}(q',h)) {\bf 1}_{\Gamma_k}   u_h 
+ N_{j}^{\mathcal D}(h) u_j + O(h^{\infty}). \end{eqnarray}\

\begin{remark}
Setting $\delta = 1/2-0$ in (\ref{decompupshot2}), our aim here is to show that
\begin{equation} \label{keybound}
 \| (1-\psi_{j}^{\delta}(h)) \chi_j(hD) u_h^j \|_{L^2(\Gamma_j)} = O(h^{-\delta/2}) = O(h^{-1/4-0}). \end{equation}\
 
When $\rho \in C^{\infty}(\mathring{\Gamma_k})$ is supported away from corners, a standard Rellich commutator argument (see Lemma \ref{rellich}) shows that with a transversal frequency cutoff $\zeta_k \in C^{\infty}_{0}(\mathring{B}^*\Gamma_k),$ one has $\| \rho(y) \zeta_k(hD_y) \|_{L^2(\Gamma_k)} = O(1).$ Unfortunately, 
 since $(1-\tilde{\psi}_{k}^{2\delta}(h)$ is only supported outside an $h^{2\delta}$-neighbourhood of  a corner, at present, we cannot  rule out blowup  in $h$; indeed, from Lemma \ref{rellich}, at present  the best bound we can get near corners is of the form
 $ \| (1-\psi_k^{2\delta}) \zeta_k(hD_y) u_h \|_{L^2(\Gamma_k)} = O(h^{-\delta/2}) = O(h^{-1/4-0}).$ Unfortunately, as we show in section \ref{transfer} the transfer operators $N_{jk}(h)$ are {\em singular} $h$-FIO's associated with one-sided folds and with small-scale (in $h$) symbols. As a result, they are {\em not} bounded in $L^2.$ We show in section \ref{transfer} that
 $ \| N_{jk}(h) \|_{L^2 \to L^2} = O(h^{-1/4-0}).$ Consequently, the naive estimate  for the geometric term in (\ref{decompupshot2}) is
 $$ \| N_{j}^{\mathcal G}(h) u_h \|_{L^2(\Gamma_j}) = O(1) \| N_{jk}(h) \|_{L^2 \to L^2} \,\,  \| \zeta_{k}(q',hD)  (1- \psi_k^{2\delta}(q',h)) {\bf 1}_{\Gamma_k}   u_h  \|_{L^2(\Gamma_k)}  $$
 $$= O(h^{-1/2-0}).$$
 This is just the Sobolev bound and is too crude to be useful.
 
  To deal with this problem, we use the jumps equation $u_h = N(h) u_h$ in the geometric term on the RHS of  (\ref{decompupshot2}) yet again to reflect  near-glancing rays to $\Gamma_j$  hitting $\Gamma_k$  away from the corners  along the adjacent edge $\Gamma_k$ (see Figure 4).
 The point is that by choosing $\epsilon_0>0$ sufficiently small, under the admissibility assumption on the corner angles, these reflected rays have the property  that they next intersect the boundary $\partial \Omega$ transversally in the interior of the boundary  away from a {\em fixed} (in $h$) neighbourhood of the corners. The latter (transversal) $L^2$ mass is then shown to be $O(1)$ by Lemma \ref{rellich}. We now carry out the details of this additional step.
 
 \end{remark}

Inserting the jumps equation $u_h^{\partial \Omega} = N(h) u_h^{\partial \Omega}$ yet again in the first (geometric) term on the RHS of (\ref{decompupshot2})  gives

 \begin{eqnarray} \label{inteqn2.0}
\chi_j(h D)  (1-\psi_{j}^{\delta}(h)) u_h^j  = \sum_{k=j-1,j+1} N_{jk}^{\mathcal G}(h) u_h^k  + N_{j}^{\mathcal D}(h)  u^{\partial \Omega}_h   +  O(h^{\infty})  \quad \nonumber \\
=  \sum_{k=j-1}^{j+1} N^{\mathcal G}_{jk}(h)   \, N(h) \, \, u_h^{\partial \Omega}
+ O( \| N_{j}^{\mathcal D}(h)  u_h^{\partial \Omega}\|_{L^2} )   + O(h^{\infty}). \end{eqnarray}\

 The diffractive term $\| N_j^{\mathcal D}u_j \|$ is easier to estimate, so we defer this to section \ref{diffraction}. We first address the problem of bounding the geometric term. As we have already indicated earlier, the main point behind using the jumps equation $N(h) u_h = u_h$ yet again in (\ref{inteqn2.0}) is that near-glancing rays to the flat edge $\Gamma_j$ intersect near corners (and Rellich estimates in $h^{\delta}$-nbds of corners are quite delicate). To avoid this issue, by inserting the jumps equation yet again (and using admissibility assumption), one essentially reflects away these rays to the interior of the boundary $\partial \Omega$ far from any corners. The latter can then estimated by the  standard Rellich argument in Lemma \ref{rellich} (see also Figure 4). 

Next, we split up   the RHS of (\ref{inteqn2.0})  by inserting additional  cutoffs $\tilde{\psi}^{2\delta}(h)$ in (\ref{inteqn2.0}) where $\tilde{\psi}^{2\delta} = \sum_{k} \tilde{\psi}_k^{2\delta} $ with $\tilde{\psi}_k^{2\delta} \Subset \psi_k^{2\delta}$ is supported in the union of $h^{2\delta}/2$-radius balls centered at each of the corners and $\tilde{\zeta} \in C^{\infty}(\mathring{B}^* \Gamma_k)$ with $\tilde{\zeta}_k \Supset \zeta_k.$  We continue to choose this frequency cutoff so that supp \, $\tilde{\zeta}_k  \subset \{ (q,\eta) \in B_0^* \Gamma_k;   | \eta - \cos(\pi-\alpha_k) | = O(\epsilon_0) \}.$ At this point,  the admissibility assumption in Defintion \ref{admissible} will play a crucial role to ensure that rays reflected in the adjacent edge $\Gamma_k$ intersect $\partial \Omega$ away from corners.

From (\ref{inteqn2.0}), we can write 
\begin{align} 
\| \chi_j & (h D)  (1-\psi_{j}^{\delta}(h)) u_h  \|_{L^2(\Gamma_j)}  \notag \\
 & \leq \sum_{k=j-1}^{j+1} \| N_{jk}^{\mathcal G}(h) \|_{L^2 \to L^2} \,  \Big(  \| \tilde{\zeta}_k(h D) \, (1-\tilde{\psi}_{k}^{2\delta}(h)) \, {\bf 1}_{\Gamma_k} \, N(h) \, (1 - \tilde{\psi}^{2\delta}(h)) u_h \|_{L^2(\Gamma_k)}   \notag \\
 & \quad +  \| \tilde{\zeta}_k(h D) \, (1-\tilde{\psi}_{k}^{2\delta}(h)) \, {\bf 1}_{\Gamma_k} \, N(h) \,  \tilde{\psi}^{2\delta}(h) u_h \|_{L^2(\Gamma_k)} \, \Big) \notag \\
 & \quad + O(  \| N_j^{\mathcal D}(h) u_h \|_{L^2}) +
 O(h^{\infty}). \hspace{2.4in}
\label{CRUCIAL}
\end{align}

 In (\ref{CRUCIAL}) and below, we write $\| N_{jk}(h) \|:= \| N_{jk}(h) \|_{L^2 \to L^2}$ We now estimate each of the two geometric terms on the RHS of (\ref{CRUCIAL}) separately and then bound $\| N_j^{\mathcal D}(h)u_h \|$ separately in section \ref{diffraction}.

To bound the terms on the RHS of (\ref{CRUCIAL}) it is convenient to introduce some notation at this point. We set

\begin{eqnarray} \label{qops}
Q_1(h):=  \zeta_k(hD) \, (1- \psi_k^{2\delta}(h)) N(h) (1-\tilde{\psi}^{2\delta}(h)), \nonumber \\ \nonumber \\
Q_2(h):=  \zeta_k(hD) \, (1- \psi_k^{2\delta}(h)) N(h) \tilde{\psi}^{2\delta}(h)). \hspace{1cm}\end{eqnarray} \

Next, we further decompose the operators $Q_{1,2}(h)$ into {\em
  near-diagonal} and {\em off-diagonal} terms as follows: Let $\chi_M
\in C^{\infty}_{0}(\R^2), \,\, 0 \leq \chi \leq  1$  with $\chi_M(x) = 1$ when $|x| < \frac{1}{M}$ and
$\chi_M(x) = 0$ for $|x| \geq \frac{2}{M}.$  Here, we choose $M>0$
large enough so that $|q(y)-q(y')| \leq \frac{1}{M} h^{2 \delta}$ and
$(y,y') \notin \text{supp}  \,  \psi^{2\delta}_k \,    \times  \, \text{supp} \, \tilde{\psi}^{2\delta}
 $ implies that $(q,q') \in \Gamma_k \times \Gamma_k$ (i.e. both
points lie along the same edge, $\Gamma_k.$)  We then decompose the operators $Q_{1,2}(h)$ by writing  
$$Q_j(h) = Q_{j}^{(1)}(h) + Q_{j}^{(2)}(h); \quad j=1,2,$$
such that
$$ Q_1^{(j)}(h) = \zeta_k(hD) N_{1}^{(j)}(h), \quad j=1,2,$$
where
\begin{align} \label{n1}
 N_1^{(1)}(z,y',h) & =  \Big[ (1- \psi_k^{2\delta}(h)) N(h) (1-\tilde{\psi}^{2\delta}(h)) \Big](z,y')  \cdot \chi_M( h^{-2\delta}(q(z)-q(y')) ), \\
N_1^{(2)}(z,y',h) & = \Big[ (1- \psi_k^{2\delta}(h)) N(h)
  (1-\tilde{\psi}^{2\delta}(h) )\Big](z,y') \cdot (1 - \chi_M)(
h^{-2\delta}(q(z)-q(y')) ).\notag  \end{align}

Similarly, 
$$Q_2^{(j)}(h) = \zeta_k(hD) N_2^{(j)}(h): \quad j=1,2,$$
where
\begin{eqnarray} \label{n2}
N_2^{(1)}(z,y',h) = \Big[ (1- \psi_k^{2\delta}(h)) N(h) \tilde{\psi}^{2\delta}(h) \Big] (z,y')\cdot \chi_M( h^{-2\delta}(q(z)-q(y')) ), \nonumber \\
 N_2^{(2)}(z,y',h) = \Big[ (1- \psi_k^{2\delta}(h)) N(h) \tilde{\psi}^{2\delta}(h) \Big](z,y') \cdot (1- \chi_M)( h^{-2\delta}(q(z)-q(y')) ). \end{eqnarray} \

We note here that by choosing $M \gg 1,$

\begin{equation} \label{no kernel}
N_2^{(1)}(z,y',h) = 0 \end{equation}
and so, without loss of generality it suffices to consider only the $N_2^{(2)}$-term when considering $Q_2(h)$.

\subsubsection{ Estimating  $\| Q_1(h) u_h \|$} \label{statphase}

\noindent  {\bf $Q_1^{(2)}$-term:}    We start with analysis of the
$Q_1^{(2)}$-term.   Since in this case, $|q(z)-q(y')| \gtrapprox h^{2\delta}$  for $(z,y') \in \text{supp} \, Q_1^{(2)}(\cdot,\cdot),$ modulo $O_{C^{\infty}}(h^{\infty})$-error, it follows from (\ref{inteqn2})  and Lemma \ref{MCL} that in terms of parametrizing coordinates with $q=q(z) \in \Gamma_k,$  $q'=q(y') \in \partial \Omega, \rho(z,y') = \frac {q(z)-q(y')}{|q(z)-q(y')|},$ and with

\begin{equation} \label{localize}
\Theta:= \big\{ (z,y'); \,\, \big\langle d_z q(z), \rho(z,y') \big\rangle = \cos (\pi - \alpha_k) + O(\epsilon_0), \,\, |q(z)-c_j|  = O(\epsilon_0) |q(y')-c_k| \,\, \big\},\end{equation}

We claim that for $\epsilon_0 >0$ sufficiently small,

\begin{equation} \label{reflect away}
\inf_{ \{ (q(z),q(y')) \in \Gamma_k \times \partial \Omega; \, (z,y') \in  \Theta \} } \, |q(z) - q(y')| \geq C(\epsilon_0) >0.\end{equation} \

To prove (\ref{reflect away}), we note that when we fix $q(z) = c_k,$ a corner point adjacent to the flat side $\Gamma_j,$ and $q(y') \in \partial \Omega$ is the boundary intersection of a formally reflected tangential ray along $\Gamma_j,$ 
the estimate in (\ref{reflect away}) follows by convexity of $\Omega$ and the admissiblity assumption (see also Figure 4). For $\epsilon_0 >0$ small, (\ref{reflect away}) then follows  for general $(z,y') \in \Theta$ by continuity of the billiard map since we reflect near-glancing rays along $\Gamma_j$ in the adjacent side $\Gamma_k$ near the corner.

\noindent Thus, with $q(y) \in \Gamma_k, \,\, q(z) \in \Gamma_k$ and $q(y') \in \mathring{\partial \Omega},$ we have

\begin{align} \label{q1.1}
  Q_1^{(2)}& (y,y',h) \\
  & = (2\pi h)^{-1/2-1} \int_{\R} \int_{\partial \Omega} e^{i [ (y-z) \xi'  + |q(z) - q(y')|]/h} \, \zeta_k(y,\xi') \, \chi_{\Theta}(z,y') \,  c(z,y';h)  \notag \\
& \times  \, \, (1-\psi_k^{2\delta})(z,h) \,
\  (1-\tilde{\psi}^{2\delta})(y',h) \, \, \big(1-\chi_M( h^{-2\delta}
( |q(z)-q(y'| ) \big) \, dz \, d\xi' + O(h^{\infty}).\notag \end{align}

In (\ref{q1.1}), the cutoff $\chi_{\Theta} \in C^{\infty}(\Gamma_k \times \partial \Omega)$ with $0 \leq \chi_{\Theta} \leq 1$ such that $\chi_{\Theta}(z,y') = 1$ in a $C \epsilon_0$-width tubular neighbourhood of the manifold $\Theta$ in (\ref{localize}) with $C>0$ sufficiently large and $\chi_{\Theta}(z,y') = 0$ outside a $2 C \epsilon_0$-width tubular neighbourhood. 

From now on, we choose the arclength parametrization of the boundary so that $|d_y q(y)| =1$ for all $ y \in \mathring{\partial \Omega}.$
Here, we recall from  (\ref{symbolbounds}) that the symbol $c$ in (\ref{q1.1}) satisfies the estimates  $|q(z)-q(y')|^{1/2} \partial_{z,y'}^{\alpha} c (z,y',h) = O_{\alpha} (|q(z)-q(y')|^{-|\alpha|})$ and note that for the phase function
\begin{equation}
  \label{E:stationary}
 \phi(z;y,y',\xi):=  (y-z) \xi'  + |q(z) - q(y')|, \quad d_{z} \phi = \Big\langle d_z q(z), \frac{q(z)-q(y')}{|q(z)-q(y')|} \Big\rangle - \xi'.\end{equation}
Consequently, $d_z \phi =0$ if and only if $\xi' = \langle d_z q(z), \frac{q(z)-q(y')}{|q(z)-q(y')|} \rangle =  \langle d_z q(z), \rho(y',z) \rangle .$

It follows from (\ref{reflect away}) that with $\epsilon_0 >0$ sufficiently small,

\begin{equation} \label{SIMP1}
 \min_{(y',z) \in \text{supp} \, \chi_{\Theta}}   |q(y') - q(z)| \geq C_1 >0. \end{equation} \

We  also note that for $\epsilon_0 >0$ sufficiently small and with ${\mathcal C}$ denoting the corner set, under the admissibility assumption in Definitiion \ref{admissible} and for $\epsilon_0 >0$ sufficiently small, we claim that there exist a constant $C >0$ (uniform in $\epsilon_0$) such that for the cutoff $\chi_{\Theta}$ in (\ref{q1.1}), one also has

\begin{equation} \label{SIMP2}
\min_{ (z,y') \in \text{supp} \, \chi_\Theta }  \text{dist} \, ( q(y'), {\mathcal C} ) \geq C_2 >0.\end{equation}\

To prove (\ref{SIMP2}), we note that since the billiard map $\beta: B^*\partial \Omega \to B^*\partial \Omega$ is piecewise $C^{\infty},$ in view of the admissibility assumption,  it follows that  $\pi ( \beta (z,\xi') ) \subset \mathring{\partial \Omega}$ provided $|q(z) -c_j| = O(\epsilon_0)$ and $|\xi' - \cos (\pi-\alpha_k)| = O(\epsilon_0)$ with $\epsilon_0 >0$ sufficiently small. Thus, (\ref{SIMP2}) follows from the definition of $\Theta$ in (\ref{localize})  again, by choosing $\epsilon_0>0$ sufficiently small, since
$\text{dist} ( \, \text{supp}\, \chi_{\Theta},  \, \Theta \, ) \leq C \epsilon_0.$
Since $d_z \phi = \xi' -  \langle d_z q(z), \rho(y',z) \rangle,$ in view of (\ref{SIMP1}) and (\ref{SIMP2}) it then follows by repeated integrations by parts in $z$ that,

\begin{align} \label{q1.11}
  Q_1^{(2)}& (y,y',h) \\
  & = (2\pi h)^{-1/2-1} \int_{\R} \int_{\partial \Omega} e^{i [ (y-z)
      \xi'  + |q(z) - q(y')|]/h} \, \zeta_k(y,\xi') \,
  \chi_{\Theta}(z,y') \,  c(z,y';h) \notag \\
& \times  \, \, (1-\psi_k^{2\delta})(z,h) \, \  (1-\tilde{\psi})(y',h)
  \, \, \big(1-\chi_M((|q(z)-q(y')|) \big) \, dz \, d\xi' +
  O(h^{\infty}).  \notag \end{align} 

The point here is that  in view of (\ref{SIMP1}) and (\ref{SIMP2}),  $q(y') \in \text{int} \, \partial \Omega$ with dist \, $(q(y'), {\mathcal C}) \gtrapprox 1$ and also $|q(y') - q(z)| \gtrapprox 1.$  Thus, the small-scale cutoff $$(1-\tilde{\psi}^{2\delta})(y',h) \, \, \big(1-\chi_M( h^{-2\delta} ( |q(z)-q(y')|)  \, )  \big)$$ gets replaced with $$(1-\tilde{\psi})(y',h) \, \, \big(1-\chi_M( |q(z)-q(y')|) ) \big)$$ which is clearly in $S^{0}(1)$ (in fact, it is independent of $h$.).

It will be useful in the following to separate-out the standard $S^0(1)$-part of the amplitude in  (\ref{q1.11}) and define

\begin{equation} \label{S0}
c_{reg}(z,y',h):=  \chi_{\Theta}(z,y') \,  c(z,y';h)  \,  (1-\tilde{\psi})(y',h) \, \big(1-\chi_M((|q(z)-q(y')|) \big) \end{equation} 
where clearly $c_{reg} \in S^0(1).$ Thus, we simply rewrite (\ref{q1.11}) in the form

\begin{eqnarray} \label{Q1.1}
Q_1^{(2)}(y,y',h) = (2\pi h)^{-1/2-1} \int_{\R} \int_{\partial \Omega} e^{i [ (y-z) \xi'  + |q(z) - q(y')|]/h} \, \zeta_k(y,\xi') \,   c_{reg}(z,y';h)  \nonumber \\
\times  \, \, (1-\psi_k^{2\delta})(z,h) \,  dz \, d\xi' + O(h^{\infty}).\end{eqnarray} \

 In (\ref{Q1.1}) we  note that $ (hD_{z})^{\beta} ( 1-
 \psi^{2\delta})(z,h) = O(h^{|\beta| \epsilon'})$  since $2\delta =
 1-\epsilon'$ and also $(h D_{z})^{\beta} c(z,y',h) = |q(z) - q(y')|^{-1/2} \, O( h^{|\beta|} |q(z)-q(y')|^{-|\beta|} ) = O(h^{-1+\epsilon'}) O(h^{|\beta| \epsilon' })$ since $|q(z)-q(y')| \gtrapprox h^{1-\epsilon'}$ for $(z,y')$ in the support of the amplitude in the integral (\ref{Q1.1}).  Thus, by Leibniz rule,
 
 $$ (hD_z)^\beta \big( \, (1-\psi^{2\delta})(z,h) \, c_{reg}(z,y',h) \, \big) = O_{\beta} (h^{-1+\epsilon' + |\beta| \epsilon'}).$$\

 We also note that by convexity, $\cos \alpha_k <1,$ and so, for $\epsilon_0 >0$ sufficiently small (but independent of $h$), the   transversality conditions 

$$ \max ( \, \langle \rho(z,y'), d_{y'} q(y') \rangle,  \langle \rho(z,y'), d_{z} q(z) \rangle\leq \frac{1}{C_3(\epsilon_0)} < 1$$
also follows from (\ref{localize}) and convexity of $\Omega,$ where we recall that  $\rho(z,y')= \frac{ q(z) - q(y')}{|q(z)-q(y')|}.$

   To summarize,  it follows that for $\epsilon_0 >0$ sufficiently small, there exist constants $C_j >0; j=1,2,3,4$ uniform in $\epsilon_0$ such that the cutoff $\chi_{\Theta}$ in (\ref{q1.1}) satisfies
 
 \begin{eqnarray} \label{SUPPORT}
\text{supp}  \,\, \chi_{\Theta} 
 \subset \{ (z,y'); \,\,\max \big( \, \langle \rho(z,y'), d_{y'} q(y') \rangle, \langle \rho(z,y'), d_{z} q(z) \rangle \, \big) \leq \frac{1}{C_1} < 1,  \nonumber \\
|q(z)-c_j| \leq C_2 \epsilon_0, \,\,  |q(z)-q(y')| \geq C_3 > 0, \,\, \text{dist}(q(y'), {\mathcal C}) \geq C_4 >0 \}, \end{eqnarray} 
 where in (\ref{SUPPORT}) we recall that $(q(z), q(y') \in \Gamma_k \times \mathring{\partial \Omega}. $  \\

The next step is to apply stationary phase in  (\ref{Q1.1}) in $(z,\xi')$ taking into account the support properties of $\chi_{\Theta}$ (and consequently $c_{reg}$) in (\ref{SUPPORT}).  Given the phase function
 $$\phi(z,\xi';y,y'):=  (y-z) \xi' + |q(z)-q(y')|,$$
   the critical point equations are

$$ d_{z} \phi = -\xi' + \langle \rho(z,y'), d_z q(z) \rangle = 0 \iff  \xi' = \langle \rho(z,y'), d_z q(z) \rangle,$$
$$ d_{\xi'} \phi = y-z = 0 \iff z = y.$$\

 The only slight subtlety here is the presence of the corner cutoff $ 1- \psi_k^{2 \delta} \in S^{0}_{2\delta}(1)$ which is supported outside an $h^{2\delta}$-neighbourhood of the corner $c_k.$ This cutoff is $2$-microlocal since $2 \delta = 1-0 > 1/2.$  However, this term only depends on the $z$-variables and so, in particular,
 
\begin{equation} \label{singsymbol}
(h D_{z} D_{\xi'})^{\alpha}   \Big(  (1-\psi^{2\delta})(z;h)   \, c_{reg}(z,y',h)\Big) = O(h^{(1-2\delta) |\alpha|}). \end{equation}\

Since $2 \delta <1,$ one can legitimately apply stationary phase in (\ref{Q1.1}); indeed,   setting $\tilde{c}(z,\xi';y,y',h): = \chi(y,\xi') \, \chi_{\Theta}(z,y') \,  c_{reg}(z,y';h) \, (1-\psi_k^{2\delta})(z,h) \, \psi^{2\delta}(y',h),$  the remainder term of order $N$ is
$$ R_N(y,y',h) \leq h^N \int_{0}^1 \frac{(1-t)^N}{N\!} \| \widehat{ (D_z D_{\xi'})^N \tilde{c} } \|_{L^1} \, dt \leq  C_N h^N  \| \widehat{ (D_z D_{\xi'})^N \tilde{c} } \|_{L^1} $$
Thus, since $\tilde{c} \in C^{\infty}_{0},$ it follows that
\begin{equation} \label{rembound}
|R_N(y,y',h)| \leq C_N h^{N} \max_{|\alpha| \leq 2} \| D_{z,\xi'}^{\alpha} \, (D_z D_{\xi'})^N \tilde{c} \|_{L^\infty} = O_N(h^{-2\delta N - 4\delta}) \end{equation}
The last estimate in (\ref{rembound}) follows from Leibniz rule,
(\ref{singsymbol}) and the fact that at most $(N+2)$  derivatives in
$z$ hit the singular symbol $(1-\psi_{2\delta})(z,h).$ Thus, if
follows from (\ref{rembound}) that by choosing $N \gg 1$ sufficiently
large, one can apply stationary phase to the $O(h^{\infty})$-error in (\ref{Q1.1}). The result is that  the Schwartz kernel of $Q_1^{(2)}(h)$  can be written in the form:

\begin{eqnarray}\label{q1.2}
Q_1^{(2)}(y,y',h) = (2\pi h)^{-1/2} e^{i |q(y)-q(y')|h}  \, d_{sing}(y;h) \, d_{reg}(y, y',h) \, (1-\psi)(y')  \nonumber \\
 + O(h^{\infty})   \end{eqnarray}
where, $(q(y), q(y')) \in \mathring{\Gamma_k} \times \mathring {\partial \Omega}, \,\,\, d_{reg} \in S^0(1).$  The symbol $d_{sing}$ is 2-microlocal with $ \partial_{y,y'}^{\alpha} d_{sing} (y,h) = O(h^{-2 \delta |\alpha|}).$

Moreover, again  in view of (\ref{SUPPORT}), the ray $\rho(y,y') = \frac{q(y)-q(y')}{|q(y)-q(y')|}$ is then transversal to the boundary at both endpoints $q(y) \in \mathring{\Gamma_k}$ and $q(y') \in \mathring{\partial \Omega}.$  Thus, there exists $C_0 >1$ with
\begin{eqnarray} \label{q1.21}
\text{supp} \, d_{reg} \subset \Big\{ (y,y'); \, (q(y),q(y')) \in \mathring{\Gamma_k} \times \mathring{\partial \Omega}, \hspace{1in} \nonumber \\
\max \big( \,  | \langle d_{y'}q(y'), \rho(y,y') \rangle |, \,  | \langle d_{y}q(y), \rho(y,y') \rangle | \, \big) \leq \frac{1}{C_0}, \, |q(y)-q(y')| \geq C_1  \Big\}. \end{eqnarray}\

Setting $S(y,y'):= |q(y)-q(y')|,$  it follows by  direct computation that

\begin{eqnarray} \label{mixedhessian}
 \partial_{y} \partial_{y'} S(y,y') = \frac{1}{|q(y)-q(y')|}  \, \big[ \langle d_{y}q(y),  \,\, d_{y'}q(y') - \langle d_{y'}q(y'), \rho(y,y') \rangle \, \rho(y,y') \rangle \big] \nonumber \\
= \frac{1}{|q(y)-q(y')|}  \, \langle d_{y}q(y), \rho^{\perp}(y,y') \rangle   \cdot \langle d_{y'}q(y'), \rho^{\perp}(y,y') \rangle, \end{eqnarray}
where $\rho^{\perp}$ is unit vector orthogonal to $\rho.$
Thus, from  (\ref{q1.21}) and using the fact that $|\rho(y,y')|=1$ and $\Omega$ is convex, 

\begin{equation} \label{q1.3}
| \partial_{y} \partial_{y'} S(y,y')| \geq C_3 >0, \quad (y,y') \in \text{supp} \,  d_{reg}. \end{equation}\

 Thus, $Q_1^{(2)}(h)$ has a canonical relation that is a graph and so, by $h$-Egorov,  $P_1(h):= Q_{1}^{(2)}(h)^* Q_1^{(2)}(h): C^{\infty}(\mathring{\partial \Omega}) \to C^{\infty}(\mathring{\partial \Omega})$ is an $h$-psdo. Indeed, 
 from (\ref{q1.2}) and (\ref{q1.3}),  by a standard Kuranishi change
 of variables, it follows that the Schwartz kernel of the $h$-psdo $P_1(h):= Q_1^{(2)}(h)^* Q_1^{(2)}(h)$  is  modulo $O(h^\infty)$ of the form

\begin{eqnarray} \label{p1.1}
P_1(y',y'',h) = (2\pi h)^{-1} \int_{\R} e^{i(y'-y'') \eta/h} \, p_1(y',\eta; h) \,\, (1-\psi(y'))^2 \, \, f(z(y',\eta),h) \, d\eta, \end{eqnarray}\

In (\ref{p1.1}),  $p_1 \in S^{0}(\mathring{\partial \Omega})$ with supp $\, p_1 \subset \{(y',\eta) \in B^* \mathring{\partial \Omega}; |\eta|_{y'} \leq 1/C_1 <1, \,\, |q(y')- {\mathcal C}| \geq C_4 >0 \}$ due the transversal support properties of $d$ in (\ref{q1.2}). Also, we note that again by taking $\epsilon_0 >0$ small, the near glancing rays in the support of $\chi_j$ reflect off $\Gamma_k$ and intersect the interior of a {\em single} edge, say $\Gamma_{\ell}.$ Thus, in (\ref{p1.1}) we can assume that $ (q(y), q(y')) \in \mathring{\Gamma_{\ell}}$ for some fixed $\ell \in \{1,...,N \}.$

Since
\begin{equation} \label{matrixelements}
\| Q_1^{(2)}(h) u_h \|_{\partial \Omega}^{2}  = \langle P_1(h) u_h, u_{h} \rangle_{\Gamma_{\ell}} + O(h^{\infty}), \end{equation}
one is reduced to estimating the $h$-psdo matrix elements on the RHS of (\ref{matrixelements}).

The second part of the symbol, $ f(z(y',\eta),h),$ in (\ref{p1.1}) is somewhat more subtle since it is in a suitable semiclassical 2-microlocal class. Here, $z(y,\eta') \in \Gamma_k$ where
$$z(y,\eta') = \pi \beta(y,\eta'), \quad (y,\eta') \in B^*_0(\mathring{\Gamma_{\ell}}).$$ 
To describe this symbol in more detail, consider the curve $$H_{\ell} := \{ (y',\eta) \in B^*\Gamma_{\ell};   z(y',\eta) = 0, \,\, (y',\eta) \in \, \text{supp} \, p_1  \},$$ consisting of covectors in $B^*\Gamma_{\ell}$ which result from near-glancing rays to $\Gamma_j$ reflecting near the corner $c_k$ in the adjacent edge $\Gamma_k$ and  then hitting the interior of $\Gamma_{\ell}.$ The fact that $H_{\ell}$ is $C^{\infty}$ follows by the implicit function theorem since $\partial_{\eta} z(y',\eta) \neq 0$ for $(y',\eta) \in H_{\ell}$ from (\ref{q1.3}).

 Let $\Psi_{H_{\ell},2\delta}^{0}$ denotes the space of zeroth-order 2-microlocal h-psdos associated with the hypersurface (i.e. curve) $H_{\ell}  \subset B^*\Gamma_{\ell}'$ as in (\cite{CHT} section 2).
In the formula in (\ref{p1.1}), one readily verifies  that  $f \in S^{0}_{H_{\ell},2\delta} (\Gamma_{\ell}),$ so that

$$P_1(h) \in \Psi_{H_{\ell},2\delta}^0 (\Gamma_{\ell}).$$

Moreover, it is readily checked that
$$ f(z(y',\eta),h) = (1-\psi_{k}^{2\delta})^2 (z(y',\eta),h) + O_{S}(h^{1-2\delta}).$$

To summarize, setting 

\begin{equation} \label{product}
 f_{sing}(y',\eta,h):= p_1(y',\eta;h) \,  (1-\psi(y'))^2 \, \,f( z(y',\eta),h) \in S^{0}_{H_{\ell},2\delta}(\Gamma_{\ell}), \end{equation} 
we have shown  that

\begin{equation} \label{p1 2micro}
P_1(h) = Op_{h}(f_{sing}) + O(h^{\infty})_{L^2 \to L^2}. \end{equation}\

 Moreover, recall that the symbol $p_1(y',\eta)$ has transversal   support {\em away} from corners with  supp $\, p_1 \subset \{(y',\eta) \in B^* \mathring{\partial \Omega}; |\eta|_{y'} \leq 1/C_3(\epsilon_0) <1, \,\, |q(y')- {\mathcal C}| \geq C \epsilon_0 \}.$ So, from (\ref{product}),

$$ \text{supp}  \, f_{sing} \subset \{(y',\eta) \in B^* \mathring{\partial \Omega}; |\eta|_{y'} \leq 1/C_3(\epsilon_0) <1, \,\, |q(y')- {\mathcal C}| \geq C \epsilon_0 \}.$$\

 Then, we introduce an additional cutoff $\chi_{reg} \in C^{\infty}_{0}(B^*\Gamma_{\ell}), \,\, 0 \leq \chi_{reg} \leq 1,$ satisfying

 \begin{description}
   \item[(i)]
 \begin{align} \label{regcutoff}
  \text{supp}  \, \chi_{reg} \subset \{(y',\eta) \in B^* \mathring{\partial \Omega}; |\eta|_{y'} \leq 1/\tilde{C_3}(\epsilon_0) <1, \,\, |q(y')- {\mathcal C}| \geq  2 C \epsilon_0 \}; \, \tilde{C_{3}} < C_{3},\end{align}
and
   \item[(ii)] \begin{eqnarray}  \chi_{reg} (y,\eta') = 1, \, \, \, \, (y,\eta') \in \text{supp} \,f_{sing}.\end{eqnarray} 
 \end{description}
 
 Thus, from (ii), $\chi_{reg}  \, f_{sing} = f_{sing} $ with $\chi_{reg} \in S^{0}(1)$ in a standard symbol class satisfying the transversal support conditions in (\ref{regcutoff}).   Then, by $h$-psdo calculus (see \cite{CHT} subsection 2.2.2), since $P_1(h) = Op_h(f_{sing}),$ we have
 
$$ \langle P_1(h) u_h, u_h \rangle_{\Gamma_{\ell}} = \langle P_{1}(h) \chi_{reg}(h)  \tilde{u_h}, \chi_{reg}(h) \tilde{u_h}\rangle_{\Gamma_{\ell}'} + O(h^{\infty})$$
$$ = O(1) \| \chi_{reg}(h) u_h \|_{L^2}^2,$$\\
by $L^2$-boundedness of $P_1(h) = Op_h(b_{sing}).$ Finally, since the symbol $\chi_{reg}$ is supported transversally to the boundary edge $\Gamma_{\ell}$ and outside an $h$-independent neighbourhood of the  corners, by the Rellich result in  Lemma \ref{rellich},
 $$ \| \chi_{reg}(h) u_h \|_{\partial \Omega} = O(1).$$

Consequently,
 
 \begin{equation} \label{matrixbound}
  \langle P_1(h) u_h, u_h \rangle_{L^2(\Gamma_{\ell})} = O(1),
  \end{equation}   
 and  in view of (\ref{matrixelements}),  

 \begin{equation} \label{q2.1upshot}
  \| Q_1^{(2)}(h) u_h \|_{\partial \Omega} = O(1). \end{equation}\

\noindent  {\bf $Q_1^{(1)}$-term.} As for the near-diagonal
$Q_1^{(1)}(h)$-term, since $|q(y)-q(y') |\lessapprox h^{2\delta}$ for
$(y,y') \in \text{supp } Q_{1}^{(1)}(\cdot,\cdot),$ one cannot simply
use the asymptotic formula for the kernel in (\ref{inteqn2}).
Instead, we use the exact Hankel function formula (\ref{double}) together with a Schur lemma argument to control this term.   By $L^2$-boundedness, 

\begin{equation} \label{reduce}
 \| Q_1^{(1)}(h) u_h \|_{L^2} = \| \zeta_k(hD) N_1^{(1)}(h) u_h \|_{L^2} = O(1) \|N_1^{(1)}(h) u_h\|_{L^2} \end{equation}
and so, it suffices to bound $\|N_1^{(1)}(h) u_h\|_{L^2}$ from above.

From the explicit formulas (\ref{double}) and (\ref{hankel}) it follows that for $(q,q') \in \mathring{\Gamma_k} \times \mathring{\Gamma_k},$ 

\begin{equation} \label{lucky}
h^{-1} |\langle \nu_q, \rho(q,q')\rangle | = O(|q-q'| h^{-1}). \end{equation}\

Note that in (\ref{lucky}) {\em both} $q$ and $q'$ are constrained to the {\em same} boundary edge $\Gamma_k$, so there is no jump in the boundary normal resulting in the improved estimate in (\ref{lucky}). From the explicit formulas (\ref{hankel}), (\ref{double}) and  (\ref{lucky}), by  setting $z = |q-q'|/h,$ we have (see (\ref{n1}) for definition of $N_1^{(1)}$),
$$|N_1^{(1)}(q,q';h)| \leq C z^{1/2}\, \Big| \, \int_{0}^{\infty} e^{-s} s^{1/2} ( 1- \frac{s}{2iz} )^{1/2} ds \, \Big| \, \cdot \, \chi_M(h^{-2\delta}(q-q')) $$
$$ \leq C  \, \Big(  \int_{0}^{\infty} e^{-s} s^{1/2} ( s + 2 z )^{1/2} \, ds \Big) \,  \chi_M(h^{-2\delta}(q-q')) $$
\begin{equation} \label{schur}
 \leq C' \big( 1 + h^{-1/2} |q-q'|^{1/2} \big) \chi_M(h^{-2\delta}(q-q')) \lessapprox h^{-0} \chi_M(h^{-2\delta}(q-q')).\end{equation}\

Here, the last line follows since $2\delta = 1-0.$ From (\ref{schur}) we get that by the Schur lemma,

\begin{align} 
\| N_{1}^{(1)}(h) \|_{L^2 \to L^2} & \leq \max  \Big( \int_{|q-q'| \lessapprox h^{1-0}} |N_1^{(1)}(q,q',h)| dq,  \int_{|q-q'| \lessapprox h^{1-0}} |N_1^{(1)}(q,q',h)| dq' \Big)\notag \\
 = O(h^{1-0}). \label{q2.2}\end{align} 

Since by Sobolev restriction,  $\| u_h \|_{\partial \Omega} = O(h^{-1/2}),$ it follows from (\ref{q2.2})  and (\ref{reduce}) that

\begin{equation} \label{q2.3}
\| Q_1^{(1)}(h) u_h \|_{\partial \Omega} = O(h^{1/2-0}).\end{equation}\

Thus, from (\ref{q2.1upshot}) and  (\ref{q2.3}), 

\begin{equation} \label{q2upshot}
\|Q_1(h) u_h \|_{L^2(\partial \Omega)} = O(1).\end{equation}\

\subsubsection{ Estimating  $ \| Q_2(h) u_h \|$ } \label{fardiagonal} 
We begin by recalling from (\ref{no kernel}) that the Schwartz kernel  
$N_2^{(1)}(q,q',h) = 0$ and so, there is  no  $Q_2^{(1)}(h)$ term.

\noindent{\bf $Q_2^{(2)}$-term:}    We claim this term is $O(h^{\infty})$ and is therefore residual.
The argument is a rather standard wave front computation using the fact that under the admissibility assumption and  for $\epsilon_0>0$ small, all reflected rays leaving $\Gamma_k$ hit a boundary edge $\Gamma_{\ell}$ in the {\em interior} and far from corners.

Let $\tilde{\psi}_{\epsilon_0} = \sum_{k} \tilde{\psi}_{k,\epsilon_0},$ where the $\tilde{\psi}_{k,\epsilon_0}$ is a corner cutoff (independent of $h$)  supported in an $\epsilon_0$-neighbourhood of $c_k.$
Since $|q-q'| \gtrapprox h^{2\delta},$ one can replace $N_2^{(2)}(h)$ with the $h$-FIO  piece, $N_{\beta}(h).$  The result is that

\begin{eqnarray}\label{wf}
 Q_2^{(2)}(h) =  \zeta_k(hD) \, (1-\psi_k^{2\delta}(h)) {\bf 1}_{\Gamma_k} N_{\beta}(h) \tilde{\psi}^{2\delta}(h) + O_{C^\infty}(h^{\infty}) \nonumber \\
 =  \zeta_k(hD) \, (1-\psi_k^{2\delta}(h)) {\bf 1}_{\Gamma_k} N_{\beta}(h) \tilde{\psi}^{\epsilon_0} \tilde{\psi}^{2\delta}(h)  + O_{C^\infty}(h^{\infty}), \end{eqnarray}\
 
\noindent where in the last line we have used that since \ $\tilde{\psi}_{\epsilon_0} \Supset \tilde{\psi}^{2\delta}(h),$ clearly  $(1-\tilde{\psi}_{\epsilon_0}) \tilde{\psi}^{2\delta}(h) \equiv 0.$ 

By h-psdo calculus, it then follows that with $\tilde{\zeta}_k \Supset \zeta_k,$

$$ \zeta_k(hD) (1-\psi_k^{2\delta})(h) )  {\bf 1}_{\Gamma_k} N_{\beta}(h) \tilde{\psi}_{\epsilon_0} \tilde{\psi}^{2\delta}(h) \hspace{1.3in}$$
$$= \zeta_k(hD) (1-\psi_k^{2\delta})(h) ) \tilde{\zeta}_k(hD) {\bf 1}_{\Gamma_k} N_{\beta}(h) \tilde{\psi}_{\epsilon_0} \tilde{\psi}^{2\delta}(h) + O(h^{\infty}).$$\

Then, by $L^2$-boundedness,
\begin{eqnarray} \label{wf2}
\| \zeta_k(hD) (1-\psi_k^{2\delta})(h) ) \tilde{\zeta}_k(hD) {\bf 1}_{\Gamma_k} N_{\beta}(h) \tilde{\psi}^{\epsilon_0} \tilde{\psi}^{2\delta}(h) u_h \|_{\Gamma_k} \nonumber \\
 = O(1)  \, \| \tilde{\zeta}_k(hD) {\bf 1}_{\Gamma_k} N_{\beta}(h) \tilde{\psi}_{\epsilon_0} \|_{L^2 \to L^2} \,\, \| \tilde{\psi}^{2\delta}(h) u_h \|_{\Gamma_k}. \end{eqnarray}\

Then, by $h$-wavefront calculus, 
\begin{eqnarray} \label{wf3}
WF_h' \Big(  \tilde{\zeta}_k(hD) {\bf 1}_{\Gamma_k} N_{\beta}(h) \tilde{\psi}_{\epsilon_0} \Big) \subset \Big\{ (y,'\xi';y,\xi) \in B^*\mathring{\partial \Omega} \times B^*\Gamma_k, \nonumber \\
\, |q(y)-c_k| \leq \epsilon_0, \, \sum_{\ell} |q(y') - c_{\ell}| \leq \epsilon_0,
 \, (y',\xi') = \beta (y,\xi), \, |\xi - \cos(\pi-\alpha_k)| \leq C' \epsilon_0 \Big\}. \hspace{0in} \end{eqnarray}

By continuity of the billiard map $\beta: B^*\partial \Omega \to  B^*\partial \Omega,$ under the admissibility assumption in Defintion \ref{admissible}, it follows that for $\epsilon_0>0$ sufficiently small in (\ref{wf3}), when $|q(y)-c_k| < \epsilon_0,$ one has that $\min_{\ell =1,..,M} |q(y')-c_{\ell}| > C_0>0$ for {\em all} corner indices $\ell$ in (\ref{wf3})  and where $C_0>0$ can be chosen {\em independent} of $\epsilon_0 >0.$ Consequently, 
\begin{equation} \label{wf4}
WF_h' \Big(  \tilde{\zeta}_k(y,hD) {\bf 1}_{\Gamma_k} N_{\beta}(h) \tilde{\psi}_{\epsilon_0} \Big) = \emptyset. \end{equation}

Thus, from (\ref{wf})-(\ref{wf4}) it follows that
\begin{equation} \label{q1upshot}
\| Q_2(h) u_h \|_{\partial \Omega} = O(h^{\infty}).\end{equation}

To summarize, in view of (\ref{q1upshot}), (\ref{q2upshot}) and (\ref{CRUCIAL}),  we have proved that\\

\begin{eqnarray} \label{CRUCIAL2}
\| \chi_j(h D)  (1 - \psi_{j}^{\delta}(h)) u_h  \|_{L^2(\Gamma_j)}  \lessapprox \sum_{k=j-1}^{j+1} \| N_{jk}^{\mathcal G}(h) \|_{L^2 \to L^2} \, \Big( 1 + O( \| \psi_k^{2\delta}(h) u_k \| ) \Big)\nonumber \\
+ \| N_j^{\mathcal D}(h) u_h \|_{L^2(\Gamma_j)} + O(h^\infty).\hspace{1in}
 \end{eqnarray}\

From the non-concentration result in Theorem \ref{T:non-con}, for any corner $c_k \in {\mathcal C},$

$$ \| \phi_h \|_{B(c_k, h^{2\delta})}  = O(h^{\delta})$$ 
and by an application of Sobolev restriction, it follows that  by setting $\delta = 1/2-0,$

$$  \| \psi_k^{2\delta}(h) u_k \|_{L^2(\Gamma_k)} = O(h^{\delta - 1/2}) = O(h^{-0}).$$

Then, from (\ref{CRUCIAL2}), 

\begin{eqnarray} \label{CRUCIAL3}
\| \chi_j(h D)  (1-\psi_{j}^{\delta}(h)) u_h  \|_{L^2(\Gamma_j)}    \hspace{2in}\nonumber \\ \lessapprox
h^{-0} \sum_{k=j-1}^{j+1} \| N_{jk}^{\mathcal G}(h) \|_{L^2 \to L^2} +   \| N_j^{\mathcal D}(h) u_h \|_{L^2(\Gamma_j)} + O(h^\infty).\end{eqnarray}\

Recall, that given  the transfer operator $N_{jk}(h)$ in (\ref{TRANSFERcut}), one can write  (see (\ref{TRANSFERrelate})),

$$N_{jk}^{\mathcal G}(h) = N_{jk}(h) \chi_{k}^{tr}(q',hD)  (1-\psi_{k}^{2\delta})(h), \,\,\,N_{jk}^{\mathcal D}(h) = N_{jk}(h)  \psi_k^{2\delta}(h).$$\

Since $N_{j}^{\mathcal D}(h)  = \sum_{k = j-1,j+1} N_{jk}^{\mathcal
  D}(h),$ in view of (\ref{CRUCIAL3}), one is reduced to bounding $\| N_{jk}(h) \|_{L^2 \to L^2}$ for the transfer operator $N_{jk}(h): C^{\infty}(\mathring{\Gamma_j}) \to C^{\infty}(\mathring{\Gamma_k})$ in (\ref{TRANSFER}).

\subsubsection{Estimating $\| N_{jk}(h) \|_{L^2 \to L^2}$ }\label{standard}

Before deriving upper bounds for  $\| N_{jk}(h) \|_{L^2 \to L^2},$ we review some background on $h$-Fourier integral operators with fold-type canonical relations.

\subsection{h-Fourier integral operators associated with one-sided folds} \label{transfer}

 We consider here the transfer operators $N_{jk}(h) : C^{0}(\Gamma_j) \to C^{0}(\Gamma_k); \,\, k=j-1,j+1$ with Schwartz kernel given in (\ref{TRANSFER}).

We briefly pause here to motivate the $O(h^{-1/4-0})$-bound in Theorem \ref{dirichlet} by making explicit the connection to the standard $L^2 \to L^2$ bounds for $h$-Fourier integral operators with canonical relations that are one-sided folds. The novelty here lies  in the extension of the estimates  up to corners. 

To begin, set  $S(s,t) := |q(s) - q(t)|$  and consider the singular set

\begin{equation} \label{sing}
 \Sigma:= \{ (q(s), q(t)) \in \Gamma_j \times \Gamma_k; \,\, s \in \text{supp} \, (1 - \psi_j^{\delta});  \,\,\, \partial_s \partial_t S(s,t) = 0 \}. \end{equation}
The set $\Sigma$ is the singular locus of the Lagrangian parametrization

\begin{equation} \label{parametrization}
 \iota(s,t) = (q(s), \partial_s S(s,t), q(t), \partial_t S(s,t)) \in \Lambda_{\beta} \cap \pi^{-1} (\text{supp} \psi_j^{\delta} ), \end{equation}
 in the sense that, by an application of the inverse function theorem,
 for $(s,t) \in \Sigma^c$, $\iota |_{\Sigma^c}$ is a canonical
 graph. Then,  for arbitrarily small (but fixed) $\epsilon >0$  we let
 $\chi_{\Sigma} \in C^{\infty}_0 (\Gamma_j \times \Gamma_k)$ be
 supported in an $2 \epsilon$-width tubular neighbourhood of $\Sigma$
 with $\chi_{\Sigma}  \equiv 1$ in an $\epsilon$-width tubular
 neighbourhood. Let $N_{jk}^{\Sigma}(h)$ (resp.  $N^{1 - \Sigma}_{jk}(h)$) be the operators with Schwartz kernels  $ \chi_{\Sigma} N_{jk}(h)$  (resp. $(1-\chi_{\Sigma}) N_{jk}(h).)$ Then,
by the $h$-Egorov theorem,
 $$  N_{jk}^{1-\Sigma} (h)^*  N_{jk}^{1-\Sigma}(h) \in Op_h( S^0_{\delta}(\mathring{\Gamma_k}) ).$$ 
 
 Thus, by $L^2$-boundedness,
 \begin{equation} \label{fiograph}
 \| N_{jk}^{1-\Sigma}(h) \|_{L^2 \to L^2} = O(1). \end{equation}
 
   A stationary phase argument as in subsection \ref{statphase} shows that  the transfer operator  $N_{jk}(h) : C^{\infty}(\Gamma_k) \to C^{\infty}(\Gamma_j)$ has a Schwartz kernel of the form 
  
 \begin{equation} \label{fiowkb}
 N_{jk}^{\Sigma}(h)(s,t) = (2\pi h)^{-1/2}  e^{i S(s,t)/h}  \, (1-\psi_j^{\delta})(s;h) \,c_{\chi_j}(s,t,h) \, \chi_{\Sigma}(s,t), \,\,\, (q(s),q(t)) \in \Gamma_j \times \Gamma_k. \end{equation}

Moreover, in (\ref{fiowkb}), the symbol $c_{\chi_j}(s,t,h)$ has the following properties:
\begin{description}
  \item[(i)]
\begin{align*}
 |q(s) - q(t) |^{1/2}  \cdot c_{\chi_j}(s,t,h) \in
 S^{0}_{\delta}(1) \end{align*}
and
\item[(ii)]
  \begin{align}  
    \text{supp} & \, c_{\chi_j}\subset \{(s,t); |q(t)-c_j| \leq
    C\epsilon_0 |q(s)-c_j|, \notag 
    \\
    & \langle d_t q(t), \rho(s,t) \rangle = \cos (\pi - \alpha_k) +
    O(\epsilon_0) < 1 \rangle \}, \label{fiowkb2}\end{align}
  \end{description}
where we continue to write $\rho(s,t)=  \frac{q(t)-q(s)}{|q(t)-q(s)|}.$ 
In view of (\ref{fiograph}), we have that
$$ \| N_{jk}(h) \|_{L^2 \to L^2} = \| N_{jk}^{\Sigma}(h) \|_{L^2  \to L^2} + O(1)$$
and one is consequently reduced to estimating  $\|  N_{jk}^{\Sigma}(h) \|_{L^2 \to L^2},$ with Schwartz kernel $N_{jk}^{\Sigma}(h)(s,t)$  in (\ref{fiowkb}).

\begin{lemma} \label{glancing}  Let $c_{j+1} = \Gamma_j \cap
  \Gamma_{j+1}$  be the corner adjacent to the  boundary edges $\Gamma_j$ and $\Gamma_{j+1}.$  Then, by choosing the glancing cutoff aperature $\epsilon_0>0$ sufficiently small, it follows that there exist constants  $C_1(\epsilon_0)>0$  and $C_2(\epsilon_0)>0$ such that for any $h \in (0,h_0(\epsilon_0)]$ sufficiently small and $(s,t)  \in \text{supp} \, \,( c_{\chi_j} \cdot \chi_{\Sigma}),$

$$C_1(\epsilon_0) \frac{ |q(t)-c_{j}| \, |q(s)-c_j|}{ |q(s)-q(t)|^{3}} \leq  | \partial_s \partial_t S(s,t) |  \leq C_2(\epsilon_0) \frac{ |q(t)-c_{j}| \, |q(s)-c_j|}{ |q(s)-q(t)|^{3}}, $$
\end{lemma}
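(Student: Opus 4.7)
The plan is a direct computation starting from the explicit mixed-Hessian formula \eqref{mixedhessian} after choosing adapted coordinates at the common corner $c_{j+1}$. Place Cartesian coordinates in $\R^2$ with origin at $c_{j+1}$, and let $\mathbf{u},\mathbf{v}$ be the unit tangent vectors to $\Gamma_j,\Gamma_{j+1}$ at $c_{j+1}$ pointing into the respective edges, so that $\langle \mathbf{u},\mathbf{v}\rangle = \cos\alpha_{j+1}$. Since $\Gamma_j$ is flat, we have the \emph{exact} identities $q(s) = s\mathbf{u}$ and $d_sq(s) = \mathbf{u}$, while the arclength parametrization of the $C^\infty$ edge $\Gamma_{j+1}$ gives $q(t) = t\mathbf{v} + \delta(t)$ with $|\delta(t)| = O(t^2)$ and $d_tq(t) = \mathbf{v} + O(t)$, the implicit constants depending only on the curvature of $\Gamma_{j+1}$. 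In particular $s = |q(s)-c_{j+1}|$ and $t = |q(t)-c_{j+1}|$, so the asserted inequality (with $c_j$ read as the common corner $c_{j+1}$) becomes $|\partial_s\partial_tS(s,t)| \approx st/|q(s)-q(t)|^3$.

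Writing $\rho = \rho(s,t) = (q(s)-q(t))/|q(s)-q(t)|$ and projecting $q(s)-q(t)$ onto the orthogonal frames $(\mathbf{u},\mathbf{u}^\perp)$ and $(\mathbf{v},\mathbf{v}^\perp)$, one computes
$$\mathbf{u}^\perp\cdot(q(s)-q(t)) = -\langle \mathbf{u}^\perp,\mathbf{v}\rangle t + O(t^2) = \mp t\sin\alpha_{j+1} + O(t^2),$$
$$\mathbf{v}^\perp\cdot(q(s)-q(t)) = \langle \mathbf{v}^\perp,\mathbf{u}\rangle s + O(t^2) = \pm s\sin\alpha_{j+1} + O(t^2),$$
which upon dividing by $|q(s)-q(t)|$ yields
$$|\langle \mathbf{u},\rho^\perp\rangle| = \frac{t\sin\alpha_{j+1}}{|q(s)-q(t)|}\bigl(1+O(t)\bigr),\qquad |\langle \mathbf{v},\rho^\perp\rangle| = \frac{s\sin\alpha_{j+1}}{|q(s)-q(t)|}\Bigl(1+O(t^2/s)\Bigr).$$
Writing $d_tq(t) = \mathbf{v} + O(t)$ and substituting into \eqref{mixedhessian} gives
$$|\partial_s\partial_t S(s,t)| = \frac{st\sin^2\alpha_{j+1}}{|q(s)-q(t)|^3}\Bigl(1 + O(\epsilon_0/\sin\alpha_{j+1})\Bigr).$$

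To conclude, one invokes the localization built into $c_{\chi_j}\cdot\chi_\Sigma$: the support property \eqref{fiowkb2} of $c_{\chi_j}$ forces $t \leq C\epsilon_0 s$, while $s,t$ are globally bounded by $\mathrm{diam}(\partial\Omega)$, and $\chi_\Sigma$ further confines $(s,t)$ to a neighbourhood of $\{t=0\}$; these two facts absorb the $O(t)$ and $O(t^2/s)$ remainders into a uniform $O(\epsilon_0)$ correction. Since $\alpha_{j+1}\in(0,\pi)$ the factor $\sin^2\alpha_{j+1}$ is strictly positive, so taking $\epsilon_0 >0$ small enough (depending only on $\sin\alpha_{j+1}$ and the $C^2$-norm of $\Gamma_{j+1}$) yields the claimed double-sided bound with, for instance, $C_1(\epsilon_0) = \tfrac12\sin^2\alpha_{j+1}$ and $C_2(\epsilon_0) = 2\sin^2\alpha_{j+1}$. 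The only mildly delicate point is tracking the curvature correction $\delta(t) = O(t^2)$ from $\Gamma_{j+1}$ uniformly up to the corner, but because both cutoffs cooperate to make $t$ small (absolutely and relative to $s$), no obstacle arises.
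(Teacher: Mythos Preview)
Your proof is correct and follows essentially the same route as the paper: a direct computation in coordinates adapted to the common corner, exploiting that $\Gamma_j$ is exactly flat and that the support condition \eqref{fiowkb2} forces $t\lesssim\epsilon_0 s$. The only cosmetic difference is that you invoke the geometric mixed-Hessian identity \eqref{mixedhessian} (so the computation reduces to evaluating the two scalar quantities $\langle d_sq,\rho^\perp\rangle$ and $\langle d_tq,\rho^\perp\rangle$), whereas the paper writes $\Gamma_{j+1}$ as a graph $\{(t,f(t))\}$ over the line containing $\Gamma_j$ and differentiates $S(s,t)=[(s-t)^2+f(t)^2]^{1/2}$ by hand; the two computations are equivalent and yield the same leading term with the same $O(\epsilon_0)$ remainder.
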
\

\begin{proof}  By an affine change of Euclidean coordinates, it suffices to assume that $c_{j} = (1,0)$ and
$$\Gamma_j = \{(s,0); \,\, s \in \text{ supp} \, (1-\psi_j^{\delta}) \} \subset \{ (s,0);   \,\,  0 \leq s \leq 1- C_1 h^{\delta}  \}.$$

 Then,
 
$$\Gamma_{j+1} = \{ (t, f(t));  t \in \text{supp} \, \psi_k \, \} = \{ (t, f(t)); 1 \leq t \leq 1 + \tilde{C_1} \epsilon_0 \},$$\
where with some $\alpha >0,$ the profile function
$$ f(t) = \alpha (t-1) + O( |t-1|^2), \quad 1 \leq t \leq 1 + \tilde{C_1} \epsilon_0.$$

In this case, the phase function is 
$$ S(s,t) = [  (s-t)^2 + f^2(t)   ]^{1/2}.$$

\begin{equation} \label{direct}
\partial_s \partial_t S(s,t) =     \frac{ 2 \alpha^2 (t-1) \, [ 1-s + O(1-t) ] }{ [  (s-t)^2 + \alpha^2 (1-t)^2 ]^{3/2} }. \end{equation}

Finally, by shrinking the glancing cutoff $\chi_j(hD)$ (i.e. taking $\epsilon_0 >0$ sufficiently small), one can assume that 
$$|t-1| \leq C \epsilon_0 |1-s|$$
and so,
 $$ 1-s + O(1-t) \gtrapprox 1-s$$  
 in (\ref{direct}). This completes the proof of the Lemma.

\end{proof}

It will be useful in the following to use the parametrizations of $\Gamma_j$ and $\Gamma_k$ given in the proof of Lemma \ref{glancing}.  For future reference, we note that since $|1-t| \leq C \epsilon_0 |1-s|$ for $(s,t) \in \text{supp} \, c_{\chi_j},$ it follows that by choosing $\epsilon_0 < 1$ small, in terms of the  parametrizing coordinates in Lemma \ref{glancing},
 \begin{eqnarray} \label{usefulbound1}
|q(s)-q(t)|  \approx | 1-s |, \quad (s,t) \in \text{supp}\, c_{\chi_j}, \end{eqnarray}
and so,  by  Lemma \ref{glancing} it follows that

\begin{equation} \label{usefulbound2}
|\partial_s \partial_t S(s,t)| \approx \frac{ | 1- t |}{ | 1- s |^2}, \quad (s,t) \in \text{supp} \,c_{\chi_j}.  \end{equation}\

In view of (\ref{usefulbound2}), since $|1-s| \gtrapprox h^{\delta}$ for $s \in (1-\psi^{\delta}_j),$ in the following, it will be useful to use the defining function
$$F(s,t):= t-1$$
to dyadically decompose $\text{supp} \chi_{\Sigma}$ in order to estimate $\| N_{jk}^{\Sigma}(h) \|_{L^2 \to L^2}.$

\begin{remark} From Lemma \ref{glancing} it is immediate that the singular manifold
$$ \Sigma = \{ (s,1),  s \in \text{supp} \, (1-\psi_j^{\delta} )  \} \cong S^*_+(\Gamma_j \cap \text{supp} \, (1- \psi_j^{\delta}))$$
which is just the (positive) glancing set along $\Gamma_j.$ 
Repeating the computation in Lemma \ref{glancing} with the corner  $c_{j}$  adjacent to sides $\Gamma_j$ and $\Gamma_{j-1},$  it follows  that the singular manifold in the latter case is $ \Sigma' = \{(s,-1); s \in \text{supp} \psi_j^{\delta} \} \cong S_-^* (\Gamma_j \cap \text{supp} \,  (1 - \psi_j^{\delta} ),$  so that the union $\Sigma \cup \Sigma'  \cong S^* ( \Gamma_j \cap \text{supp} \, \psi_j^{\delta} ),$ the entire glancing set along $\Gamma_j \cap \text{supp} \psi_j^{\delta}.$

It also follows by a direct computation using (\ref{direct}) that
$$ \partial_{s}^2 \partial_t S(s,t=1)  = 0, \quad \partial_s \partial_t^2 S(s,t=1) \neq 0, \quad s \in \text{supp} \, (1 - \psi_j^{\delta}).$$

 Thus,  the Lagrangian parametrization $\iota: \text{supp} \, (1-\psi_j^{\delta}) \, \times \, \text{supp} \, \tilde{\psi}_k  \to \Lambda_{\beta}$ in (\ref{parametrization}) has a one-sided fold singularity 
along the glancing set $\Sigma.$ The subtlety here is the presence of small-scale  cutoffs  in $h$ which will  create some additional terms resulting in $\log h$-loss in the usual bounds.

\end{remark}

To estimate $ \| N^{\Sigma}_{jk}(h)^* \cdot N^{\Sigma}_{jk}(h) \|_{L^2 \to L^2} $ we make the usual dyadic decomposition around the singular hypersurface $\Sigma$ (see \cite{Ph}), but since there are  small-scale (in $h$) non-standard symbols involved we will need to keep track of these terms in the estimates.  In view of (\ref{fiowkb}), the Schwartz kernel of $P(h):=N^{\Sigma}_{jk}(h)^* N^{\Sigma}_{jk}(h) : C^{0}(\Gamma_j) \to C^{0}(\Gamma_j)$ is of the form

\begin{eqnarray} \label{t22}
P(t,t',h):= (2\pi h)^{-1} \int_{\R} e^{i [ S(s,t) - S(s,t')]/h} \tilde{c}(s,t,t',h)  \, ds, \end{eqnarray}
where (see (\ref{TRANSFERcut})),

$$ \tilde{c}_{\chi_j}(s,t,t',h):= c_{\chi_j}(s,t,h) c_{\chi_j}(s,t',h) \chi_{\Sigma}(s,t) \chi_{\Sigma}(s,t') \, | (1- \psi_j^{\delta})(s,h) |^2,$$
and the symbols $c_{\chi_j}(s,t,h)$ and $c_{\chi_j}(s,t',h)$ satisfy the bounds in (\ref{fiowkb2})(i) with

\begin{equation} \label{ctilde}
 |q(s)-q(t)|^{1/2} \, |q(s)-q(t')|^{1/2} \tilde{c}_{\chi_j} (s,t,t',h) \in S^{0}_{\delta}(1). \end{equation}\

They also satisfy the support condition in (\ref{fiowkb2})(ii). Moreover, in view of these support conditions, by choosing $\epsilon_0>0$ small, it suffices to assume that $|t-t'| \ll 1$ in (\ref{t22}). 

We apply a standard Kuranishi argument combined with a dyadic
decomposition of the frequency variables in (\ref{t22}).  Let $
\chi^{\pm}_m \in C^{\infty}_0(\R;[0,1]), \, m=0,1,2,...$ be a sequence
of cutoffs with supp $\chi^{\pm}_m \subset [ \pm 2^{-m}, \pm
  2^{-m+1}]$ and $\sum_{m} \chi^{\pm}_m = 1.$  We make the decomposition 

$$ P(t,t',h)= (2\pi h)^{-1} \sum_{m} \int e^{i [ S(s,t) - S(s,t')]/h} \tilde{c}_{\chi_j}(s,t,t',h)  \, \chi_m(t-1) \, \chi_m(t'-1) \,ds + O(h^{\infty})$$

Setting
\begin{equation} \label{dyadicpsdo}
P_m(t,t',h):= (2\pi h)^{-1}  \int e^{i [ S(s,t) - S(s,t')]/h} \tilde{c}_{\chi_j}(s,t,t',h)  \, \chi_m(t-1) \, \chi_m(t'-1) \,ds, \end{equation}
it then follows that $ \| N_{jk}^{\Sigma}(h) \|_{L^2 \to L^2}^2 = \|P \|_{L^2 \to L^2} \leq \sum_{m} \|P_m \|_{L^2 \to L^2}$ and so, one is reduced to bounding the latter.

By Taylor expansion of the phase in (\ref{dyadicpsdo}),

 $$S(s,t) - S(s,t') = \partial_t S(s,t^*(t,t',s))  (t-t'), \quad t^*-1 \in \text{supp} \, \chi_m,$$
 we note that in view of the support of $\tilde{c}$ we have $ \max (|t-1|,|t'-1|) \leq C \epsilon_0 \ll 1$ and so, by the corresponding Kuranishi change of variable $s \mapsto \partial_t \partial_s S(s,t^*) = \xi,$

\begin{eqnarray} \label{t24}
P_m(t,t',h)= (2\pi h)^{-1}  \int_{\R} e^{i (t-t') \xi/h} \, \tilde{c}(s,t,t',h) \, \frac{ \chi_m (t-1) \, \chi_m(t'-1)}{ | \partial_s \partial_t S(s, t^* )|}  \, d\xi \nonumber \\
= (2\pi h)^{-1}  \int_{\R} e^{i (t-t') \xi/h} \, c_m(s,t,t',h)  \, d\xi, \hspace{1.2in} \nonumber \\ \nonumber \\
 c_m(s,t,t',h):= \tilde{c}(s,t,t',h) \cdot \frac{ \chi_m (t-1) \, \chi_m(t'-1)}{  \partial_s \partial_t S(s, t^* )},\hspace{1.5in} 
  \end{eqnarray}
where the last line  in (\ref{t24})  follows from (\ref{usefulbound1}) and (\ref{usefulbound2}).
In the integrand of (\ref{t24})  we abuse notation slightly and write $s = s(\xi,t,t')$ and $t^* = t^*(s(\xi ;t,t'),t,t').$

From (\ref{usefulbound1}), (\ref{usefulbound2}) and Lemma \ref{glancing},
,
$$ |c_m(s,t,t',h)| \lessapprox |q(s)-q(t)|^{-1/2} |q(s)-q(t')|^{-1/2} |q(s)-q(t^*)|^{3}$$
$$ \times \chi_m(t-1) \chi_m(t'-1) |q(t^*)-c_j|^{-1} \, |q(s)-c_j|^{-1},$$\

We note here that $q(t^*)$ lies  between $q(t)$ and $q(t')$ and so, $ t^*-1 \in \text{supp} \, \chi_m$. Then,  using (\ref{usefulbound1}) and (\ref{usefulbound2}),

\begin{equation} \label{symbolbounds1}
|c_m(s,t,t',h)| \lessapprox  |s-1|^{3} \, |s-1|^{-2} \, |t^*-1|^{-1} \, \chi_m(t^*-1) \lessapprox 2^{m} \, |s-1| \lessapprox 2^m. \end{equation}\

Similarily,  provided the dyadic scale $2^{m} \lessapprox h^{-\delta}, \,\, \delta = 1/2 -0,$ for the derivatives one gets that 

\begin{equation} \label{derivativebounds}
|\partial_{\xi}^{\alpha} \partial_{t,t'}^{\beta} c_{m}(s(\xi;t,t'),t,t',h))| \lessapprox C_{\alpha,\beta} 2^{m} h^{-\delta ( |\alpha| + |\beta|)}, \quad \text{when} \,\, 2^{m} \lessapprox h^{-\delta}. \end{equation}\

An application of $L^2$- boundedness  for the $h$-psdo $P_m$ (\cite{Zw} section 4.5.1) then gives

\begin{eqnarray} \label{CV}
\| P_m \|_{L^2 \to L^2} \lessapprox   \sum_{\alpha \leq C(n)} h^{|\alpha|/2}  \sup | \partial ^{\alpha} c_m | \nonumber \\
\lessapprox 2^{m}  \big( 1 + \sum_{1 \leq \gamma + \gamma' \leq C(n)}  h^{\gamma/2}  2^{m \gamma} \, h^{\gamma' (1/2-\delta)}  \big) \nonumber \\
\lessapprox 2^{m}  \big( 1 + \sum_{1 \leq \gamma \leq C(n)}  h^{\gamma/2}  2^{m \gamma} \big) 
\end{eqnarray} 
since $0 \leq \delta < 1/2.$

The first term on the RHS of (\ref{CV}) comes from differentiaion of
the dyadic cutoff term $\chi_m(t-1) \chi_m(t'-1)  (t^*-1)^{-1}$ whereas the second term arises from differentiaion of the symbol $ (t^*-1) \tilde{c}\in S^{0}_{\delta}(1).$  

Thus, from the last line of (\ref{CV}),

\begin{equation} \label{psdobound}
\| N_{jk}^{\Sigma}(h) \|_{L^2 \to L^2} = \| P_m(h) \|_{L^2 \to L^2}^{1/2}  \lessapprox 2^{m/2}; \quad \text{when} \, \, 2^{m} \lessapprox h^{-\delta}. \end{equation}\

On the other hand, one can bound $\| P_{m}(h) \|_{L^2 \to L^2}$ directly using (\ref{dyadicpsdo}) and (\ref{usefulbound1}). From (\ref{fiowkb2}) we have that
$$ | c_{\chi_j}(s,t) | = O( |s-1|^{-1/2}),$$
and so, by the Schur lemma,

\begin{eqnarray} \label{fiobound}
\| P_{m}(h) \|_{L^2 \to L^2} 
\lessapprox h^{-1}   \int \, \Big| \int_{ 0}^{1-h^{\delta}} c_{\chi_j}(s,t,h) c_{\chi_j}(s,t',h) \chi_m(t-1) \chi_m(t'-1) ds \, \Big| \, dt  \nonumber \\
\lessapprox h^{-1} \int_{\R} \, \Big( \, \int_{0}^{1-h^{\delta}} \frac{ds}{1-s} \, \Big) \, \chi_m(t-1) \chi_m(t'-1) dt \lessapprox h^{-1} 2^{-m} |\log h|.
\end{eqnarray}\

Using (\ref{psdobound}), (\ref{fiobound}) and taking square roots, we get that

\begin{equation} \label{transferbound2}
 \| N_{jk}^{\Sigma}(h) \|_{L^2 \to L^2} \leq \sum_{m} \min \, (  2^{m/2},  h^{-1/2} |\log h|^{1/2} 2^{-m/2} ).\end{equation}

We note that $2^{m/2} h^{1/2} |\log h|^{-1/2} = 2^{-m/2}$  is equivalent to $2^{m} = h^{-1/2} |\log h|^{1/2}$ so that $2^{m/2} =h^{-1/4} |\log h|^{1/4}.$ Thus, from (\ref{transferbound2}) it follows that

\begin{align}
\| N_{jk}(h) \|_{L^2 \to L^2} & =  \| N_{jk}^{\Sigma}(h) \|_{L^2 \to
  L^2} + O(1) \notag \\
& = O(h^{-1/4} |\log h|^{1/4})+ O(1)  = O(h^{-1/4} |\log h|^{1/4}). \label{TRANSFERBOUND} \end{align}
 
 To obtain (\ref{TRANSFERBOUND})  for the  dyadic scales $2^{m/2} \leq h^{-1/4} |\log h|^{1/4}$ we use the  $h$-psdo bound (\ref{psdobound}) in (\ref{transferbound2}), whereas for $2^{m/2} > h^{-1/4} |\log h|^{1/4},$ the volume bound in (\ref{fiobound}) is optimal.   Thus,  from  (\ref{TRANSFERBOUND}), it follows that

\begin{equation} \label{TRANSFERGEOMBOUND}
\| N_{jk}^{\mathcal G}(h) \|_{L^2 \to L^2}  = O(1) \, \| N_{jk}(h) \|_{L^2 \to L^2}  \end{equation}
and so, from (\ref{CRUCIAL3}), 

\begin{eqnarray} \label{UPSHOTA}
\| \chi_j(h D)  (1 -\psi_{j}^{\delta}(h)) u_h  \|_{L^2(\Gamma_j)} = O(h^{-1/4-0}) + \|N_{j}^{\mathcal D}(h) u_h \|_{L^2(\partial \Omega)}. \end{eqnarray}  \

We are left with bounding the diffractive term on the RHS of (\ref{UPSHOTA}).\\

\subsubsection{ Bounding the diffractive term $\|N_{j}^{\mathcal D}(h)u_h \|$} \label{diffraction}

Here, we simply use that from (\ref{TRANSFERrelate}) and (\ref{geo/diff defn}),

$$ \| N^{\mathcal D}_j(h) u_h \|_{L^2(\Gamma_j)}  \leq \sum_{k \neq j}  \| N_{jk}(h) \|_{L^2 \to L^2}  \, \| \psi_{k}^{2\delta} u_h \|_{L^2}.$$\

Then,  by non-concentration and Sobolev restriction we again get
 that with $\delta = 1/2-0,$ the mass  $ \| \psi_{k}^{2\delta} u_h \| = O(h^{-0})$ and so, in view of (\ref{TRANSFERBOUND}), it follows that
 
 \begin{equation} \label{DIFFBOUND}
 \| N_{j}^{\mathcal D}(h) u_h \|_{L^2} = O(h^{-1/4-0}). \end{equation}\

Consequently, from (\ref{DIFFBOUND}) and  (\ref{UPSHOTA}), the end result is that

\begin{equation} \label{UPSHOT2.0}
\| \chi_j(h D)  (1 -\psi_{j}^{\delta}(h)) u_h  \|_{L^2(\Gamma_j)} = O(h^{-1/4-0}). 
\end{equation}\

 On the other hand, from the small-scale  Rellich commutator result in Lemma \ref{rellich},

\begin{equation} \label{inteqn1.1}
\| [1-\chi_j (hD) ]\, (1-\psi_j^{\delta}(h)) u_h^{j} \|_{L^2(\Gamma_j)} = O(h^{-\delta/2}) = O(h^{-1/4 -0}). \end{equation}\

 So, from (\ref{UPSHOT2.0})  and (\ref{inteqn1.1}), it follows that 

\begin{eqnarray} \label{UPSHOT2.1}
 \|  ( 1 - \psi_j^{\delta}(h)) u_h^j \|_{L^2} = O(h^{-1/4-0}). \end{eqnarray}\
 
We are left with estimating mass near corners ; that is,$ \| \psi_j^{\delta}(h) u_h^{j} \|_{L^2}.$

 \subsection{Estimates near corners}
  
  Here, as in the diffractive case above, we use non-concentration in Theorem \ref{T:non-con} together with Sobolev restriction. Recall that from the interior estimates centered
at a corner $c_{j} \in \overline{\Omega}$ in Theorem \ref{T:non-con}, we have that

\begin{equation} \label{upshot3.5}
 \| \phi_h \|_{L^2( B(c_j, h^{\delta}) )}  = O(h^{\delta/2}) = O(h^{1/4-0}), \quad \delta = 1/2-0. \end{equation}\

 The bound in (\ref{upshot3.5}) combined with $h$-Sobolev estimates give
 
 \begin{equation} \label{upshot4}
\| u_h \|_{ L^2( \{ q \in \Gamma_j; |q-c_j| \leq h^{\delta} \} )}  \lessapprox h^{-1/2} \| \phi_h \|_{L^2(B(c_j, h^{\delta}) )} \lessapprox h^{-1/2 + 1/4 - 0}. \end{equation} \

Thus, it follows from (\ref{upshot4}) that 

\begin{equation} \label{UPSHOT2.2}
\| \psi_j^{\delta}(h) u_h \|_{L^2(\Gamma_j)} = O( h^{-1/4 - 0}). \end{equation}\

 Consequently,  in the obtuse case, Theorem \ref{dirichlet}  then follows from (\ref{UPSHOT2.1}) and (\ref{UPSHOT2.2}).

\end{proof}

\subsection{Proof of Theorem \ref{dirichlet}: the general case} \label{acute}
Assume now that the angle $\alpha_j \in  (0,
\pi/2].$
The analysis here is very similar to the obtuse case, so we only indicate here the relatively minor changes.  The diffractive term $\| N_{j}^{\mathcal D}(h) u_h \|$ is estimated in the same way as in section \ref{diffraction}.   The key difference here is that for the geometric term $ \| N_{j}^{\mathcal G}(h) u_h \|, $    one uses the iterated jumps equation $u_h = N(h)^{\,2} \, u_h$ instead of just $u_h = N(h) u_h,$ since near-glancing rays to the flat edge $\Gamma_j$ get reflected twice. First, they reflect in the adjacent edge $\Gamma_k,$  back to $\Gamma_j$  and then reflect once more along the initial flat edge $\Gamma_j$ (see Figure \ref{F:Fig-2}).      By a similar analysis to   that in the obtuse case in the previous section, one gets
\begin{align} 
 \chi_j(h D) &  (1-\psi_{j}^{\delta}(h)) {\bf 1}_{\Gamma_j} u_h
 \notag \\
& =  \sum_{k=j-1}^{j+1} N_{jk}(h)  \, \zeta_k(hD)  \,(1-\psi_k^{2\delta}(h)){\bf 1}_{\Gamma_k} \, N(h)^{ 2}  \, u_h
+ O( \| N_{jk}(h) \| \cdot  \| \psi_k^{2\delta}(h)) u_k \| )  \nonumber \\ 
& \quad + O(h^{\infty}). \label{acute1} \end{align}

   \begin{figure}
\hfill
\centerline{
\begingroup%
  \makeatletter%
  \providecommand\color[2][]{%
    \errmessage{(Inkscape) Color is used for the text in Inkscape, but the package 'color.sty' is not loaded}%
    \renewcommand\color[2][]{}%
  }%
  \providecommand\transparent[1]{%
    \errmessage{(Inkscape) Transparency is used (non-zero) for the text in Inkscape, but the package 'transparent.sty' is not loaded}%
    \renewcommand\transparent[1]{}%
  }%
  \providecommand\rotatebox[2]{#2}%
  \newcommand*\fsize{\dimexpr\f@size pt\relax}%
  \newcommand*\lineheight[1]{\fontsize{\fsize}{#1\fsize}\selectfont}%
  \ifx\svgwidth\undefined%
    \setlength{\unitlength}{267.13236237bp}%
    \ifx\svgscale\undefined%
      \relax%
    \else%
      \setlength{\unitlength}{\unitlength * \real{\svgscale}}%
    \fi%
  \else%
    \setlength{\unitlength}{\svgwidth}%
  \fi%
  \global\let\svgwidth\undefined%
  \global\let\svgscale\undefined%
  \makeatother%
  \begin{picture}(1,0.64069547)%
    \lineheight{1}%
    \setlength\tabcolsep{0pt}%
    \put(0,0){\includegraphics[width=\unitlength,page=1]{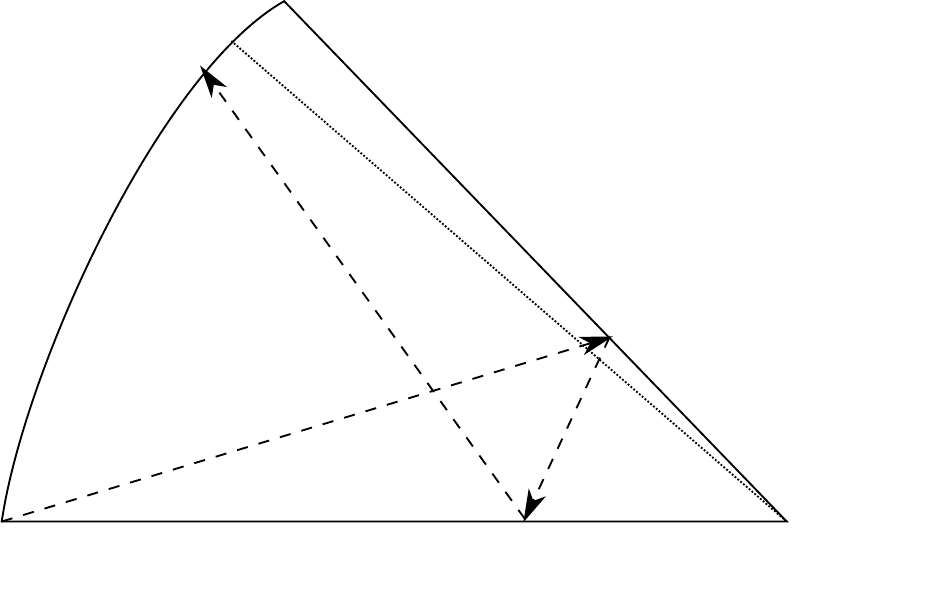}}%
    \put(-0.00221954,0.3948562){\makebox(0,0)[lt]{\lineheight{1.25}\smash{\begin{tabular}[t]{l}$\Gamma_\ell$\end{tabular}}}}%
    \put(0.64051351,0.42944484){\makebox(0,0)[lt]{\lineheight{1.25}\smash{\begin{tabular}[t]{l}$\Gamma_k$\end{tabular}}}}%
    \put(0.30509843,0.00630616){\makebox(0,0)[lt]{\lineheight{1.25}\smash{\begin{tabular}[t]{l}$\Gamma_j$\end{tabular}}}}%
    \put(0,0){\includegraphics[width=\unitlength,page=2]{Fig-2.pdf}}%
    \put(0.15015148,0.09993175){\makebox(0,0)[lt]{\lineheight{1.25}\smash{\begin{tabular}[t]{l}$\epsilon_0$\end{tabular}}}}%
    \put(0,0){\includegraphics[width=\unitlength,page=3]{Fig-2.pdf}}%
    \put(0.88506681,0.03794219){\makebox(0,0)[lt]{\lineheight{1.25}\smash{\begin{tabular}[t]{l}$\alpha$\end{tabular}}}}%
    \put(0.86542791,0.09604655){\makebox(0,0)[lt]{\lineheight{1.25}\smash{\begin{tabular}[t]{l}$c_j$\end{tabular}}}}%
    \put(0.38079554,0.41936158){\makebox(0,0)[lt]{\lineheight{1.25}\smash{\begin{tabular}[t]{l}$L_j$\end{tabular}}}}%
  \end{picture}%
\endgroup%
}
\caption{\label{F:Fig-2} The setup for acute angles.}
\hfill
\end{figure}

Just as in the previous section we consider corner cutoffs $\widetilde{\psi_k}^{2\delta}  \Subset \tilde{\psi_k}^{2\delta} \Subset \psi_k^{2\delta}$ with $\widetilde{\psi}^{2\delta}  = \sum_{k} \widetilde{\psi_k}^{2\delta} , \,\, \tilde{\psi}^{2\delta} = \sum_k \tilde{\psi_k}^{2\delta}$ and make the  decomposition 
 \begin{equation} \label{acute2}
  (1-\psi_k^{2\delta})N(h)^{2} =  (1-\psi_k^{2\delta}) N(h)  \, [  \tilde{\psi}^{2\delta}  + (1-\tilde{\psi}^{2\delta}) ]  \, N(h)  \, [ \widetilde{\psi}^{2\delta}  +    ( 1 - \widetilde{\psi}^{2\delta}  ) ]. \end{equation}
One substitutes (\ref{acute2}) in (\ref{acute1}) and each of the resulting 4 terms is bounded using the same analysis as in subsection \ref{obtuse}: near diagonal terms in the Schwartz kernels where $|q-q'| \lessapprox h^{2\delta}$  are bounded using Schur lemma.   Off-diagonal   terms with $|q-q'| \gtrapprox h^{2\delta}$ are estimated using the h-FIO representation in (\ref{inteqn2}) together with the stationary phase argument in subsection \ref{statphase}. Just as in the obtuse case, the result is that
$$ \|  \chi_j(h D)  (1-\psi_{j}^{\delta}(h)) {\bf 1}_{\Gamma_j} u_h \|_{L^2} = O(h^{-0}) \sum_{k=j-1}^{j+1} \| N_{jk}(h) \|_{L^2 \to L^2}.$$
The bounds $\| N_{jk}(h) \|_{L^2 \to L^2} = O(h^{-1/4} |\log h|^{1/4})$ follow in the same way in the previous section (they do not depend on properties of the angle $\alpha_j$) and that completes the proof of Theorem \ref{dirichlet} in the general case. \qed

\section{Estimates for transversal mass} \label{bdypsdo}


We begin with some preliminaries on $h$-pseudodifferential operators along boundary edges.

\subsection{ Pseudodifferential operators along boundary} \label{bdypsdos}
Before giving the proof, we begin with some background. Let $\Gamma_k \subset \partial \Omega$ be a boundary face (edge) with corner endpoints $c_k$ and $c_{k+1}.$ The face $\Gamma_k$ extends smoothly to an open edge $\Gamma_k' \Supset \Gamma_k$ and  we let $(x',x_n): U_k \to \R^2$ be Fermi coordinates in a tubular neighbourhood $U_k \supset \Gamma_k'$ with $\Gamma_k ' = \{ x_n = 0 \}.$  Let $\rho \in C^{\infty}_{0}(\Gamma_k')$ be a cutoff with $0 \leq \rho \leq 1$ and  $\rho(x') =1$ for $x' \in \Gamma_k.$

\begin{definition} We say that $P(h)\in \Psi_h^{m}(\Gamma_k)$ if $P(h): C^{\infty}_0(\Gamma_k') \to C^{\infty}_0(\Gamma_k')$ is a properly-supported $h$-psdo with Schwartz kernel of the form
$$P(x',y',h) = (2\pi h)^{-1} \int_{\R} e^{i(x'-y')\xi'/h} a(x'.\xi',h) \rho(x') \rho(y') \, d\xi',$$
where $a(x',\xi',h) \in S^{m,-\infty}(T^* \Gamma_k').$ Similarily, when $a \in S^{m,-\infty}_{\delta}$ we write $P(h) \in \Psi_{h,\delta}^{m}(\mathring{\Gamma_k}).$
\end{definition}

We denote the induced boundary Laplacian  on the edge $\Gamma_k$ by $\Delta_k$ where the latter extends to a differential operator $\Delta_k: C^{\infty}_{0}(\Gamma_k') \to C^{\infty}_0(\Gamma_k').$

 In view of the Neumann boundary condition, at a corner point $c_k$ we have $\partial_{\nu_k} \phi_h(c_k) = \partial_{\nu_{k-1}} \phi_h(c_k) =0.$ Since  $\partial_{\nu_k}$ and $\partial_{\nu_{k-1}}$ are linearly independent, it then follows that $c_k$ is critical for $\phi_h$ so that
\begin{equation} \label{extend}
\partial_{x'} u_h (c_k) = 0. 
\end{equation} 

Without loss of generality assume that $x'(c_k) =0$. Then, in view of
(\ref{extend}) it is clear that $u_h$ locally extends (independent of $h$)  to a function, $v_h,$   on $\Gamma_k'$  that is even with respect to the involution $x' \to - x'$ and an analogous statement holds at the other corner $c_{k+1}.$  Since in addition $v_h''(-x') = (-1)^2 v_h''(x') = v_h(x')$, it follows that   $v_h \in C^{2}_{loc}(\Gamma_k').$ Denoting  the corresponding extension by $v_h,$ we set
$$\tilde{u}_h := \rho \cdot v_h \in C^2_{0}(\Gamma_k') \cap L^2(\Gamma_k').$$

Since  the construction of $v_h$ involves two even involutions (one at each corner), by choosing $\Gamma_k'$ sufficiently small,  we can (and will)   assume that
$$ \| \tilde{u}_h \|_{L^2(\Gamma_k')} \leq 3 \| u_h \|_{L^2(\Gamma_k)}.$$

In the  proof of Theorem \ref{dirichlet} (see (\ref{inteqn1.1})), we have used transversal eigenfunction mass estimates $h^{\delta}$ close to corners. We collect the necessary results in the following:

\begin{lemma} \label{rellich} Let $\chi_{j}(hD) \in \Psi_h^{0}(\Gamma_j )$ be the h-psdo glancing cutoff defined in subsection \ref{defn} and $\psi_j^{\delta} \in C^{\infty}_0(\Gamma_j)$ be the spatial corner cutoff in (\ref{cornercutoff}). Then, for any $\delta \in [0,1/2),$ there exists constants $C_{\delta}(\Omega)>0$ and $h_0 >0$  such that for $h \in (0,h_0]$ and any $k=1,...,M,$

$$   \| (I - \chi_{j}(hD)) (1-\psi_j^{\delta})(x',h)  u_h \|_{L^2(\Gamma_j)} \leq C_{\delta}(\Omega) h^{-\delta/2}. $$\

\end{lemma}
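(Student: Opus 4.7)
I would split $I - \chi_j(hD) = \chi_E(hD) + \chi_H(hD)$, where $\chi_E,\chi_H$ are self-adjoint tangential $h$-psdos with symbols respectively supported in the elliptic region $\{|\xi'|\geq 1+\eo\}$ and the hyperbolic region $\{|\xi'|\leq 1-\eo\}$. On the support of $\chi_E(\xi')$, the full operator $-h^2\Delta - 1$ is tangentially elliptic, and a standard microlocal parametrix for the Neumann problem together with $\|\phi_h\|_{L^2(\Omega)}=1$ gives
\[
\|\chi_E(hD)(1-\psi_j^\delta)u_h\|_{L^2(\Gamma_j)} = O(h^\infty).
\]
So the main task is to control the hyperbolic piece $\|(1-\psi_j^\delta)\chi_H(hD)u_h\|_{L^2(\Gamma_j)}$.

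\textbf{Small-scale Rellich commutator.} For the hyperbolic piece I will run a Rellich commutator argument in local Fermi coordinates $(x',x_n)$ along $\Gamma_j$. Since $\delta<1/2$, the support of $(1-\psi_j^\delta)$ lies at distance $\gtrsim h^\delta$ from the corner $c_j$, well inside a smooth piece of the boundary, so these coordinates are well-defined there. Let $\tchi_H$ be a tangential frequency cutoff with $\tchi_H \equiv 1$ on $\supp\chi_H$ and still supported in $\{|\xi'|\leq 1-\eo/2\}$, and let $\tpsi(x_n)$ be a smooth collar cutoff equal to $1$ near $x_n=0$. I take the commutant
\[
A = (1-\psi_j^\delta)^2(x')\,\tchi_H(hD_{x'})\,\tpsi(x_n)\, hD_{x_n}.
\]
Using $-h^2\Delta\phi_h=\phi_h$ and $\p_\nu\phi_h|_{\p\Omega}=0$, Green's identity gives
\[
\int_\Omega([-h^2\Delta,A]\phi_h)\bphi\,dV = -h\int_{\p\Omega}(h\p_\nu(A\phi_h))\overline{u_h}\,dS.
\]
Substituting $h^2\p_n^2\phi_h|_{x_n=0} = -(1+h^2\Delta_{x'})u_h + O(h)$ from the eigenfunction equation, the boundary integrand becomes, up to $O(h)$, the expression $(1-\psi_j^\delta)^2\tchi_H(hD_{x'})(1-|hD_{x'}|^2)u_h\cdot\overline{u_h}$. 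On $\supp\chi_H$ the symbol $1-|\xi'|^2$ is bounded below by $\eo$, so by semiclassical G\aa rding,
\[
\|(1-\psi_j^\delta)\chi_H(hD)u_h\|^2_{L^2(\Gamma_j)} \lessapprox h^{-1}\Bigl|\int_\Omega[-h^2\Delta,A]\phi_h\cdot\bphi\,dV\Bigr| + O(1).
\]

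\textbf{Size of the interior commutator.} The dangerous contribution to $[-h^2\Delta, A]$ comes from the tangential piece $[-h^2\Delta_{x'}, (1-\psi_j^\delta)^2\tchi_H(hD_{x'})]\tpsi(x_n)hD_{x_n}$, whose principal symbol involves the derivative $\p_{x'}((1-\psi_j^\delta)^2) = O(h^{-\delta})$; together with the factor of $h$ from the Poisson bracket $\{|\xi'|^2,\cdot\}$ this yields a net $O(h^{1-\delta})$. All remaining terms --- from commuting with $\tpsi(x_n)$ or from lower-order symbols of compositions in $S^0_\delta$ --- are $O(h)$. Pairing against the $L^2$-normalized $\phi_h$ gives $|\langle[-h^2\Delta,A]\phi_h,\phi_h\rangle_\Omega| = O(h^{1-\delta})$, and hence $\|(1-\psi_j^\delta)\chi_H(hD)u_h\|^2_{L^2(\Gamma_j)} = O(h^{-\delta})$; taking square roots and combining with the elliptic estimate proves the lemma.

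\textbf{Main obstacle.} The principal difficulty will be the bookkeeping within the exotic class $S^0_\delta$: $(1-\psi_j^\delta)$ has derivatives of size $h^{-\delta}$, and its composition with the classical frequency cutoff $\tchi_H(hD_{x'})$ and with the differential operators $-h^2\Delta_{x'}$, $hD_{x_n}$ must remain in a symbolic calculus where $L^2$-boundedness and G\aa rding hold with uniform constants. This is precisely what forces the hypothesis $\delta<1/2$. A secondary subtlety is the collar cutoff $\tpsi(x_n)$, whose derivative is supported in a fixed interior region $\{x_n\sim c\}$ and produces interior error terms; these can be absorbed via integration by parts along the boundary together with $h$-Sobolev restriction, exactly as in \eqref{E:bdy-h-Sobolev-1}--\eqref{E:bdy-h-Sobolev-2}.
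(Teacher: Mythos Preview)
Your overall strategy—a Rellich commutator with a tangential, frequency-localized commutant, followed by sharp G\aa rding to extract the hyperbolic-region bound—is the same as the paper's. But there is a genuine gap in your choice of a \emph{fixed} (i.e., $h$-independent) normal collar cutoff $\tpsi(x_n)$. Because $(1-\psi_j^\delta)$ only keeps you at distance $\gtrsim h^\delta$ from the corner $c_j$ while $\tpsi$ has support of width $O(1)$ in $x_n$, the set $\supp\big[(1-\psi_j^\delta)^2\tpsi\big]\cap \p\Omega$ is \emph{not} contained in $\Gamma_j$: it meets the adjacent edges $\Gamma_{j\pm 1}$ along segments of length $O(1)$. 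Green's identity then produces boundary integrals on $\Gamma_{j\pm 1}$ of the form $h\int_{\Gamma_{j\pm1}}(h\p_{\nu}(A\phi_h))\,\overline{u_h}\,dS$, which you have not accounted for; a crude Sobolev estimate on these (they involve restrictions of second-order expressions in $\phi_h$, and $\tchi_H(hD_{x'})$ is not tangential to $\Gamma_{j\pm1}$) gives at best $O(1)$, not the $O(h^{1-\delta})$ you need. Your ``secondary subtlety'' remark addresses only $\supp\tpsi'$, not $\supp\tpsi$ itself.

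The paper resolves this by replacing $\tpsi(x_n)$ with a small-scale normal cutoff $\chi(h^{-\delta}x_n)$; with constants chosen compatibly one then has $\supp\big[\chi(h^{-\delta}x_n)(1-\psi_j^\delta)\big]\cap(\p\Omega\setminus\Gamma_j)=\emptyset$, so the Rellich boundary term lives only on $\Gamma_j$. The price is that the normal commutator $[(hD_n)^2,\chi(h^{-\delta}x_n)]$ now contributes $O(h^{-\delta})$ (after the factor $i/h$), which is exactly the size needed for the boundary quadratic form. The paper also invokes Theorem~\ref{T:non-con} to upgrade the tangential commutator term—supported in an $h^\delta$-\emph{ball} near the corner, not merely a strip—from $O(h^{-\delta})$ to $O(1)$, though this refinement is not needed for the final bound. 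A second, cosmetic difference: rather than splitting elliptic/hyperbolic and invoking a Neumann parametrix, the paper runs two Rellich arguments to control $\langle(1-\psi_j^\delta)(I+h^2\Delta_{\Gamma_j})^2 u_h,u_h\rangle$ and then applies sharp G\aa rding twice via the functional-calculus cutoff $\chi_+(P)$, handling both regions at once.
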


\begin{proof}

Choose  Fermi coordinates $(x',x_n): \Omega_j \to   \R^2$ in a tubular
neighbourhood $U_j$  of $\Gamma_j' \supset \Gamma_j$ as above and for any $\delta \in [0,1/2),$ we consider the  test operator   $A_{\delta}(h): C^{\infty}_0(U_j) \to C^{\infty}_0(U_j)$ given by

$$A_{\delta}(h) :=\chi(h^{-\delta} x_n) \cdot  (1-\psi_j^{\delta}(x',h)) \, hD_n.$$

 Since 

$$ \big\{ \, (x_n,x') \in \text{supp} \, \chi(h^{-\delta} \cdot) \cdot (1-\psi_j^{\delta}(\cdot,h) \, \big \}  \cap (\partial \Omega \setminus \Gamma_j) = \emptyset,$$
by the Rellich identity \cite{CTZ}, \

\begin{eqnarray} \label{rellichformula}
\frac{i}{h} \langle [ -h^2 \Delta ,  A_{\delta }(h) ] \phi_h, \phi_h \rangle_{L^2(\Omega)} =\langle (1-\psi_k^{\delta}(x',h) \,  (I + h^2 \Delta_{\Gamma_k})  u_h^k,   u_h^k \rangle_{L^2(\Gamma_k)} \nonumber \\
+ O(h^{1-\delta}) \| u_h \|_{\Gamma_k}^2
 \end{eqnarray}
provided $\partial_{\nu} \phi_h |_{\partial \Omega} = 0.$
In Fermi coordinates,
$$ -h^2 \Delta = (hD_n)^2 + R(x',x_n,hD_{x'}), \,\, R(x',0,hD_{x'}) = -h^2 \Delta_{\Gamma_k}$$
and $R(x,hD_{x'})$ is an $h$-differential operator of order two acting tangentially to the boundary.

As a result,  one can write the commutator matrix elements on the LHS of (\ref{rellichformula}) as a sum:
\begin{eqnarray} \label{comm1}
\frac{i}{h} \langle [ (hD_n)^2 ,  A_{\delta }(h) ]  \phi_h, \phi_h \rangle 
+ \frac{i}{h} \langle [ R(x,hD_{x'}),  A_{\delta }(h) ] \phi_h, \phi_h \rangle \noindent \\
= \frac{i}{h} \langle [ (hD_n)^2 ,  \chi(h^{-\delta} x_n) ]   \cdot (1-\psi_k^{\delta}(x',h) ) hD_n\phi_h, \phi_h \rangle  \nonumber \\
+ \frac{i}{h} \langle [ R(x,hD_{x'}) , (1-\psi_k^{\delta}(x',h) ) hD_n ]  \chi(h^{-\delta} x_n) \cdot \phi_h, \phi_h \rangle \end{eqnarray}

Since $hD_n \chi(h^{-\delta} x_n) = h^{-\delta} \chi'(h^{-\delta})  \in h^{-\delta} S_{\delta}^{0},$ it follows that 
$$  \frac{i}{h} \big\langle [ (hD_n)^2 ,  \chi(h^{-\delta} x_n) ]   \cdot (1-\psi_j^{\delta}(x',h) ) hD_n\phi_h, \phi_h \big\rangle   = O(h^{-\delta}).$$
Moreover, we note that the symbol of  $[ (hD_n)^2 ,  \chi(h^{-\delta} x_n) ]   \cdot (1-\psi_j^{\delta}(x',h) )$ is supported in the $h^{\delta}$ strip  where $ |x_n| \lessapprox h^{\delta}$ and, in general, non-concentration is of no use in such strips (only $h^{\delta}$ balls). 
As for the second term in the last line of (\ref{comm1}), the non-standard terms arise from hitting the cutoff $(1-\psi_j^{\delta}(x',h))$ with $D_{x'}.$ Since
$$D_{x'} (1-\psi_j^{\delta}(x',h))  = - h^{-\delta} \partial \psi_j^{\delta \,  }  (x',h) \in h^{-\delta} S^{0}_{\delta}$$
and 
$$ \text{supp} \, \partial \psi_j^{\delta}(\cdot;h) \psi(h^{-\delta} \cdot) \subset \{ (x',x_n);  |x'| \lessapprox h^{\delta}, \,\, |x_n| \lessapprox h^{\delta} \}.$$\

So, by $L^2$-boundedness,

$$ \frac{i}{h} \langle [ R(x,hD_{x'}) , (1-\psi_j^{\delta}(x',h) ) hD_n ]  \chi(h^{-\delta} x_n) \cdot \phi_h, \phi_h \rangle  = O(h^{-\delta}) \| \phi_h \|^2_{ \{ |(x',x_n) | \lessapprox h^{\delta} \} } = O(1),$$
where the final estimate follows by non-concentration (note that estimates on $h^{\delta}$ balls  are equivalent to estimates on $h^{\delta}$-cubes). All other commutator terms are $O(1)$ by standard $L^2$ results.   So,  after absorbing the error term, it then follows from Theorem \ref{T:non-con}   that

\begin{equation} \label{upshot1}
 \langle  (1-\chi_j(h^{-\delta}x')) (I+h^2 \Delta_j)  h^2 \Delta_j  u_h, u_h \rangle_{L^2(\Gamma_j)} = O(h^{-\delta}).
\end{equation}\

Running the same argument as above with the test operator

$$\tilde{A}_{\delta}(h) = (1-\chi(h^{-\delta}x')) \chi_{\epsilon}(x_n) (-h^2 \Delta_j(x',hD)) hD_{x_n}$$
gives

\begin{equation} \label{upshot2}
 \langle  (1-\chi_j(h^{-\delta}x')) (I+h^2 \Delta_j)  h^2 \Delta_j  u_h, u_h \rangle_{L^2(\Gamma_j)} = O(h^{-\delta}). \end{equation}\
 
 Consequently, setting
 $$P(x',hD) = (1-\chi_j(h^{-\delta}x')) (I+h^2 \Delta_j)^2 :C^{\infty}(\mathring{\Gamma_j}) \to C^{\infty}(\mathring{\Gamma_j}),$$
  it follows by adding (\ref{upshot1}) and (\ref{upshot2}) that
 
\begin{equation} \label{UPSHOT1}
\langle P(x',hD) u_h, u_h \rangle_{L^2(\Gamma_j)} = O(h^{-\delta}). \end{equation}\

Let $\tilde{\chi} \in C^{\infty}_0(\Gamma_j')$ with
$\tilde{\chi}|_{\Gamma_j} =1.$  Abusing notation somewhat,  we extend $P(h)$ as an operator $P(h): C^{\infty}_0(\Gamma_j') \to C^{\infty}_0(\Gamma_j')$ so that $P(h) \in \Psi^0_{h,\delta}(\Gamma_j).$ Let $\chi_+ \in C^{\infty}(\R;[0,1])$ with $ \chi_+ \geq 0$ so that $\chi_+(u) = 0$ for $u \leq 1/4$ and $\chi_+(u)= 1$ for $u \geq 1/2.$ Consider the $h$-psdo $Q(h) \in \Psi_{h,\delta}^{0)}(\Gamma_j)$ given by

$$ Q(h):= \tilde{\chi} \,  P(h)  \chi_+ (P)^* \chi_+(P) \, \tilde{\chi}.$$\

We apply sharp Garding to the operator $Q(h)$ in two ways:  First,  note   that
$$\sigma(Q) = p \, |\chi_+(p)|^2 \geq 0, \quad p(x',\xi') = |\xi'|^2_{x'} - 1$$
and  since $p |\chi_+(p)|^2 \leq p,$ sharp Garding and (\ref{UPSHOT1}) gives

\begin{equation} \label{two}
\langle \tilde{\chi}  P(h)  \chi_+ (P)^* \chi_+(P)  \tilde{\chi} \tilde{u}, \tilde{u} \rangle_{\Gamma_j'} \lessapprox \langle P(h)  u, u \rangle_{\Gamma_j}   + O(h) \| u \|_{\Gamma_j}^2= O(h^{-\delta}).
\end{equation}\

 Next,  apply sharp Garding yet again using $p  \geq 1/4 $ on supp $\chi_+(p)$ to get

\begin{equation} \label{three}
\langle \tilde{\chi}  \chi_+ (P)^* \chi_+(P) \tilde{\chi}  \tilde{u}, \tilde{u} \rangle_{\Gamma_j'}  \lessapprox  \langle \tilde{\chi}  P(h)  \chi_+ (P)^* \chi_+(P) \tilde{u}, \tilde{u} \rangle_{\Gamma_j'}  + O(h) \|u \|^2_{\Gamma_j} = O(h^{-\delta}),\end{equation}
where the last bound in (\ref{three}) follows from (\ref{two}). Finally, note by non-negativity and the bound in (\ref{two}),

\begin{equation} \label{UPSHOT2}
 \langle \chi_+(P)^* \chi_+(P) u, u \rangle_{\Gamma_j} \leq \langle \tilde{\chi}  P(h)  \chi_+ (P)^* \chi_+(P) \tilde{u}, \tilde{u} \rangle_{\Gamma_j'}  = O(h^{-\delta}). \end{equation}

Consequently from (\ref{UPSHOT2}), by taking square roots,

\begin{equation} \label{guts2}
\| (1-\chi_j(h^{-\delta} x') \cdot (I-\chi_j^{\delta}(hD)) u_h \|_{L^2(\Gamma_j)} = O(h^{-\delta/2}) \end{equation}
and that finishes the proof of  Lemma \ref{rellich}. \end{proof}





\end{document}